\DeclareMathOperator*{\argmin}{arg\,min}
\newtheorem{theorem}{Theorem}[section]
\newtheorem{prop}[theorem]{Proposition}
\newtheorem{corollary}[theorem]{Corollary}
\newtheorem{lemma}[theorem]{Lemma} 
\newtheorem{definition}[theorem]{Definition} 
\newtheorem{exmp}[theorem]{Example}
\newtheorem{remark}[theorem]{Remark} 
\newcommand{\A}{\mathfrak{M}}
\newcommand{\M}{\mathfrak{M}}
\newcommand{\V}{\mathfrak{V}}
\newcommand{\C}{\mathbb{C}}
\newcommand{\R}{\mathbb{R}}
\newcommand{\N}{\mathbb{N}}
\newcommand{\T}{\mathbb{T}}
\renewcommand{\S}{\mathcal{S}}
\DeclareFontFamily{U}{mathx}{}
\DeclareFontShape{U}{mathx}{m}{n}{<-> mathx10}{}
\DeclareSymbolFont{mathx}{U}{mathx}{m}{n}
\DeclareMathAccent{\widehat}{0}{mathx}{"70}
\DeclareMathAccent{\widecheck}{0}{mathx}{"71}
\newcommand{\w}{\gamma}
\renewcommand{\r}{\rho}
\title[On Quantum Large Sieve Inequalities]{On Quantum Large Sieve inequalities and  operator recovery from incomplete information}
\author{Lu\'is Daniel Abreu}
\address{Lu\'is Daniel Abreu, Faculty of Mathematics \\
	University of Vienna \\
	Oskar-Morgenstern-Platz 1 \\
	1090 Vienna, Austria}
    \email{abreuluisdaniel@gmail.com}
\author{Michael Speckbacher}
\address{Michael Speckbacher,	Acoustics Research Institute\\ Austrian Academy of Sciences\\ Dominikanerbastei 16, 1010 Vienna,  Austria}
\email{michael.speckbacher@oeaw.ac.at}
\author{Erling Svela}
\address{Erling Svela, Department of Mathematical Sciences, Norwegian University of Science and Technology, 7491 Trondheim,
Norway}
\email{erling.a.t.svela@ntnu.no}
\date{}
\begin{document}

\begin{abstract}
    We obtain large sieve type inequalities for the Rayleigh quotient of the
restriction of phase space representations of higher rank operators, via an
operator analogue of the short-time Fourier transform (STFT). The resulting
bounds are referred to as \emph{quantum large sieve inequalities}. Building on the foundational work of Donoho and Stark, we demonstrate that these inequalities guarantee the recovery of an operator whose phase-space information is missing or unobservable over a 'measure-sparse' region $\Omega $, by
solving an $L^{1}$-minimization program.
This is an operator version of what is commonly known as
`Logan's phenomenon'. 
Moreover, our results can be viewed as a deterministic, continuous variable operator version
of `low-rank' 
matrix recovery, which itself can be regarded as a (finite rank) operator version of compressive
sensing. Our results depend on an abstract large sieve principle for
operators with integrable STFT and on a non-commutative analogue of the local
reproducing formula in rotationally invariant domains (first stated by Seip for the Fock space of entire
functions). As an application, we obtain concentration estimates for Cohen's class distributions and the Husimi function. We motivate the paper by comparing  with 
Nicola and Tilli's Faber-Krahn inequality for the STFT, illustrating that norm bounds on a
domain $\Omega $, obtained by large sieve methods, introduce a \emph{trade-off}
between \emph{sparsity} and \emph{concentration }properties of $\Omega $: If 
$\Omega $ is sparse, the large sieve bound may significantly improve known
operator norm bounds, while if $\Omega $ is concentrated, it produces worse
bounds.
\end{abstract}
\subjclass[2020]{46E40, 81S30, 47B32, 47B10, 46E22, 11N36}
\keywords{Large sieve inequalities, operator-valued short-time Fourier transform, concentration estimates, quantum state tomography}

\maketitle

\section{Introduction}

While signal analysis typically deals with vectors and functions, quantum
physics often requires matrices or density operators of \emph{higher rank} to represent  \emph{mixed states}. Since
any normalized function $f$ can be interpreted as a \emph{rank-one} density operator via the mapping $f\mapsto f\otimes f$, signal analysis of functions can be regarded 
 as a particular instance of signal analysis of quantum states. 
 Phase space representations are extremely useful tools for analyzing functions and operators. Recent developments have led to new phase space representations for operators whose frequency properties change with time ~\cite{OpSTFT}.

The purpose of this paper is to obtain
large sieve type inequalities for the Rayleigh quotient of the restriction of  phase space representations of higher
rank  operators. The resulting bounds, which will be referred to as \emph{quantum large sieve inequalities}, provide guarantees for an operator-level analogue of Logan's phenomenon \cite{Logan,DonohoLogan,DonohoLogan}: the possibility of
recovering an operator whose phase space values
are missing or impossible to observe in a `measure-sparse' region $\Omega $, by solving an $L^{1}$-minimization program in the complementary region $\Omega ^{c}$.

The large sieve principle has its origins in number theory \cite{montgomery}%
, where it provides asymptotic averages of arithmetic functions on integers
constrained by congruences modulo sets of primes. It has been used in a
number of different mathematical fields. In particular, it found important
applications in signal analytic settings, first for  band-limited
functions \cite{DonohoLogan} and, more recently, for the short-time Fourier \cite{LargeSieve} and wavelet transforms \cite{ls-wavelets}, and band-limited spherical harmonics expansions \cite{sphere}. The large sieve
inequalities considered in \cite{DonohoLogan,LargeSieve,ls-wavelets,sphere}
consist of norm bounds of localization operators on a domain $\Omega $,
which introduce a \emph{trade-off} between \emph{sparsity} and \emph{%
concentration }properties of $\Omega $. If $\Omega $ is sparse, the large
sieve bound may significantly improve  known operator norm bounds, while if $\Omega $ is
concentrated, it produces worse bounds. Before presenting our results in the operator setting, we illustrate the key principles of our approach in Section~\ref{subsec:intro1} by revisiting Nicola and Tilli's `Faber-Krahn inequality' for the STFT \cite{faber-krahn}. While this inequality is optimal for arbitrary sets, we demonstrate that it can be significantly sharpened for sets whose Lebesgue measure is `well-spread' across the time-frequency plane.

To put the ideas of the previous paragraph on a firm mathematical foundation, we first present several definitions and illustrative examples. 
First, we define the concept of $R$\emph{-measure} (or \emph{maximum Nyquist density}) of a Euclidean set $\Omega
\subset \mathbb{R}^{2d}$ \cite{LargeSieve}:
\begin{equation*}
\nu (\Omega ,R)=\sup_{z\in \mathbb{R}^{2d}}|\Omega \cap D_R(z)|\text{.}
\end{equation*}
The $R$\emph{-measure} is close to the Lebesgue measure of the disk for a `well-concentrated' set $\Omega $, but it can be much smaller for a `spread-out' $\Omega $. Examples to support this claim can be easily constructed; see, e.g., the set \eqref{Rsparse}. Clearly, for a fixed $R>0$, and  any given $\epsilon,\mu >0$, one can always construct a
 set $\Omega$  with $|\Omega|=\mu $ and $\nu (\Omega,R)<\epsilon $. We will loosely refer to `$R$\emph{-sparse}' sets, as those for which  $\nu
(\Omega ,R)/\pi R^2$ is small, but we will not specify any threshold for this
quantity, since it does not enter our results directly.

Let $f,g\in L^{2}(\mathbb{R}^{d})$ and $g\neq 0$. For $z=(x,\omega )\in \mathbb{R}^{2d}$, the time-frequency shift of $g$ is $\pi (z)g(t)=g(t-x)e^{2\pi
i \omega \cdot t }$. The \emph{short-time Fourier transform (STFT)} of
$f$  with window  $g$ is defined as 
\begin{equation}
V_{g}f(z)=\langle f,\pi (z)g\rangle  \text{.
}  \label{STFT}
\end{equation}
In the terminology of signal analysis and physics, the operator $V_{g}$ transforms a function $f(t)$, defined over time (in signal analysis) or position (in physics) with $t\in \mathbb{R}^{d}$,  into a function $V_{g}f(z)$, where $z=(x,\omega )\in 
\mathbb{R}^{2d}$  represents the joint \emph{time-frequency} domain in signal analysis or the \emph{phase space} in physics.
In \cite{LargeSieve}, large sieve
inequalities were obtained for the STFT by estimating the Rayleigh
quotient of the Daubechies-type localization operator \cite{Daubechies,faber-krahn}, 
\begin{equation}
\Phi _{\Omega ,g}^{(p)}(f) =\frac{\int_{\Omega }|V_{g}f(z)|^{p}\,dz}{\int_{%
\mathbb{R}^{2d}}|V_{g}f(z)|^{p}\,dz },  \label{Rayleigh}
\end{equation}%
in terms of $\nu (\Omega ,R)$. 
In addition to the case $p=1$, estimates for the concentration problem with $p=2$ are also of particular interest, as they provide eigenvalue estimates for the Daubechies localization operator \(A_{\Omega}^g\) \cite{Daubechies,de2002uniform, CorderoGrochenig}, defined weakly as \begin{align*}
    A_{\Omega}^gf=\int_\Omega V_gf(z)\pi(z)g \, dz.
\end{align*} By the Courant-Fischer theorem, the value of the supremum of \eqref{Rayleigh} is the largest eigenvalue of \(A_\Omega^g\), a well-known connection which has been exploited with great success in the past; see, for instance, \cite{AccSpec,faber-krahn}.

In order to study an operator analogue of the Rayleigh quotient in~\eqref{Rayleigh} we need a way to represent an operator on phase space. We will use the operator STFT first studied in~\cite{OpSTFT}. 
Given a `window operator' $\w\in \S^{2}$ (where $\S^2$ denotes the space of Hilbert-Schmidt
operators), the operator STFT of $\r \in \S^{2}$ is defined as%
\begin{equation*}
\mathfrak{V}_{\w} \r (z):=\w^{\ast }\pi (z)^{\ast }\r
,\qquad z\in \mathbb{R}^{2d}\text{.}
\end{equation*}%
This transform preserves several of the convenient properties of the
classical STFT \eqref{STFT}.
We will provide some information in Section~\ref{subsec:intro2} and a more detailed exposition in Section~\ref{sec:prel}. The main  goal of this paper is to obtain large sieve
inequalities for the following natural operator analogue of the Rayleigh
quotient \eqref{Rayleigh}, 
\begin{equation}
 {\Phi }_{\Omega ,\w}^{(p)}(\r )=\frac{\int_{\Omega
}\left\Vert \mathfrak{V}_{\w}\r(z)\right\Vert _{\S^{2}}^{p}dz}{%
\int_{\mathbb{R}^{2d}}\left\Vert \mathfrak{V}_{\w}\r
(z)\right\Vert _{\S^{2}}^pdz}\text{,}  \label{RayleighOp}
\end{equation}%
in terms of $\nu (\Omega ,R)$. 
Let $\A^1\subset \S^2$ be the space of operators whose operator STFT has finite $L^1(\R^{2d},\S^2)$ norm, first introduced in ~\cite{OpSTFT}. As for the classical STFT case, we will see that once the condition $%
\sup_{\rho\in\A^1} {\Phi }_{\Omega ,\w}^{(1)}(\r )<\frac{1}{2}$ is met  - 
 something that our inequalities assure if $\nu(\Omega,R)/\pi R^2$ is small enough - then $L^1$-minimization is guaranteed to exactly recover  $\mathfrak{V}_{%
\w}\r (z)$ for all $z\in \Omega $, even when the values of $\mathfrak{V}_{\w}\r (z)$ over the region $\Omega $ are missing and only those in the complement  
 $\Omega ^{c}$ are observed. Precisely, we have: 

\begin{equation*}
\rho =\argmin_{\sigma \in \mathfrak{M}^{1}}\left\Vert \V_\w\sigma \right\Vert
_{L^{1}(\mathbb{R}^{2d},\S^{2})},\qquad \text{ subject to }\mathfrak{V}%
_{\gamma }\sigma |_{\Omega ^{c}}=\mathfrak{V}_{\gamma }\rho |_{\Omega ^{c}}.
\end{equation*}
 More general recovery scenarios, involving the presence of noise, can be
considered. They will be
gathered in Section~\ref{sec:recovery}.
\medskip

The paper is structured as follows: This introduction is extended by two
subsections. In Section~\ref{subsec:intro1}, we revisit the large sieve for the (rank-one) STFT, and compare it to 
 the   sharp Faber-Krahn 
inequality \cite{faber-krahn} for different levels of $R$-sparsity of the concentration domain. This is mostly intended to
provide intuition for Section~\ref{subsec:intro2}, where we present our main results
regarding the estimation of \eqref{RayleighOp}. In most cases we present a particular form of our
results in $d=1$, to facilitate a first reading.  Section~\ref{sec:prel} contains  the required
background. In Section~\ref{sec:S^2=L^2} we study the best concentration problem in the case \(p=2\) and show that the solutions on \(\S^2\) coincide with those on \(L^2\). The proof of the operator large sieve is given in Section~\ref{sec:large-sieve}. Section~\ref{sec:estimating-theta} provides the proof of a novel kind of local reproducing formula, which holds for polyradial operators of higher rank. This formula is then combined with explicit special function estimates, in order to obtain fully explicit large sieve inequalities. Then, Section~\ref{sec:applications} considers two applications of the large sieve principle, first to the Husimi function, and then for distributions in the Cohen class. Finally, in
Section~\ref{sec:recovery},  we show how `Logan's phenomenon' and other $L^{1}$
-minimization reconstruction scenarios, can be adapted to the operator setting   \cite{DonohoLogan}.

\subsection{Large sieve inequalities for the STFT}\label{subsec:intro1}  
To simplify the presentation and to enable direct comparison with \cite{faber-krahn} we will fix   the 1-dimensional Gaussian window $g(t)=h_{0}(t)=2^{1/4}e^{-\pi t^{2}},\ t\in\R,$ in \eqref{STFT} throughout this section.  With this choice, we can identify the image of $L^{2}(\mathbb{R})$ under the STFT with the Fock
space of entire functions. Exploiting this structure, Nicola and Tilli  
\cite{faber-krahn} established   a \emph{sharp} estimate for the Rayleigh quotient \eqref{Rayleigh}: 
\begin{equation}
\Phi _{\Omega ,h_{0}}^{(p)}(f)\leq 1-e^{-p|\Omega |/2},\qquad \Omega\subset \R^2 \text{.}  \label{Faber-Krahn}
\end{equation}
This is a special case of the results in \cite{faber-krahn}, which confirmed a
conjecture of two of us in \cite[Conjecture~1]{LargeSieve}. Estimates for the Rayleigh quotient are of particular interest for signal recovery methods when $p=1$.
Assume, for example, that the \emph{certificate condition}
\begin{equation}
\sup_{f\in M^{1}(\R)}\Phi _{\Omega ,h_{0}}^{(1)}(f)<1/2  \label{1/2}
\end{equation}
is satisfied  (where $ M^{1}(\mathbb{R})$ denotes the modulation space \cite{feichtinger1983modulation}
of functions with $
L^{1}$-integrable STFT, also known as the Feichtinger algebra \cite{Fei}). Then one may perfectly recover $V_{h_0}f$  on the whole phase space from only observing 
 $V_{h_0}f(z)$ \emph{outside} $ \Omega 
$. In particular, we have
\begin{equation}
f=\argmin_{_{h\in M^{1}(\R)}}\Vert V_{h_0}h\Vert _{1}\text{,
\ \ \ \ \ subject to }V_{h_0}h|_{\Omega ^{c}}=V
_{h_0}f|_{\Omega ^{c}}\text{,}  \label{l1}
\end{equation}
 In the setting
of bandlimited functions \cite{Logan},\cite[Theorem~9]{DonohoStark}, this
remarkable property is known as `Logan's phenomenon', an embryonic idea of
the powerful signal analytic theory which became known as  compressive sensing \cite{CompressedBook,CandesRomb,SparsityTF}, extended to discrete density operators in \cite{Gross,GrossIEEE} and to time-dependent operators in \cite{schreiber2025tomography}. Logan's phenomenon has recently been  revisited in \cite{LargeSieve, ls-wavelets, husain2024concentration} and combined with restriction theory in \cite{iosevich2025uncertainty}.

The condition \eqref{1/2} is difficult to assure in general. Even the sharp result \eqref{Faber-Krahn} requires the restrictive condition $|\Omega |<2\log 2$. But
large sieve methods can produce better bounds than \eqref{Faber-Krahn} for $R$%
-sparse sets. The following observation, contained in \cite{LargeSieve}, is
the rank-one case of \eqref{eq:ab-ls} in the next section.
Let $\Omega \subset \mathbb{R}^{2}$, $g\in M^{1}(\R)$ and set%
\begin{equation*}
I_{R}(V_{h_0}f)(z):=\int_{D_R(z)}V_{h_0}f(w)\langle \pi(w)h_0,\pi(z)h_0\rangle dw,
\end{equation*}%
where $D_R(z)$ denotes the disk of radius $R$ and center $z\in\R^2$.
Then the
following inequality holds 
\begin{equation}
\Phi_{\Omega,h_0}^{(1)}(f)\leq \sup_{f\in M^{1}(\R)}\frac{\left\Vert V%
_{h_0}f\right\Vert _{1}}{\left\Vert I_{R}(V_{h_0}f)\right\Vert _{1}}%
\cdot \sup_{z\in \mathbb{R}^{2}}\int_{\Omega \cap D_R(z)}|V%
_{h_0}h_0(z-w)|dw\text{.}  \label{L_S}
\end{equation}%
The \emph{local reproducing
formula} (see \cite{Seip0} for the Gaussian and \cite{LargeSieve} for the extension to
general Hermite windows)
\begin{equation}\label{eq:loc-rep-gauss}
V_{h_{0}}f(z)=\frac{1}{1-e^{-\pi R^{2}}}\int_{D_R(z)}V%
_{h_{0}}f(w)\langle \pi(w)h_0,\pi(z)h_0\rangle dw,\qquad R>0,  
\end{equation}%
gives $\big(1-e^{-\pi R^{2}}\big)^{-1}$ for the first factor in \eqref{L_S}.\ Since $%
|V_{h_{0}}h_{0}(z-w)|=e^{-\pi |z-w|^{2}/2}$, a change of variables
in \eqref{Faber-Krahn} leads to a sharp estimate for the second factor in \eqref{L_S}:
\begin{equation}
\sup_{z\in \mathbb{R}^{2}}\int_{\Omega \cap D_R(z)}|V%
_{h_{0}}h_{0}(z-w)|dw\leq 2(1-e^{-\nu (\Omega ,R)/2})\text{.}  \label{FKOp}
\end{equation}%
This results in the following large sieve inequality, which seems to have been
hitherto unnoticed: 
\begin{equation}
\Phi _{\Omega ,h_{0}}^{(1)}(f)\leq \frac{2(1-e^{-\nu (\Omega ,R)/2})}{1-e^{-\pi
R^{2}}}\text{.}  \label{R-FK}
\end{equation}
For $R$-concentrated sets (i.e., $|\Omega |\approx \nu (\Omega
,R)$), \eqref{R-FK} is worse than \eqref{Faber-Krahn}, but for $R$-sparse
sets, \eqref{R-FK} can provide a significant  improvement upon \eqref{Faber-Krahn}. For example, if $
\Omega =D(0,R)$, then \eqref{R-FK} gives $\Phi _{\Omega ,h_{0}}^{(1)}(f)\leq 2$,
while, from \eqref{Faber-Krahn} we get $\Phi _{\Omega ,h_{0}}^{(1)}(f)\leq
1-e^{-\pi R^{2}/2}$. In contrast, for 
\begin{equation}
\Omega = \bigcup\limits_{k=0}^{N-1}D\left( 2kR,\frac{R}{2}
\right) \text{,}  \label{Rsparse}
\end{equation}
we have $\nu (\Omega ,R)=\pi R^{2}/4$, while $|\Omega |=N\pi R^{2}/4$ (which
includes the case $N=\infty $). In this case, \eqref{Faber-Krahn} gives $\Phi _{\Omega
,h_{0}}^{(1)}(f)\leq 1-e^{-N\pi R^{2}/8}$, requiring $NR^{2}<\frac{8\log 2}{
\pi }$\ to ensure the certificate condition \eqref{1/2}. On the other hand, \eqref{R-FK}
leads to
\begin{equation*}
\Phi _{\Omega,h_{0}}^{(1)}(f)\leq \frac{2(1-e^{-\pi R^{2}/8})}{1-e^{-\pi
R^{2}}}\text{,}
\end{equation*}
which is less than $1/2$ for $R<1/9$ and arbitrary $N$ (even when $N=\infty $ and $
|\Omega |=\infty $). 
It is actually easy to construct domains for which the Faber-Krahn bound is arbitrarily close to one while the large sieve bound is arbitrarily close to zero. 

\subsection{A large sieve inequality for the STFT of density operators}\label{subsec:intro2}

We will now describe
the main results of the paper, as well as some of the challenges faced if one aims to extend the observation of the previous section to density
operators.
As a first step, this requires extending the STFT from rank-one operators (functions) to higher rank operators. Our approach relies on an idea of Skrettingland~\cite{EqNorms}, which was considered in full generality in~\cite{OpSTFT}, leading to the introduction of the  \emph{operator STFT}. In this extension
the phase space is $L^2(\R^{2d}, \S^2)$ with the
Bochner inner product
$$\langle F,G\rangle_{L^2(\R^{2d},\S^2)} =\int_{\R^{2d}}\langle F(z),G(z)\rangle_{\S^2}dz.$$
  To gain some intuition on the definition of the operator STFT, let us recall the original idea of Skrettingland. If we let $ \w = g \otimes e$ and $\r= f \otimes  e$, $e, f, g \in L^2(\R^d)$ with $\|e\|_2=1$, then the
classical STFT can be found in the rank-one operator
$
\w^\ast \pi(z)^\ast \r = V_gf (z)\,  (e\,  \otimes\, e).$
In ~\cite{OpSTFT}, motivated by this, Dörfler, Luef, McNulty and Skrettingland  defined the operator STFT as
$$\V_\w \r(z) := \w^\ast \pi(z)^\ast \r,\qquad \w,\r\in\S^2,\ z \in\R^{2d}.
$$ The operator STFT can be seen as a non-commutative extension (for the algebra of operators instead of functions) of the classical  STFT and shares many properties with the classical STFT, which, as shown above, is recovered in the rank-one case. The range of the operator STFT space $  \V_\w (\S^2) \subset L^2( \R^{2d}, \S^2)$
is a reproducing kernel Hilbert space with  kernel
$K_\w(z, w) =\|\w\|_{\S^2}^{-2}\, \w^\ast\pi(z)^\ast\pi(w)\w$, and 
if  $\w$ is normalized in $S^2$, then it follows that $\V_\w : \S^2 \to  \V_\w (\S^2)$ is an isometry, see~\cite{OpSTFT} for details. Following~\cite{OpSTFT}, we denote
by $\mathfrak{M}^p$ the space of operators $\r$ with  $\V_\w \r\in L^p(\R^{2d}, \S^2)$.

The operator STFT encompasses a wide range of well-known time-frequency distributions. Beyond the classical STFT, recovered when both 
$
\w$ and 
 $\r$ are rank-one operators, familiar objects also arise when only one of them is assumed to be rank-one. For instance, if 
\(\r=f\otimes e\), then the pointwise Hilbert-Schmidt norm satisfies
$$\|\V_\w(f\otimes e)\|_{\S^2}^2=Q_{\w\w^*}f(z),$$ which is a distribution in Cohen’s class \cite[Section~7]{Bible2}. Similarly, if   \(\w=h_0\otimes h_0\) and \(\r\) is a general density operator, then $$ \big\|\V_{h_0\otimes h_0}\sqrt{\r}\big\|_{\S^2}^2=\langle \r\pi(z)h_0,\pi(z)h_0\rangle,$$ which corresponds to the Husimi function of the quantum state \(\r\) \cite{Husimi1}. Consequently, our main results can be interpreted in terms of familiar phase space representations. As we will see in Section~\ref{sec:applications}, the special structure of these two representations simplifies the analysis of the involved constants in the large sieve principle. 

 Similarly to the classical STFT, in order to establish a large sieve estimate we study the STFT concentration problem over \(\Omega\):\begin{align}\label{eq:OpSTFTconc}
  {\Phi }_{\Omega ,\w}^{(p)}(\r ) \leq  \sup_{\r\in \M^p}\frac{\int_{\Omega}\|\V_\w \r(z)\|_{\S^2}^pdz}{\int_{\R^{2d}}\|\V_\w\r(z)\|_{\S^2}^pdz}.
\end{align}
Before considering the case $p=1$, let us note that just like for the classical STFT, there is a connection between the concentration problem and the spectrum of a special type of operator, namely the (operator-valued) \textit{mixed-state localization operator \(A_\Omega^{\w\w^\ast}\)}~\cite{Bible2,OpSTFT}, defined as \begin{align*}
    A_\Omega^{\w\w^\ast}\r=\int_{\Omega}\pi(z)\w\V_\w\r(z)\,dz.
\end{align*} The rank-one case \(\r=f\otimes e\) recovers the function-valued mixed-state localization operator \begin{align*}
     A_\Omega^{\w\w^\ast}(f\otimes e)=\left(\chi_\Omega\star \w\w^*\right)\left(f\right)\otimes e=\left(A_{\Omega}^{\w\w^\ast}  f\right)\otimes e,
\end{align*} which is known to be related to the best concentration problem for Cohen's class \cite[Proposition~8.2]{Bible2}. Thus, we can think of the concentration problem \eqref{eq:OpSTFTconc} as a generalization of the concentration problem \begin{align*}
    \sup_{f\in L^2(\R^d)}\frac{\int_{\Omega}\|\V_\w f(z)\|_{\S^2}^2dz}{\int_{\R^{2d}}\|\V_\w f(z)\|_{\S^2}^2dz}=\sup_{f\in L^2(\R^d)}\frac{\int_{\Omega}Q_{\w\w^*}f(z)dz}{\int_{\R^{2d}}Q_{\w\w^*}f(z)dz}.
\end{align*} 
While the problem in \(L^2(\R^d)\) might seem more restrictive than the \(\M^p\) one, we will show that for \(p=2\), the optimal values of the two Rayleigh quotients actually coincide. This will be the topic of Section~\ref{sec:S^2=L^2}, where it will be shown that
\begin{equation*}
    \sup_{\r\in\S^2} \frac{\int_\Omega \|\V_\w\r(z)\|_{\S^2}^2dz}{\|\r\|_{\S^2}^2}=\sup_{f\in L^2(\R^d)} \frac{\int_\Omega \|\V_\w f(z)\|_{2}^2dz}{\|f\|_{2}^2}.
\end{equation*}
In Section~\ref{sec:large-sieve}, we  prove an abstract large sieve type  result of the form \eqref{eq:ab-ls} (stated in full generality in Proposition~\ref{prop:abstract-LS}). If we define the integral operator
   \begin{equation*}
I_\mathcal{K}(\V_\w\r)(z):= \int_{\R^{2d}}\mathcal{K}(z, ,w)\V_\w\r(w)dw 
    \end{equation*}
    for some adequately chosen kernel $\mathcal{K}(z,w)\in\S^2$, then for $1\le p<\infty$ 
    \begin{align}\label{eq:ab-ls}
      \Phi_{\Omega,\w}^{(p)}(\r)
      \leq \theta_\mathcal{K}\cdot\sup_{w\in \R^{2d}}\int_\Omega \left\|\mathcal{K}(z,w)\right\|_{\mathrm{op}}dz,
    \end{align} 
    with 
    $$
\theta_\mathcal{K}:=\sup_{\r\in\A^1}\left(\frac{\|\V_\w\r\|_{L^1(\R^{2d},\S^2)}}{\|I_\mathcal{K}(\V_\w \r)\|_{L^1(\R^{2d},\S^2)}}\right).
    $$
In Section~\ref{sec:estimating-theta}, we then provide explicit bounds for $\theta_\mathcal{K}$ for different choices of $\w$ and $\mathcal{K}$.
If, for example, we choose $d=1$ and $\w$ a finite rank polyradial operator, 
\begin{equation}\label{eq:finite-polyradial} \w =
\sum_{n=0}^N
\lambda_n(h_n \otimes h_n),
\end{equation}
(with $h_n$ denoting the $n$-th Hermite function)
then we obtain    the following fully explicit operator large sieve estimate for the Rayleigh quotient of the operator-valued mixed-state localization operator. The operator STFT  with the above windows shares some structural features with the   polyanalytic Bargmann transform  for vector-valued functions introduced in \cite{abreu2010sampling}, but its non commutativity poses additional challenges in the analysis. The non-analytic nature of the resulting spaces, together with the absence of an exact analogue of Seip's formula \eqref{eq:loc-rep-gauss}, considerably obstructs the adaptation of the methods described in Section  \ref{subsec:intro1}, which strongly depend on the properties of entire functions used in \cite{faber-krahn} and on the local reproducing formula. Our efforts to circumvent these obstructions led to the following.

\begin{theorem}\label{thm:1} Let $1\le p<\infty,$ $\w \in \mathfrak{M}^1$ be defined as \eqref{eq:finite-polyradial}, such that $\|\w\|_{\S^2} = 1 $, and let $\Omega\subset \R^{2}$ be
measurable.  If $\pi R^2=\alpha N$ for $\alpha\ge 5$, then  for every  $\r \in \mathfrak{M}^p$ it holds
$$
\Phi_{\Omega,\w}^{(p)}(\r)
\leq
\frac{\nu (\Omega ,R)}{1- \alpha^{2N}e^{N(2-\alpha)-\log(2)}}.
$$
\end{theorem}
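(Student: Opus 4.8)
The plan is to combine the abstract operator large sieve of Proposition~\ref{prop:abstract-LS} with a local reproducing formula tailored to the polyradial window $\w$. I would apply \eqref{eq:ab-ls} to the \emph{truncated reproducing kernel}
\[
\mathcal{K}(z,w):=K_\w(z,w)\,\chi_{D_R(z)}(w)=\w^\ast\pi(z)^\ast\pi(w)\w\,\chi_{D_R(z)}(w),
\]
so that $I_\mathcal{K}(\V_\w\r)(z)=\int_{D_R(z)}K_\w(z,w)\V_\w\r(w)\,dw$. The geometric factor is then immediate: since $\pi(z),\pi(w)$ are unitary and $\|\w\|_{\mathrm{op}}\le\|\w\|_{\S^2}=1$, we have $\|\mathcal{K}(z,w)\|_{\mathrm{op}}\le\chi_{D_R(z)}(w)=\chi_{D_R(w)}(z)$, hence
\[
\sup_{w\in\R^{2}}\int_\Omega\|\mathcal{K}(z,w)\|_{\mathrm{op}}\,dz\le\sup_{w\in\R^{2}}|\Omega\cap D_R(w)|=\nu(\Omega,R),
\]
which contributes exactly the numerator. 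It remains to bound $\theta_\mathcal{K}$ by $\bigl(1-\alpha^{2N}e^{N(2-\alpha)-\log 2}\bigr)^{-1}$.

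For the local reproducing formula (the higher-rank analogue of~\eqref{eq:loc-rep-gauss}), substituting $w=z+v$ and using the covariance $\pi(z+v)=c(z,v)\,\pi(z)\pi(v)$ with $|c(z,v)|=1$, I would rewrite
\[
I_\mathcal{K}(\V_\w\r)(z)=\w^\ast S_R\,\pi(z)^\ast\r,\qquad S_R:=\int_{D_R(0)}\pi(v)(\w\w^\ast)\pi(v)^\ast\,dv.
\]
The key point is that $\w$ and $S_R$ are simultaneously diagonalised in the Hermite basis: $\w$ is polyradial by hypothesis, $\w\w^\ast$ is polyradial, and rotation invariance of $D_R(0)$ makes $A\mapsto\int_{D_R(0)}\pi(v)A\pi(v)^\ast\,dv$ commute with the metaplectic rotations, so $S_R=\sum_{n\ge0}s_n(h_n\otimes h_n)$. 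With $B:=\sum_{n=0}^N s_n(h_n\otimes h_n)$ one gets $\w^\ast S_R=B\w^\ast$, i.e. $I_\mathcal{K}(\V_\w\r)(z)=B\cdot\V_\w\r(z)$. Since $\V_\w\r(z)=\w^\ast\pi(z)^\ast\r$ has range inside $\operatorname{span}\{h_0,\dots,h_N\}$, where $B\ge(\min_{0\le n\le N}s_n)\mathrm{Id}$, the inequality $\|B\psi\|_{\S^2}^2=\operatorname{tr}(\psi^\ast B^2\psi)\ge(\min_n s_n)^2\|\psi\|_{\S^2}^2$ gives $\|I_\mathcal{K}(\V_\w\r)(z)\|_{\S^2}\ge(1-\delta_{R,N})\|\V_\w\r(z)\|_{\S^2}$ pointwise, hence $\theta_\mathcal{K}\le(1-\delta_{R,N})^{-1}$. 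Using $\int_{\R^2}\pi(v)A\pi(v)^\ast\,dv=\operatorname{tr}(A)\mathrm{Id}$ and Moyal's identity one finds $1-s_n=\sum_{m=0}^N\lambda_m^2\int_{|v|>R}|V_{h_m}h_n(v)|^2\,dv$, so
\[
\delta_{R,N}:=\max_{0\le n\le N}(1-s_n)\le\max_{0\le m,n\le N}\int_{|v|>R}|V_{h_m}h_n(v)|^2\,dv.
\]

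The hard part is the remaining fully explicit bound $\delta_{R,N}\le\alpha^{2N}e^{N(2-\alpha)-\log 2}$ for $\pi R^2=\alpha N$, $\alpha\ge5$. Passing to polar coordinates with $s=\pi|v|^2$ turns each integral into a Laguerre tail $\tfrac{\min(m,n)!}{\max(m,n)!}\int_{\pi R^2}^\infty s^{|m-n|}\bigl[L_{\min(m,n)}^{(|m-n|)}(s)\bigr]^2 e^{-s}\,ds$, of total mass $1$. Because $\min(m,n)+|m-n|=\max(m,n)\le N$, the largest zero of the relevant Laguerre polynomial stays below $4N+2\le\alpha N$, so on the range of integration one may invoke a majorant of the form $|L_k^{(\beta)}(s)|\le s^k/k!$; this reduces the tail to an incomplete Gamma integral, where $\alpha\ge5$ places $\alpha N$ comfortably above $m+n\le2N$ and yields $\Gamma(m+n+1,\alpha N)\le\tfrac53(\alpha N)^{m+n}e^{-\alpha N}$. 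Combining with Stirling ($N^N/N!\le e^N/\sqrt{2\pi N}$) and monotonicity of $N^m/m!$ for $m\le N$ collapses everything to $\tfrac{5}{6\pi N}\alpha^{2N}e^{N(2-\alpha)}<\tfrac12\alpha^{2N}e^{N(2-\alpha)}=\alpha^{2N}e^{N(2-\alpha)-\log 2}$, and chaining the three estimates gives $\Phi_{\Omega,\w}^{(p)}(\r)\le\theta_\mathcal{K}\,\nu(\Omega,R)\le\frac{\nu(\Omega,R)}{1-\alpha^{2N}e^{N(2-\alpha)-\log 2}}$. I expect the main obstacle to be precisely this last step: obtaining a clean closed form requires sharp explicit control both of the Laguerre majorant and of the location of the largest zero, and it is this accounting that forces the \emph{uniform} threshold $\alpha\ge5$ rather than an $N$-dependent one.
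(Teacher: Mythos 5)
Your proof follows essentially the same route as the paper's: the truncated reproducing kernel $\mathcal K(z,w)=K_\w(z,w)\chi_{D_R(z)}(w)$ fed into the abstract large sieve, the identification $I_{\mathcal K}(\V_\w\r)(z)=B\cdot\V_\w\r(z)$ with $B$ diagonal in the Hermite basis (your eigenvalues $s_n$ are exactly the paper's $A_n(D_R(0))=\sum_m|\lambda_m|^2C_{n,m}(D_R(0))$ from the quantum local reproducing formula, Proposition~\ref{prop:quant-loc-rep}), and the same Laguerre-tail/incomplete-Gamma/Stirling computation based on the majorant $|L_k^{(\beta)}(s)|\le s^k/k!$ of \eqref{eq:haimi-hedenmalm}, as carried out in Example~\ref{ex:thm1}. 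The one blemish is your justification of that majorant via ``largest zero $\le 4N+2\le \alpha N$'', which fails for $N=1$ at $\alpha=5$; the condition actually needed (and used in the paper) is only $\pi R^2\ge 2N\ge m+n$, which dominates the threshold in \eqref{eq:haimi-hedenmalm}, so the argument still goes through.
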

Choosing $R$ according to the rank $N$ is 
reminiscent of the original large sieve principle for the Paley-Wiener space \cite{DonohoLogan} where the parameter $R$  is chosen according to the bandwidth. It is worth noting that the estimate improves with the rank $N$ of the density operator, suggesting a favorable outlook for the applicability of  $L^{1}$-minimization methods in the recovery of density operators from incomplete information.

The proof of Theorem~\ref{thm:1} is divided in three parts among two different sections, since the first two parts are required for other results. First, as already mentioned, we prove an abstract large sieve in  Proposition~\ref{prop:abstract-LS}. Then we prove Proposition~\ref{prop:quant-loc-rep}, an operator analogue of the local reproducing formula involving a multiplier. Finally, we estimate the multiplier in Example~\ref{ex:thm1}, among  other illustrative cases. Since the  operator analogue
of the local reproducing formula is considerably different from the rank-one case \eqref{eq:loc-rep-gauss}, we state it here for polyradial operators in a   simplified form (see Proposition~\ref{prop:quant-loc-rep} for the full details):
If   $\w$ is normalized in \(\S^2\) and, as in \eqref{eq:finite-polyradial}, we have
$$
\int_{D_R(z)}
K_\w (z, w)\V_\w\r (w)dw = \V_{\widetilde \w} \r (z),\qquad z\in\R^2,
$$
where $D_R(z)$ is the disk of radius $R$ centered at $z$, the operator $\widetilde \w
$ is defined by
$$\widetilde \w=
\sum_{
n=0}^\infty
\lambda_nA_n(R)(h_n\otimes h_n),
$$
and $A_n(R)$ is given explicitly in terms of sums of  integrals of generalized Laguerre polynomials which do not seem to admit relevant simplification. 
 The estimate required in Theorem~\ref{thm:1} follows from a lower bound on $A_n(R)$.

\begin{remark}
Note that, while the non-commutativity of the setting is not visible in the formulation of the  concentration problem, it can manifest itself in subtle ways. The local
reproducing formula stated above, for example, is a
non-commutative analogue of \eqref{eq:loc-rep-gauss} in the sense that $\widetilde{\gamma}$ is in general not a scalar multiple of $\gamma$, unless it is a rank-one operator. While in rank-one settings, the local reproducing formula is equivalent to double orthogonality in concentric domains \cite{Seip0,LargeSieve,ls-wavelets}, this is clearly not the case for higher rank operators (when we identify $h_n$ with $h_n\otimes e$, operator double orthogonality holds for Hermite functions, which as a result, are eigenfunctions of operator  localization operators in a classical sense   \cite{svela}, without any multiplier involved). In the discrete variables
setting of matrix recovery, the `low rank' of the matrix stands as a
non-commutative analogue (for the algebra of matrices) of `sparsity' of
vectors \cite{Gross,GrossIEEE}. See also~\cite[Section~5]{DKLMcN} for a related notion of sparsity for operators. It may be
interesting to observe that, in our continuous setting, sparsity for phase
space representations of operators, is measured in exactly the same way as
 sparsity for phase space representations of functions.  
\end{remark}

In the cases discussed so far, we were able to obtain good estimates of the
constant $\theta _{\mathcal{K}}$, but obtaining precise estimates for the
resulting second term in (\ref{eq:ab-ls}) seems to be out of reach in most
instances of higher rank density operators.\ But for  operators with the
following spectral decomposition (the square root of the so-called \emph{thermal
states,} see Section~\ref{sec:reproducing-kernel} for details and for the proof of Theorem~\ref{thm:2} below)%
\begin{equation}
\gamma =\frac{1}{\sqrt{1+a}}\sum_{n\in \mathbb{N}_{0}}\left( \frac{a}{a+1}%
\right) ^{n/2}(h_{n}\otimes h_{n})\text{,}  \label{squarethermal}
\end{equation}%
we can set $\theta _{\mathcal{K}}=1$ and obtain a good operator norm
estimate for ${\Phi }_{\Omega ,\gamma }^{(p)}$, by bounding the
second factor of  (\ref{eq:ab-ls}) in terms of the Hilbert-Schmidt norm of the reproducing kernel $%
{K}_{\gamma }(z,w)$ and using the Weyl correspondence. This leads to
the following result.

\begin{theorem}
\label{thm:2}Let $a>0$ and $\gamma $ as in (\ref{squarethermal}) and $1\le p<\infty$.\ For every 
$\rho \in \mathfrak{M}^{p}$,  
\begin{equation*}
\Phi_{\Omega,\w}^{(p)}(\r)\leq \frac{1}{\sqrt{1+2a}}\cdot \sup_{w\in \R^{2}}\int_\Omega  e^{-\frac{\pi}{2(1+2a)} |z-w|^2} \leq 2 \sqrt{1+2a}\Big(1-e^{-\frac{|\Omega |}{ 2(1+2a)}}\Big)\text{.}
\end{equation*}
\end{theorem}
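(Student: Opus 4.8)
The plan is to specialize the abstract large sieve inequality \eqref{eq:ab-ls} to the operator $\gamma$ in \eqref{squarethermal} with a judiciously chosen kernel $\mathcal{K}$, so that the multiplier constant $\theta_\mathcal{K}$ collapses to $1$. The natural candidate is the \emph{reproducing kernel} $\mathcal{K}(z,w) = K_\gamma(z,w) = \|\gamma\|_{\S^2}^{-2}\,\gamma^\ast\pi(z)^\ast\pi(w)\gamma$ integrated over all of $\R^2$ (not a disk): since $\V_\gamma(\S^2)$ is a reproducing kernel Hilbert space, we have $I_{K_\gamma}(\V_\gamma\rho) = \V_\gamma\rho$ exactly on $\V_\gamma(\S^2)$, hence the ratio inside $\theta_{K_\gamma}$ is identically $1$ for every $\rho\in\A^1$, giving $\theta_{K_\gamma}=1$. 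This is the point where the special structure of $\gamma$ — its kernel being, up to normalization, a Gaussian in phase space via the Weyl correspondence — does all the work; for a generic higher-rank operator the global kernel is not pointwise well-behaved and one is forced back to disk-truncated local reproducing formulas with a genuine multiplier, which is exactly the difficulty flagged in the paragraph preceding the theorem.

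With $\theta_{K_\gamma}=1$ in hand, \eqref{eq:ab-ls} reduces to bounding $\sup_{w}\int_\Omega \|K_\gamma(z,w)\|_{\mathrm{op}}\,dz$. Here I would first replace the operator norm by the Hilbert–Schmidt norm, $\|K_\gamma(z,w)\|_{\mathrm{op}}\le \|K_\gamma(z,w)\|_{\S^2}$, which is lossless enough for the intended bound and, crucially, computable: $\|\gamma^\ast\pi(z)^\ast\pi(w)\gamma\|_{\S^2}$ can be evaluated through the Weyl/quantum-harmonic-analysis calculus, since $\gamma\gamma^\ast$ is (a multiple of) a thermal state whose Weyl symbol is an explicit Gaussian. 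Concretely, one computes $\|\gamma^\ast\pi(z)^\ast\pi(w)\gamma\|_{\S^2}^2 = \operatorname{tr}\big(\gamma\gamma^\ast\pi(z)^\ast\pi(w)\gamma\gamma^\ast\pi(w)^\ast\pi(z)\big)$, uses the translation-covariance of the trace and the known Fourier–Wigner / Weyl transform of the thermal state $\gamma\gamma^\ast = (1+a)^{-1}\sum_n (a/(a+1))^n (h_n\otimes h_n)$, and reads off a Gaussian of the form $c\cdot e^{-\frac{\pi}{2(1+2a)}|z-w|^2}$ after dividing by $\|\gamma\|_{\S^2}^2$. Tracking the constant $c$ against the normalization yields precisely the prefactor $\frac{1}{\sqrt{1+2a}}$, giving the first inequality
\[
\Phi_{\Omega,\gamma}^{(p)}(\rho)\le \frac{1}{\sqrt{1+2a}}\cdot\sup_{w\in\R^2}\int_\Omega e^{-\frac{\pi}{2(1+2a)}|z-w|^2}\,dz.
\]

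For the second inequality I would invoke the sharp Faber–Krahn estimate \eqref{FKOp}-type bound, i.e. the fact that for a Gaussian weight $e^{-\pi|z-w|^2/(2\sigma)}$ the supremum over $w$ of the integral over $\Omega$ is maximized, among sets of fixed measure, by a disk centered at $w$; rescaling \eqref{Faber-Krahn} with $p=1$ and the dilation $z\mapsto z/\sqrt{\sigma}$ (here $\sigma = 1+2a$) gives $\sup_w\int_\Omega e^{-\frac{\pi}{2\sigma}|z-w|^2}\,dz \le 2\sigma\big(1-e^{-|\Omega|/(2\sigma)}\big)$. Multiplying by $\frac{1}{\sqrt\sigma}=\frac{1}{\sqrt{1+2a}}$ produces the stated bound $2\sqrt{1+2a}\big(1-e^{-|\Omega|/(2(1+2a))}\big)$. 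The main obstacle, as noted, is entirely in the first step: verifying that $\theta_{K_\gamma}=1$ rigorously — i.e. that $I_{K_\gamma}$ acts as the identity on $\A^1\subset\S^2$ and not merely on the Hilbert-space closure — and carrying out the Weyl-symbol computation of $\|K_\gamma(z,w)\|_{\S^2}$ cleanly enough that the constant $\frac{1}{\sqrt{1+2a}}$ falls out without fudge factors; once the Gaussian and its constant are pinned down, the remaining two inequalities are routine applications of results already in the excerpt.
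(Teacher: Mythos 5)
Your proposal is correct and follows essentially the same route as the paper: choosing $\mathcal{K}=K_\gamma$ so that the reproducing property forces $\theta_{K_\gamma}=1$, bounding $\|K_\gamma(z,w)\|_{\mathrm{op}}$ by the Hilbert--Schmidt norm, identifying $\|\gamma^\ast\pi(z)^\ast\pi(w)\gamma\|_{\S^2}^2$ with the operator convolution $\widecheck{\gamma\gamma^\ast}\star\gamma\gamma^\ast(z-w)$, which via the Weyl correspondence and the Gaussian semigroup property equals $\frac{1}{1+2a}e^{-\frac{\pi}{1+2a}|z-w|^2}$, and finally using that the Gaussian integral over a set of fixed measure is maximized by the centered disk. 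The only minor point the paper makes explicit that you leave implicit is verifying admissibility of $\gamma$ (i.e.\ $\V_\gamma\gamma\in L^1(\R^2,\S^2)$, which follows from the Gaussian decay at $w=0$) before applying Proposition~\ref{prop:abstract-LS}.
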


\section{Preliminaries}\label{sec:prel}

\subsection{Notation}

 Without any subscripts $\langle\,\cdot \, ,\,\cdot\,\rangle$ will denote the inner product on $L^2(\R^d)$ with its corresponding norm $\|\cdot\|_2$. The other $L^p$-norms are denoted by \(\|\cdot\|_p\), $1\leq p\leq \infty$.
We use the convention   $f\otimes g=\langle \,\cdot\, ,g\rangle f$ for a   rank-one operator and denote the space of Schatten-$p$ operators acting on $L^2(\R^d)$ by  $\S^p$, $1\leq p\leq \infty$, where \(\S^\infty=\mathcal{B}(L^2(\R^d))\). We use  $\mathscr{S}(\R^d)$ for  the functions in Schwartz class, and define
 the space of Schwartz operators    \(\mathfrak{S}\)   as the space of integral operators on $L^2(\R^d)$  with integral kernel $k\in\mathscr{S}(\R^{2d})$. We denote its dual space, the tempered operators, by \(\mathfrak{S}'\). If an operator \(\r\) is positive  semi-definite, we write \(\r\succeq 0\).


\subsection{Basics of time-frequency analysis and Hermite functions}\label{sec:TFA}

We give a very brief overview of the most important concepts in time-frequency analysis, for further details see, e.g., \cite{Grochenig}.
Given \(f,g \in L^2(\R^d)\) and \(g\neq 0\), the \emph{short-time Fourier transform (STFT)} of \(f\) using the window \(g\) is defined by \begin{align*}
V_gf(z)=\int_{\R^d}f(t)\overline{g(t-x)}e^{-2\pi i \omega\cdot t} dt,\qquad z=(x,\omega)\in \R^{2d}.
\end{align*} 
If we define the time-frequency shift of \(g\) as \[\pi(z)g(t)=\pi(x,\omega)g(t)=g(t-x)e^{2\pi i  \omega\cdot t },\] we may write $V_gf(z)=\langle f,\pi(z)g\rangle.$ A fundamental property of the STFT is \emph{Moyal's identity}: 
\begin{equation}\label{eq:moyal}\int_{\mathbb{R}^{2d}}V_{g_1}f_1(z)\overline{V_{g_2}f_2(z)}dz=\langle f_1,f_2\rangle\overline{\langle g_1,g_2\rangle},\qquad f_1,f_2,g_1,g_2\in L^2(\R^d).
\end{equation}
 As a direct consequence, one may deduce that  $f$ can be reconstructed from $V_g f$ via
\begin{align*}
    f=\frac{1}{\| g\|^2_2}\int_{\R^{2d}} V_gf(z)\pi(z)g dz,
\end{align*} 
where  the integral has to be interpreted weakly. Another implication of \eqref{eq:moyal} is that $V_gf:L^2(\R^d)\to L^2(\R^{2d})$ is an isometry    if $\|g\|_2=1$. Moreover, the adjoint of $V_g$ is given by \[V_g^* F=\int_{\R^{2d}}F(z)\pi(z) gdz,\] where  again   the integral is to be interpreted weakly. If \(g\in \mathscr{S}(\R^d)\), then the definition of the STFT extends to a tempered distribution \(f\in \mathscr{S}'(\R^d)\) via the pairing \(V_gf(z)=\langle f,\pi(z)g \rangle_{\mathscr{S}',\mathscr{S}}\).

Let $h_0(t)=2^{d/4}e^{-\pi |t|^2}$ denote the standard Gaussian. We define the \emph{modulation space} $M^p(\R^d)$, $1\leq p\leq \infty$, as
$$
M^p(\R^d):=\left\{f\in\mathscr{S}'(\R^d):\ \|f\|_{M^p}:=\|V_{h_0} f\|_{p}<\infty\right\}.
$$ 
The \emph{(cross-)Wigner distribution} of \(f,g\in L^2(\R^d)\) is defined by \begin{align*}
    W(f,g)(x,\omega)=\int_{\R^d}f\Big(x+\frac{t}{2}\Big)\overline{g\Big(x-\frac{t}{2}\Big)}e^{-2\pi i \omega\cdot t}dt.
\end{align*}
We write \(W(f)=W(f,f)\).  Given a tempered distribution \(F\) on \(\R^{2d}\) one can define an operator \(L_F\), called the \emph{Weyl transform} of \(F\), via the pairing \begin{align*}
    \langle L_Ff,g\rangle=\langle F,W(g,f)\rangle,\qquad f,g\in\mathscr{S}(\R^d).
\end{align*}
 Given an operator \(T\), if there is a tempered distribution \(F\) on \(\R^{2d}\) such that the relation \begin{align*}
    \langle Tf, g \rangle=\langle F,W(g,f)\rangle,\qquad f,g\in\mathscr{S}(\R^d),
\end{align*}
holds, we call \(F\) the \emph{Weyl symbol} of \(T\).

The Hermite functions, defined by \begin{align*}
    h_n(t)=\frac{2^{1/4}}{\sqrt{n!}}\left(\frac{-1}{2\sqrt{\pi}}\right)^ne^{\pi t^2}\frac{d^n}{dt^n}\left(e^{-2\pi t^2}\right),\qquad t\in\R,
\end{align*} 
play an important role in time-frequency analysis and many related areas of mathematics. They are, for example, the eigenfunctions of the harmonic oscillator and form an orthonormal basis for $L^2(\R)$.
In higher dimensions we   consider the tensor product Hermite functions, defined by \[h_{k}(t_1,t_2,\dots,t_d)=\prod_{j=1}^d h_{k_j}(t_j),\qquad k\in\N_0^d.\]
Next, we define the \emph{generalized Laguerre polynomials} by  \begin{align}\label{eq:def-laguerre}
    L_k^\alpha(t)=\sum_{j=0}^k(-1)^j\binom{k+\alpha}{k-j}\frac{t^j}{j!},\qquad k\in\N_0,\  \alpha,t\geq 0.
\end{align}
Although the definition of the 
 generalized Laguerre polynomials  via \eqref{eq:def-laguerre} is only given for $\alpha\geq 0$, it may be extended for negative $\alpha$ using the reflection identity \begin{align}\label{Reflection}
    \frac{(-t)^n}{n!}L_{j}^{n-j}(t)=\frac{(-t)^j}{j!}L_{n}^{j-n}(t),
\end{align} 
and we will use this property throughout this paper to simplify notation. Like with the Hermite functions, we will   consider tensor product Laguerre polynomials, defined by \begin{align*}
    L_k^\alpha(t_1,t_2,\dots,t_d)=\prod_{j=1}^d L_{k_j}^{\alpha_j}(t_j),\qquad k\in\N_0^d,\,\alpha\in \R_{\geq 0}^d.
\end{align*}
The Laguerre polynomials provide us with a formula for the STFT of a Hermite function with respect to another Hermite function.
\begin{lemma}[Laguerre connection]
 Let $n,k\in\N_0$.   Then
    \begin{align}   V_{h_k}h_n(z)=V_{h_k}h_n(x,\omega)=\sigma_{n,k}(r)e^{i(k-n)\theta}e^{-\pi i xw},\qquad (x,\omega)\in\R^{2}
\end{align} where  \(\sigma_{n,k}(r)=\sqrt{\frac{k!}{n!}\pi^{n-k}}r^{n-k}L_k^{n-k}(\pi r^2)e^{-\pi r^2/2}\), and \(x=r\cos(\theta),\; \omega=r\sin(\theta)\).
\end{lemma}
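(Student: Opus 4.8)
The plan is to establish the Laguerre connection formula by a direct computation of $V_{h_k}h_n(z)$, reducing everything to the one-dimensional case and exploiting the rotational covariance of the STFT under the metaplectic representation. First I would record the well-known generating-function identity for Hermite functions, or equivalently the classical formula relating the product of two Hermite functions under a time-frequency shift to a Laguerre polynomial; concretely, one computes $\langle h_n, \pi(z) h_k\rangle$ by writing out the Gaussian integral $\int_\R h_n(t)\overline{h_k(t-x)}e^{-2\pi i \omega t}\, dt$ and using the Rodrigues formula \eqref{eq:def-laguerre} together with repeated integration by parts, or alternatively by passing to the Bargmann transform where Hermite functions become monomials and the computation becomes a binomial expansion. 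Either route yields a radial profile times a phase.

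The key structural step is to separate the three ingredients that appear in the claimed formula: the radial part $\sigma_{n,k}(r)$, the angular phase $e^{i(k-n)\theta}$, and the chirp $e^{-\pi i x\omega}$. For the angular part, I would invoke the covariance of the STFT under rotations of phase space: the fractional Fourier transform (a metaplectic operator) rotates $(x,\omega)$ and acts on $h_n$ by the scalar $e^{-in\phi}$ (up to the chosen normalization), so $V_{h_k}h_n$ transforms under rotation by $\phi$ with the character $e^{i(k-n)\phi}$, forcing the $e^{i(k-n)\theta}$ dependence. This means it suffices to verify the formula on the ray $\omega = 0$, $x = r \ge 0$ (or on the ray $x=0$), where the chirp factor is trivial and one only needs $V_{h_k}h_n(r,0) = \sigma_{n,k}(r)$ as a genuine one-variable integral identity. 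That identity is the standard Laguerre integral: $\int_\R h_n(t) h_k(t-r)\,dt$ evaluated via the Rodrigues representation of $h_n$ gives, after $\min(n,k)$ integrations by parts, a Hermite polynomial in $r$ against a Gaussian, which reorganizes into $\sqrt{k!/n!}\,\pi^{(n-k)/2} r^{n-k} L_k^{n-k}(\pi r^2) e^{-\pi r^2/2}$; the reflection identity \eqref{Reflection} handles the case $k > n$ so that the stated formula is symmetric in form.

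The chirp factor $e^{-\pi i x\omega}$ is a normalization artifact of the non-symmetric convention $\pi(x,\omega)g(t) = g(t-x)e^{2\pi i \omega t}$ used in \eqref{STFT}: under this convention $V_{h_k}h_n(x,\omega)$ differs from the rotation-covariant (symmetric/Wigner-type) quantity by exactly $e^{-\pi i x\omega}$, which one reads off by comparing $\pi(z)$ with the symmetrized time-frequency shift $\rho(z) = e^{\pi i x\omega}\pi(z)$. So the clean recipe is: compute in the symmetric convention where the answer is manifestly $\sigma_{n,k}(r)e^{i(k-n)\theta}$ by the rotation argument plus the one-dimensional base case, then multiply by $e^{-\pi i x\omega}$ to return to the convention of the paper.

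I expect the main obstacle to be purely bookkeeping rather than conceptual: pinning down the exact constant $\sqrt{k!/n!}\,\pi^{n-k}$ and the exponent conventions (the factors of $\pi$ from the $e^{-\pi t^2}$ normalization of $h_0$, the $2^{1/4}$, and the placement of $1/2$ in the Gaussian exponent) requires care, and one must be consistent about whether $n \ge k$ or $k \ge n$ so that the reflection identity \eqref{Reflection} is applied correctly to keep $L_k^{n-k}$ well-defined (a Laguerre polynomial with nonnegative upper index). Since the statement is a standard lemma — essentially the Hermite–Laguerre connection underlying the polyanalytic Fock space theory — the cleanest writeup simply cites the one-dimensional computation (e.g. from the time-frequency analysis literature) for the base case on the ray and then deduces the general formula from rotational covariance; the higher-dimensional tensor version, if needed, follows by taking products, though as stated the lemma is one-dimensional.
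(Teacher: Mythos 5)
The paper does not actually prove this lemma: it is stated as a known classical identity (the Hermite--Laguerre connection, found e.g.\ in Folland's \emph{Harmonic Analysis in Phase Space} and already used in the prior work \cite{LargeSieve}), so there is no in-paper argument to compare against. Your proposed derivation is a correct and standard route to it: the reduction to the symmetric (ambiguity-function) convention, the rotational covariance under the fractional Fourier transform forcing the \(e^{i(k-n)\theta}\) character, and the one-dimensional Laguerre integral on the ray \(\omega=0\) together do determine the function completely, and the reflection identity \eqref{Reflection} correctly handles \(k>n\). One small correction to your bookkeeping: with the paper's convention \(\pi(x,\omega)g(t)=g(t-x)e^{2\pi i\omega t}\), the symmetrized shift satisfies \(\rho(z)=e^{-\pi i x\omega}\pi(z)\) (equivalently, the ambiguity function is \(A(f,g)=e^{\pi i x\omega}V_gf\)), not \(\rho(z)=e^{\pi i x\omega}\pi(z)\) as you wrote; your final recipe of multiplying the symmetric-convention answer by \(e^{-\pi i x\omega}\) is nevertheless the right one. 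The remaining issues you flag (the exact power \(\pi^{(n-k)/2}\) hidden in \(\sqrt{\pi^{n-k}}\), and the sign of the angular character, which depends on the normalization \(\mathcal{F}_\phi h_n=e^{-in\phi}h_n\)) are genuinely just bookkeeping and are pinned down by checking a single nontrivial case such as \(n=1\), \(k=0\).
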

Due to the tensor product structure of the Hermite and Laguerre functions the above formula  naturally extends to higher dimensions.

\subsection{The operator STFT}\label{sec:op-stft}

In this paper, we are mostly concerned with concentration inequalities for the following analogue of the STFT for Hilbert-Schmidt operators.
    Given \(\w,\r\in\S^2\), we define \(\mathfrak{V}_\w\r\) as the \(\S^2\)-valued function \begin{align}
      \mathfrak{V}_\w\r(z):=  \w^*\pi(z)^*\r,\qquad z\in \R^{2d}.
    \end{align} 
The operator STFT was first studied in~\cite{OpSTFTACHA}. Like the usual STFT, the \emph{operator STFT} represents a signal (an operator acting on functions on \(\R^d\)) by a function on the phase space \(\R^{2d}\). However, unlike the usual STFT, the operator STFT is operator-valued. Nevertheless, the operator STFT inherits many useful properties from the STFT. We collect the ones we need below. For further details we refer to \cite{OpSTFT}.

\begin{prop}\label{prop:op-stft}
    Let \(\sigma,\r,\w,\eta\in\S^2\), and \( F\in L^2(\R^{2d},\S^2)\). Then the following properties hold:
    \begin{enumerate}
        \item Moyal's identity: $$\int_{\R^{2d}}  \langle\V_\w\r(z),\V_\eta\sigma(z)\rangle_{\S^2}dz= \langle \eta,\w\rangle_{\S^2}\langle \r,\sigma\rangle_{\S^2} .$$
           \item The operator  \(\V_\w:\S^2\to L^2(\R^{2d},\S^2)\) is a multiple of an isometry,
          and its adjoint $\V_\w^\ast:L^2(\R^{2d},\S^2)\to\S^2$ is given by  $$\V_\w^\ast F=\int_{\R^{2d}}\pi(z)\w F(z)dz,$$ 
          where the integral is to be understood weakly in $\S^2$.
        \item  Inversion: $$\V_\w^*\V_\w\r=\int_{\R^{2d}}\pi(z)\w \V_\w\r(z)dz=\|\w\|_{\S^2}^2\, \r.$$
        \item The range  \(\V_\w(\S^2)\) is an \(\S^2\)-valued reproducing kernel Hilbert space with reproducing kernel $$
        K_\w(z,w)=\|\w\|_{\S^2}^{-2}\,\w^*\pi(z)^*\pi(w)\w.$$
    \end{enumerate}
\end{prop}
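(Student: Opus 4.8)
The plan is to reduce all four items to the corresponding facts for the scalar STFT (in particular Moyal's identity \eqref{eq:moyal}) by decomposing operators along an orthonormal basis $(e_k)_k$ of $L^2(\R^d)$. Writing $\r=\sum_k(\r e_k)\otimes e_k$ and $\w=\sum_j(\w e_j)\otimes e_j$ (convergent in $\S^2$), and using the convention $f\otimes g=\langle\,\cdot\,,g\rangle f$, a direct computation gives the ``matrix'' expansion
\[
\V_\w\r(z)=\sum_{j,k}V_{\w e_j}(\r e_k)(z)\,\big(e_j\otimes e_k\big),
\]
whence $\langle\V_\w\r(z),\V_\eta\sigma(z)\rangle_{\S^2}=\sum_{j,k}V_{\w e_j}(\r e_k)(z)\,\overline{V_{\eta e_j}(\sigma e_k)(z)}$. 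I would then integrate over $\R^{2d}$, interchange the sum with the integral — justified by Cauchy--Schwarz together with $\sum_j\|\w e_j\|_2\|\eta e_j\|_2\le\|\w\|_{\S^2}\|\eta\|_{\S^2}$ and the analogous bound for $\r,\sigma$, so that the double sum of $L^1$-norms is finite — and apply scalar Moyal \eqref{eq:moyal} termwise. This produces $\sum_{j,k}\langle\r e_k,\sigma e_k\rangle\,\overline{\langle\w e_j,\eta e_j\rangle}=\langle\r,\sigma\rangle_{\S^2}\,\overline{\langle\w,\eta\rangle_{\S^2}}=\langle\eta,\w\rangle_{\S^2}\langle\r,\sigma\rangle_{\S^2}$, which is item~(1).

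Item~(2) follows in two steps. Taking $\w=\eta$, $\r=\sigma$ in (1) gives $\|\V_\w\r\|_{L^2(\R^{2d},\S^2)}=\|\w\|_{\S^2}\|\r\|_{\S^2}$, so $\V_\w$ is $\|\w\|_{\S^2}$ times an isometry and has closed range. For the adjoint, I would rewrite $\langle\V_\w\r,F\rangle_{L^2(\R^{2d},\S^2)}=\int\operatorname{tr}\!\big(F(z)^\ast\w^\ast\pi(z)^\ast\r\big)\,dz=\int\langle\r,\pi(z)\w F(z)\rangle_{\S^2}\,dz$, note that the integrand equals $\langle\V_\w\r(z),F(z)\rangle_{\S^2}$ and is thus in $L^1(\R^{2d})$ (a product of two $L^2$ functions), and invoke the Riesz representation theorem to obtain a unique element of $\S^2$, denoted $\int\pi(z)\w F(z)\,dz$, representing this functional; it equals $\V_\w^\ast F$. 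Item~(3) is then a pairing computation: $\langle\V_\w^\ast\V_\w\r,\sigma\rangle_{\S^2}=\langle\V_\w\r,\V_\w\sigma\rangle_{L^2(\R^{2d},\S^2)}=\|\w\|_{\S^2}^2\langle\r,\sigma\rangle_{\S^2}$ for all $\sigma$ by (1), so $\V_\w^\ast\V_\w\r=\|\w\|_{\S^2}^2\r$, and substituting the formula for $\V_\w^\ast$ from (2) yields the integral form.

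For item~(4), since $\V_\w$ is a nonzero multiple of an isometry, $\V_\w(\S^2)$ is a closed subspace of $L^2(\R^{2d},\S^2)$, and $P:=\|\w\|_{\S^2}^{-2}\,\V_\w\V_\w^\ast$ is self-adjoint, idempotent (using $\V_\w^\ast\V_\w=\|\w\|_{\S^2}^2 I$ from (3)), and has range $\V_\w(\S^2)$; hence $P$ is the orthogonal projection onto $\V_\w(\S^2)$. Evaluating $PF$ via the formula for $\V_\w^\ast$ and moving the bounded operator $\w^\ast\pi(z)^\ast$ through the weak integral gives $(PF)(z)=\int_{\R^{2d}}K_\w(z,w)F(w)\,dw$ with $K_\w(z,w)=\|\w\|_{\S^2}^{-2}\,\w^\ast\pi(z)^\ast\pi(w)\w$; in particular $F(z)=\int K_\w(z,w)F(w)\,dw$ for every $F\in\V_\w(\S^2)$, so evaluation functionals are bounded on $\V_\w(\S^2)$. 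One finishes by noting the two structural identities $K_\w(\cdot,w)=\|\w\|_{\S^2}^{-2}\,\V_\w(\pi(w)\w)\in\V_\w(\S^2)$ and $K_\w(w,z)^\ast=K_\w(z,w)$, which identify $K_\w$ as the reproducing kernel.

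The only genuinely delicate point throughout is the handling of the $\S^2$-valued integrals: giving precise meaning to $\int\pi(z)\w F(z)\,dz$ as a weak integral in $\S^2$, justifying the Fubini-type interchange of $\sum_{j,k}$ with $\int dz$ in (1), justifying that bounded operators may be pulled through such integrals in (4), and clarifying the sense in which $\int K_\w(z,w)F(w)\,dw$ converges. Each of these reduces to a routine Cauchy--Schwarz/Riesz argument once one observes that the relevant scalar integrands are $L^1$; everything else is a formal consequence of the scalar STFT identities.
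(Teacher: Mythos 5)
Your proposal is correct. Note that the paper itself offers no proof of this proposition: it is stated as a collection of known properties with a pointer to the references \cite{OpSTFT,OpSTFTACHA}, so there is no in-paper argument to compare against. Your derivation — expanding $\w$ and $\r$ along an orthonormal basis, identifying the matrix entries of $\V_\w\r(z)$ as scalar STFTs $V_{\w e_j}(\r e_k)(z)$, and reducing item (1) to the scalar Moyal identity \eqref{eq:moyal} with the Fubini interchange controlled by $\sum_j\|\w e_j\|_2\|\eta e_j\|_2\le\|\w\|_{\S^2}\|\eta\|_{\S^2}$ — is sound and is essentially the standard argument used in the cited literature (which typically phrases the same computation via the singular value decomposition rather than a fixed basis). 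Items (2)--(4) then follow formally as you describe: the adjoint via Riesz representation defining the weak $\S^2$-valued integral, inversion by polarization, and the reproducing kernel by identifying $\|\w\|_{\S^2}^{-2}\V_\w\V_\w^\ast$ as the orthogonal projection onto the closed range and commuting the bounded operator $\w^\ast\pi(z)^\ast$ through the weak integral. No gaps.
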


Using the singular value decomposition of a normalized  \(\w=\sum_{n=0}^\infty \lambda_n (f_n\otimes g_n)\), where  \(\{f_n\}_{n\in\N}\) and  \(\{g_n\}_{n\in\N}\) are two orthonormal sets, we may write the reproducing kernel \(\w^*\pi(z)^*\pi(w)\w\)  as 
\begin{align}\label{eq:kernel-written-out}
    \w^*\pi(z)^*\pi(w)\w=\sum_{m,n=0}^\infty \lambda_n\lambda_m\langle \pi(z) f_n,\pi(w) f_m\rangle (g_m\otimes g_n),
\end{align}
and obtain therefore a quadratic representation of operators in phase space, which may be an interesting alternative to the Wigner representation of operators \cite{cordero2024wigner}. If one  considers operators of the form \(\r=f\otimes g\), where $g\in L^2(\R^d)$ is normalized and fixed,  the operator STFT evaluates to \(\w^*\pi(z)^*(f\otimes g)=(\w^*\pi(z)^*f)\otimes g.\)  
It is therefore natural to define the STFT of a function \(f\) with respect to an operator \(\w\)  as the first term in the operator STFT of its corresponding rank-one operator \(f\otimes g\). Indeed, we define \(\mathfrak{V}_\w f\) as the \(L^2(\R^d)\)-valued function \begin{align}
      \mathfrak{V}_\w f(z):=  \w^*\pi(z)^*f,\qquad z\in\R^{2d}.
    \end{align} 
See \cite{OpSTFT} and \cite{EqNorms} for a detailed analysis of this transform and note that the properties of Proposition~\ref{prop:op-stft} also hold with minor adjustments.

As we are mainly interested in concentration estimates,  we note that it follows from the basic 
identity $\|\V_\w f(z)\|_2=\|\V_\w (f\otimes g)(z)\|_{\S^2}$ that 
\begin{equation}\label{eq:norm-S^2=norm-L^2}
\int_{\R^{2d}}\|\V_\w f(z)\|_2^pd\mu(z)=\int_{\R^{2d}}\|\V_\w (f\otimes g)(z)\|_{\S^2}^pd\mu(z),
\end{equation}
where $1\leq p\leq \infty$, and $\mu$ is an arbitrary measure on $\R^{2d}.$
Therefore, all results that we derive   for the operator-valued STFT  immediately carry over to the function-valued STFT.

\subsection{Quantum harmonic analysis and mixed-state localization operators}
On several occasions in this paper we will encounter convolutions between operators, and between operators and functions. These are the central objects of  the quantum harmonic analysis framework, first studied by Werner \cite{Werner}. The subject has attracted substantial interest in recent years; see, for example,~\cite{halvdansson2023quantum,Fulsche,QHABall,AffineQHA,LieQHA,luef2018convolutions}. We recall the basic notions here. For details and connections to time-frequency analysis we refer to \cite{Bible2}. 

For $z\in\R^{2d}$ and $T\in \mathcal{B}(L^2(\R^d))$ we define the \emph{translation of an operator} $T$ by
$$
\alpha_z(T)=\pi(z)T\pi(z)^\ast,
$$
and the  \emph{involution of an operator} $T$ via
$$
\widecheck{T}=PTP,
$$
where $P$ is the parity operator $Pf(t)=f(-t).$   
For future reference, we note that   $$\alpha_z(\alpha_w(T))=\alpha_{z+w}(T),\qquad   (\widecheck{T})\, \widecheck{\ } =T\qquad \text{and}\qquad  \alpha_{-z}(\widecheck{T})=P\alpha_z(T)P.$$ The latter property follows directly from the simple observations that $P\pi(z)=\pi(-z)P$ and $\pi(z)^\ast P=P\pi(-z)^\ast$.
The \emph{convolution of two operators} $S,T\in \mathcal{S}^1$
is the function $S\star T\in L^1(\R^{2d})$ defined as  
$$
S\star T(z)=\text{tr}(S\alpha_z(\widecheck{T})),\qquad z\in\R^{2d},
$$
while the convolution of a function $F\in L^1(\R^{2d})$ and an operator $S\in \S^1$ is the operator $F\star S\in \S^1$ given by 
$$
F\star S=\int_{\R^{2d}}F(z)\alpha_z(S)dz.
$$
We will later need that the convolution between two operators can be interpreted as the ordinary convolution between their respective Weyl symbols~\cite[Proposition~2.2]{AccCC}, i.e., for $F,G\in L^1(\R^{2d}),$ 
\begin{align}\label{eq:convolution-op-vs-symbol}
       L_F\star L_G(z)=F\ast G(z),\qquad z\in\R^{2d}.
    \end{align}
Although initially only defined for \(L^1\)-functions and trace class operators, the convolutions extend to the other Lebesgue spaces and Schatten classes via the \textit{Werner-Young inequalities}: (\cite[Proposition~3.6]{Bible2} or \cite{Werner}) \begin{align}\label{eq:werner-young}
    \|S\star T\|_{r}\leq \|S\|_{\S^p}\|T\|_{\S^q},
  \qquad\text{and}\qquad   \|F\star S\|_{\S^r}\leq \|F\|_{p}\|S\|_{\S^q},
\end{align}
where \(1\le p,q,r\le \infty\) satisfy \(\frac{1}{p}+\frac{1}{q}=1+\frac{1}{r}\). By duality, the two convolutions may also be extended to tempered distributions and operators in \(\mathfrak{S}'\), see \cite{SchwartzOp}, or \cite[Proposition~3.16]{Bible2}. 

The operator-operator convolution is connected to the operator STFT in the following way:
\begin{align}
  \|\V_\w\r\|_{\S^2}^2 & =\mathrm{tr}\left(\w^*\pi(z)^*\r\r^*\pi(z)\w\right)= \mathrm{tr}\left(\pi(z)\w\w^*\pi(z)^*\r\r^*\right)\notag
  \\
  &= \mathrm{tr}\left(\alpha_z(\w\w^*)\r\r^*\right)=\widecheck{\w\w^*}\star \r\r^*(z).\label{comp:cohen}
\end{align}
For future reference, we state this computation as a lemma. 
\begin{lemma}\label{Identity}
   Let \(\w, \r\in\S^2\).   Then we have\begin{align*}
        \big\|\V_\w\r(z)\big\|_{\S^2}=\sqrt{\widecheck{\w\w^*}\star \r\r^*(z)},\qquad z\in\R^{2d}.
    \end{align*}
\end{lemma}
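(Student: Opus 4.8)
\textbf{Proof proposal for Lemma~\ref{Identity}.}
The plan is to carry out the chain of identities already sketched in the displayed computation \eqref{comp:cohen} and then justify each step carefully. First I would compute the square of the pointwise Hilbert-Schmidt norm:
\[
\big\|\V_\w\r(z)\big\|_{\S^2}^2=\big\|\w^*\pi(z)^*\r\big\|_{\S^2}^2=\mathrm{tr}\!\left((\w^*\pi(z)^*\r)(\w^*\pi(z)^*\r)^*\right)=\mathrm{tr}\!\left(\w^*\pi(z)^*\r\r^*\pi(z)\w\right),
\]
using that $(\w^*\pi(z)^*\r)^*=\r^*\pi(z)\w$ and that $\pi(z)$ is unitary so $(\pi(z)^*)^*=\pi(z)$. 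The key point needed to make the trace manipulations legitimate is that $\w,\r\in\S^2$, hence $\w\w^*,\r\r^*\in\S^1$, and products of Hilbert-Schmidt operators are trace class, so all the operators whose traces appear are indeed trace class and the cyclicity of the trace applies.

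Next I would apply cyclicity of the trace to move $\pi(z)\w\w^*\pi(z)^*$ to the front:
\[
\mathrm{tr}\!\left(\w^*\pi(z)^*\r\r^*\pi(z)\w\right)=\mathrm{tr}\!\left(\pi(z)\w\w^*\pi(z)^*\r\r^*\right)=\mathrm{tr}\!\left(\alpha_z(\w\w^*)\,\r\r^*\right),
\]
where the last equality is just the definition $\alpha_z(T)=\pi(z)T\pi(z)^*$ with $T=\w\w^*$. Finally, by the definition of the operator-operator convolution, $S\star T(z)=\mathrm{tr}(S\,\alpha_z(\widecheck T))$, applied with $S=\r\r^*$ and $\widecheck T=\w\w^*$, i.e.\ $T=(\widecheck{\w\w^*})\,\widecheck{\ }=\widecheck{\w\w^*}$ (using the involutivity $(\widecheck{T})\,\widecheck{\ }=T$), gives
\[
\mathrm{tr}\!\left(\alpha_z(\w\w^*)\,\r\r^*\right)=\r\r^*\star\widecheck{\w\w^*}(z)=\widecheck{\w\w^*}\star\r\r^*(z),
\]
where the last step uses commutativity of the operator-operator convolution (\cite[Proposition~3.2]{Bible2} or \cite{Werner}), which is also easily checked directly from cyclicity of the trace. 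Taking square roots — which is permitted since $\widecheck{\w\w^*}\star\r\r^*(z)=\|\V_\w\r(z)\|_{\S^2}^2\ge 0$ — yields the claimed formula.

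There is no real obstacle here; the lemma is essentially a bookkeeping exercise repackaging the definitions. The only point deserving a line of care is the trace-class justification for the cyclicity steps (so that $\mathrm{tr}(AB)=\mathrm{tr}(BA)$ is valid), which follows from the fact that a product of two Hilbert-Schmidt operators is trace class together with $\w\w^*,\r\r^*\in\S^1$; after that, every equality is a direct substitution of a definition ($\alpha_z$, $\widecheck{\,\cdot\,}$, and $\star$) or an instance of the commutativity/involutivity identities for operator convolution recorded in Section~\ref{sec:prel}.
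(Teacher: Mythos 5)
Your proof is correct and follows essentially the same route as the paper, which records precisely this chain of equalities in \eqref{comp:cohen}: expand the squared pointwise Hilbert--Schmidt norm as a trace, apply cyclicity (justified because products of Hilbert--Schmidt operators are trace class), and identify the result with the operator--operator convolution using the involutivity of $T\mapsto\widecheck{T}$ and the commutativity of $\star$. The only blemish is the garbled intermediate expression ``$T=(\widecheck{\w\w^*})\,\widecheck{\ }$'' --- from $\widecheck{T}=\w\w^*$ one gets $T=\widecheck{\w\w^*}$ directly --- but this is a notational slip that does not affect the argument.
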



For a positive  operator $\sigma$ with $\|\sigma\|_{\S^1}=1$, we will also 
consider the so-called \emph{mixed-state localization operators} defined as $A_{\Omega}^\sigma=\chi_\Omega\star \sigma$ \cite{Bible2}. If we write $\sigma=\w\w^\ast$, then 
it was shown in 
 \cite[Section~4.3]{OpSTFT} that  the {mixed-state localization operator } with respect to $\sigma$ is unitarily equivalent to $T_\Omega$, the Toeplitz operator on the quantum Gabor space \(\V_{\w}(\S^2)\) with symbol \(\chi_{\Omega}\): 
    $$
    \chi_\Omega \star( \w\w^\ast)=\V_\w^\ast  {T}_\Omega \V_\w,
    $$
    where $T_\Omega:\V_\w(\S^2)\to\V_\w(\S^2),$ $T_\Omega F=\V_\w\V_\w^\ast(\chi_\Omega\cdot F).$
In particular, any bound on the first eigenvalue of the Toeplitz operator $T_\Omega$ provides a bound for $\chi_\Omega \star( \w\w^\ast)$ as well.  This corresponds to $p=2$ in our main results, see Remark~\ref{rem:mixed-state}. 

\subsection{Modulation spaces of operators}
We define the  class of \textit{admissible operators} as the following subset of the Hilbert-Schmidt operators \begin{align*}
        \mathfrak{M}^1=\left\{\w\in \S^2\colon \mathfrak{V}_\w\w\in L^1(\R^{2d},\S^2)\right\}.
    \end{align*}
\(\mathfrak{M}^1\) is a special case of the \(\mathfrak{A}_v\)-classes which were introduced in \cite{OpSTFT} for the study of co-orbit spaces of operators. \(\A^1\) contains all operators that are nuclear from \(L^2(\R^d)\) to \(M^1(\R^d),\) and is also closed under linear combinations and convolutions with functions in \(L^1(\R^{2d})\). For our purposes, it turns out that \(\mathfrak{M}^1\) is the correct operator analogue of Feichtinger's algebra \(M^1(\R^d)\).

Keeping this analogy in mind, we now define the operator co-orbit spaces \(\M^p\)~\cite{OpSTFT,DKLMcN}. They will be the operator analogues of the modulation spaces $M^p(\R^d)$.
\begin{definition}
    Let \(p\in[1,\infty]\), \(h_0\) be the standard \(d\)-dimensional Gaussian, and let \(\w_0=h_0\otimes h_0\). We define the space \(\A^p\) by
    \begin{align*}    \A^p=\left\{\r\in\mathfrak{S}'\colon \V_{\w_0}\r\in L^p(\R^{2d},\S^2)\right\},
    \end{align*}
    where $\mathfrak{S}'$ denotes the tempered operators, i.e., the operators with Weyl symbols in the space of tempered distributions.

\end{definition}
The spaces $\A^p$ are Banach spaces, $\A^2=\S^2$,  and the inversion formula (Proposition~\ref{prop:op-stft} (3)) extends to these spaces, see  \cite[Section~5]{OpSTFT} for proofs and a detailed discussion. In addition, the rank-one operator \(f\otimes g\) belongs to \(\A^p\) if and only if $f\in M^p(\R^d),$ and $g \in L^2(\R^d)$, so we can consider the classical modulation space \(M^p(\R^d)\) as embedded inside \(\A^p\). Furthermore, \(\A^p\) inherits many of the useful properties of \(M^p(\R^d)\). For instance, replacing the operator \(\w_0\) with any other admissible operator defines an equivalent norm on \(\A^p\). 
\begin{prop}[Special case of Proposition 5.13 in \cite{OpSTFT}]\label{prop:eqNorm}
    Any operator \(\w\in\A^1\) defines an equivalent norm on \(\A^p\). That is, \begin{align*}        
\|\V_\w\r\|_{L^p(\R^{2d},\S^2)}=\|\w^*\pi(z)^*\r\|_{L^p(\R^{2d},\S^2)}\asymp\|\r\|_{\A^p}
    \end{align*}
    for every \(p\in[1,\infty]\).
    Furthermore, since \(\|V_{h_0}f\|_{p}=\|\V_{\w_0}(f\otimes h_0)\|_{L^p(\R^{2d},\S^2)},\) any \(\w\in\A^1\) defines an equivalent norm on \(M^p(\R^d)\)\begin{align*}
        \|\V_\w f\|_{L^p(\R^{2d},L^2(\R^d))}=\|\w^*\pi(z)^*f\|_{L^p(\R^{2d},L^2(\R^d))}\asymp\|f\|_{M^p}.
    \end{align*}
\end{prop}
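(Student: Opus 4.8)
This is the operator-valued version of the standard ``change of window'' argument of coorbit theory, and the plan is to reduce it to a single reproducing identity together with Young's inequality for convolutions. Since $\|\r\|_{\A^p}=\|\V_{\w_0}\r\|_{L^p(\R^{2d},\S^2)}$ by definition of $\A^p$, it is enough to establish the two-sided bound $\|\V_\w\r\|_{L^p(\R^{2d},\S^2)}\asymp\|\V_{\w_0}\r\|_{L^p(\R^{2d},\S^2)}$ for every $\w\in\A^1$ and $\r\in\A^p$. The function-valued statement will then follow immediately by applying the operator-valued one to the rank-one operator $f\otimes g$ with $\|g\|_2=1$ and using the identity $\|\V_\w f(z)\|_2=\|\V_\w(f\otimes g)(z)\|_{\S^2}$ from Section~\ref{sec:op-stft} together with \eqref{eq:norm-S^2=norm-L^2}.

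\textbf{Step 1.} First I would record a change-of-window reproducing formula. Starting from the inversion formula of Proposition~\ref{prop:op-stft}(3), which (as recalled above) extends from $\S^2$ to all of $\A^p$, one has weakly $\r=\|\w_0\|_{\S^2}^{-2}\int_{\R^{2d}}\pi(w)\w_0\,\V_{\w_0}\r(w)\,dw$; applying the bounded operator $\w^*\pi(z)^*$ on the left and pulling it inside the integral gives
\[
\V_\w\r(z)=\frac{1}{\|\w_0\|_{\S^2}^2}\int_{\R^{2d}}\big(\w^*\pi(z)^*\pi(w)\w_0\big)\,\V_{\w_0}\r(w)\,dw .
\]
Using the composition law $\pi(z)^*\pi(w)=\tau(z,w)\pi(w-z)$ with $|\tau(z,w)|=1$, one identifies $\w^*\pi(z)^*\pi(w)\w_0=\tau(z,w)\big(\V_{\w_0}\w(w-z)\big)^*$, so its $\S^2$-norm equals $\|\V_{\w_0}\w(w-z)\|_{\S^2}$. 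Combined with $\|AB\|_{\S^2}\le\|A\|_{\mathrm{op}}\|B\|_{\S^2}\le\|A\|_{\S^2}\|B\|_{\S^2}$, this yields the pointwise estimate $\|\V_\w\r(z)\|_{\S^2}\le\|\w_0\|_{\S^2}^{-2}\,(G^{\vee}*H)(z)$, where $G(u)=\|\V_{\w_0}\w(u)\|_{\S^2}$, $G^{\vee}(u)=G(-u)$, and $H(u)=\|\V_{\w_0}\r(u)\|_{\S^2}$.

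\textbf{Step 2.} Since $\w\in\A^1$ means exactly that $G\in L^1(\R^{2d})$, with $\|G\|_1=\|\V_{\w_0}\w\|_{L^1(\R^{2d},\S^2)}=\|\w\|_{\A^1}$ (recall $\|\w_0\|_{\S^2}=1$), Young's inequality gives $\|\V_\w\r\|_{L^p(\R^{2d},\S^2)}\le\|\w\|_{\A^1}\,\|\r\|_{\A^p}$ for all $p\in[1,\infty]$. For the reverse inequality I would run Step~1 with the roles of $\w$ and $\w_0$ exchanged, obtaining $\|\V_{\w_0}\r\|_{L^p(\R^{2d},\S^2)}\le\|\w\|_{\S^2}^{-2}\,\|\V_\w\w_0\|_{L^1(\R^{2d},\S^2)}\,\|\V_\w\r\|_{L^p(\R^{2d},\S^2)}$. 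The constant here is finite: applying the inequality just proven with $\r$ replaced by $\w_0\in\S^2=\A^2$ and $p=1$ gives $\|\V_\w\w_0\|_{L^1}\le\|\w\|_{\A^1}\|\w_0\|_{\A^1}<\infty$, where $\w_0\in\A^1$ because $\|\V_{\w_0}\w_0(z)\|_{\S^2}=e^{-\pi|z|^2/2}$ is integrable. Combining the two directions yields the claimed norm equivalence, and the function-valued case follows as described above.

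\textbf{Main obstacle.} The only genuinely delicate part is the handling of the weak integrals when $p>2$, where $\r\in\A^p$ need not be Hilbert--Schmidt: one must justify the inversion formula on $\A^p$, the interchange of $\w^*\pi(z)^*$ with the integral, and the pointwise $\S^2$-bound, which is done via the Banach-space structure of $\A^p$ and a density/duality argument (this is precisely what is carried out in \cite[Section~5]{OpSTFT}, so it can be quoted). The composition law for time-frequency shifts, the submultiplicativity of the Hilbert--Schmidt norm, and Young's inequality for convolutions are all routine. As a byproduct, the argument also shows that the definition of $\A^1$ does not depend on the reference window.
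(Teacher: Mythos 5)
Your argument is correct. The paper itself gives no proof of this proposition --- it is stated as a special case of Proposition~5.13 in \cite{OpSTFT} and simply cited --- and your change-of-window argument (inversion formula with reference window $\w_0$, the identity $\|\w^*\pi(z)^*\pi(w)\w_0\|_{\S^2}=\|\V_{\w_0}\w(w-z)\|_{\S^2}$, and Young's inequality, with the roles of $\w$ and $\w_0$ swapped for the reverse bound) is precisely the standard coorbit-theoretic proof underlying the cited result, with the genuinely technical points (weak integrals and the extension of the inversion formula to $\A^p$ for $p>2$) correctly identified and appropriately deferred to \cite[Section~5]{OpSTFT}.
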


Lastly, we will need that $\A^p$ is an interpolation space when interpolating between $\A^1$ and $\A^\infty$, which can be proven using similar arguments as in   \cite[Theorem~6.8]{QTFA}.
\begin{prop}\label{prop:interpolation}
    Let \(p\in(1,\infty)\) and \(\theta=1-\frac{1}{p}\). Then we have\begin{align*}
        \big[\A^1,\A^\infty\big]_{\theta}=\A^p
    \end{align*}
\end{prop}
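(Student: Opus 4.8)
The plan is to follow the strategy of \cite[Theorem~6.8]{QTFA}, transferring the complex interpolation identity from the sequence/function side to the operator side via the operator STFT. First I would note that by Proposition~\ref{prop:eqNorm} the operator STFT $\V_{\w_0}$ with $\w_0 = h_0 \otimes h_0$ realizes $\A^p$ isometrically (up to equivalent norms) as a closed subspace of $L^p(\R^{2d}, \S^2)$, namely its range $\V_{\w_0}(\S^2)$, which by Proposition~\ref{prop:op-stft}(4) is the reproducing kernel space cut out by the integral operator with kernel $K_{\w_0}$. The operator $\mathcal{P} := \V_{\w_0}\V_{\w_0}^\ast$ acts on $L^p(\R^{2d},\S^2)$ for every $p \in [1,\infty]$ (the kernel $K_{\w_0}$ has rapid off-diagonal decay, being built from Gaussians, so boundedness on $L^1$ and $L^\infty$ is immediate, and one interpolates or checks directly for intermediate $p$), and it is a projection onto $\V_{\w_0}(\S^2) \cap L^p$. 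This exhibits $\A^p$ as a \emph{retract} of $L^p(\R^{2d},\S^2)$: the pair of maps $\V_{\w_0}\colon \A^p \to L^p$ and $\V_{\w_0}^\ast \colon L^p \to \A^p$ satisfy $\V_{\w_0}^\ast \V_{\w_0} = \|\w_0\|_{\S^2}^2 \,\mathrm{Id}$ on $\A^p$ by the inversion formula, and they are bounded simultaneously for $p = 1$ and $p = \infty$.

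Next I would invoke the standard stability of complex interpolation under retractions (e.g.\ the argument in Triebel or Bergh--L\"ofstr\"om): if a Banach space $X$ is a retract of $Y$ via morphisms bounded on a compatible couple, then $[X_1, X_\infty]_\theta$ is the retract of $[Y_1, Y_\infty]_\theta$ cut out by the same projection. Here $Y_p = L^p(\R^{2d},\S^2)$, and the vector-valued interpolation identity $[L^1(\R^{2d},\S^2), L^\infty(\R^{2d},\S^2)]_\theta = L^p(\R^{2d},\S^2)$ for $\theta = 1 - \tfrac1p$ holds because $\S^2$, being a Hilbert space, is in particular a UMD (indeed reflexive) Banach space, so the classical Calder\'on result for Bochner $L^p$-spaces applies. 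Combining these two facts gives
\begin{align*}
    \big[\A^1, \A^\infty\big]_\theta
    = \mathcal{P}\big(\big[L^1(\R^{2d},\S^2), L^\infty(\R^{2d},\S^2)\big]_\theta\big)
    = \mathcal{P}\big(L^p(\R^{2d},\S^2)\big)
    = \A^p.
\end{align*}

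The main obstacle I anticipate is the careful verification that $\mathcal{P} = \V_{\w_0}\V_{\w_0}^\ast$ is bounded on $L^p(\R^{2d},\S^2)$ for the endpoint exponents $p=1,\infty$ simultaneously with the right identifications of ranges, and that the abstract retract lemma applies verbatim in the vector-valued (operator-target) setting rather than just the scalar one. One must check that $K_{\w_0}(z,w)$, viewed through its operator norm $\|K_{\w_0}(z,w)\|_{\mathrm{op}}$, has integrable decay uniformly in one variable — this follows from the explicit Gaussian form of $\w_0^\ast \pi(z)^\ast \pi(w)\w_0$ and the Laguerre connection lemma, since the relevant quantity decays like $e^{-c|z-w|^2}$ — so that Schur's test gives $L^1$ and $L^\infty$ boundedness. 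The remaining points (closedness of the range, the reproducing property $\mathcal{P}F = F$ for $F$ in the range, and that $\V_{\w_0}^\ast$ maps $L^p$ into $\A^p$ with the norm equivalence) are already contained in Proposition~\ref{prop:op-stft} and Proposition~\ref{prop:eqNorm}, so the argument reduces to assembling these ingredients exactly as in \cite[Theorem~6.8]{QTFA}.
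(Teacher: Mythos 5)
Your argument is correct, but it takes a genuinely different route from the paper. The paper's proof does not use the retract structure at all: it invokes the isomorphism $\A^p\cong W\big(\mathcal{F}L^p(\R^{d},L^2(\R^d)),L^p(\R^d)\big)$ from \cite{QTFA}, reduces the interpolation of the $\A^p$-scale to interpolation of Wiener amalgam spaces, and then interpolates separately in the local and global components. You instead run the classical coorbit-theoretic retract argument: $\V_{\w_0}$ and $\V_{\w_0}^\ast$ exhibit $\A^p$ as a complemented subspace of $L^p(\R^{2d},\S^2)$ via the projection $\mathcal{P}=\V_{\w_0}\V_{\w_0}^\ast$, whose kernel satisfies $\|K_{\w_0}(z,w)\|_{\mathrm{op}}=e^{-\pi|z-w|^2/2}$ (since $K_{\w_0}(z,w)=\langle\pi(w)h_0,\pi(z)h_0\rangle\,(h_0\otimes h_0)$), so Schur's test gives boundedness at both endpoints, and stability of complex interpolation under retracts finishes the proof. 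Both routes ultimately rest on the same vector-valued Calder\'on identity $[L^1(\S^2),L^\infty(\S^2)]_\theta=L^p(\S^2)$ (unproblematic here because $\S^2$ is a Hilbert space, hence reflexive — your appeal to UMD is not the operative hypothesis, but it is harmless), and your route additionally needs the correspondence principle at the endpoints, i.e.\ that the range of $\mathcal{P}$ on $L^1$ and on $L^\infty$ is exactly $\V_{\w_0}(\A^1)$ resp.\ $\V_{\w_0}(\A^\infty)$; this is covered by the extension of the inversion formula cited from \cite[Section~5]{OpSTFT}. What your approach buys is self-containedness within the paper's own machinery (no detour through the amalgam-space isomorphism of \cite{QTFA}); what the paper's approach buys is that it piggybacks on structural results already established for the $\A^p$-scale and avoids having to verify the endpoint behaviour of the reproducing-kernel projection.
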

\begin{proof}
    By Proposition 3.16 and Lemma 4.3 in \cite{QTFA}, we have \begin{align*}
        \A^p\cong\mathcal{F}W\big(\mathcal{F}L^p(\R^{d},L^2(\R^d)),L^p(\R^d)\big).
    \end{align*}
    Interpolation between \(\A^p\)-spaces thus corresponds to interpolation between suitable Wiener amalgam spaces. As interpolation in Wiener amalgam spaces can be carried out separately in the local and global components, we get \begin{align*}
        \big[W\big(\mathcal{F}L^1(\R^{d},L^2),&L^1(\R^d)\big),W\big(\mathcal{F}L^\infty(\R^{d},L^2(\R^d)),L^\infty(\R^d)\big)\big]_{\theta}\\&=W\big(\big[\mathcal{F}L^1(\R^{d},L^2(\R^d)),\mathcal{F}L^\infty(\R^{d},L^2(\R^d))\big]_{\theta},\big[L^1(\R^d),L^\infty(\R^d)\big]_{\theta}\big)\\
        &=W\big(\mathcal{F}L^p(\R^{d},L^2(\R^d)),L^p(\R^d)\big),
    \end{align*}
    since \(\big[L^1\big(\R^{d},L^2(\R^d)\big),L^\infty\big(\R^{d},L^2(\R^d)\big)\big]_{\theta}=L^p(\R^{d},L^2(\R^d))\), just like in the scalar-valued case \cite{InterpolationSpaces}. The interpolation result for the \(\M^p\) spaces now follows by using the fact that the Fourier transform is an isomorphism between \(\M^p\) and \(W\big(\mathcal{F}L^p(\R^{d},L^2(\R^d)),L^p(\R^d)\big),\) see, for instance, \cite{feichtinger1983modulation}.
\end{proof}

\subsection{Reinhardt domains} In this section, we recall  some basic properties  of Reinhardt domains, for further details, see \cite{SCVBook}. 
Reinhardt domains are generalizations of radially symmetric subsets of the plane. A set \(\Delta\subset\R^2\) is radially symmetric if and only if any rotation leaves \(\Delta\) invariant. In other words, if we let the circle \(\T\) act on \(\R^2\) by \(e^{i\theta}(x,\omega)=(\cos(\theta)x,\sin(\theta)\omega)\), then \(e^{i\theta}(x,\omega)\in \Delta\) for any \(e^{i\theta}\in \T\) and \((x,\omega)\in \Delta\). For \(d>1\) we consider the action of the torus \(\T^d\) on \(\R^{2d}\) given by
\[
e^{i\theta}(x,\omega)=\big(\cos(\theta_1)x_1,\cos(\theta_2)x_2,\dots,\cos(\theta_d)x_d,\sin(\theta_1)\omega_1,\sin(\theta_2)\omega_2,\dots ,\sin(\theta_d)\omega_d\big).
\] 
We say that a set \(\Delta\subset\R^{2d}\) is a \emph{Reinhardt domain} if for any \(e^{i\theta}\in \T^d\) and \((x,\omega)\in \Delta\) we have \(e^{i\theta}(x,\omega)\in \Delta\). In other words, we have \[e^{i\theta}\Delta=\Delta,\qquad \text{ for all } e^{i\theta}\in \T^d.\]
Reinhardt domains have a nice description in terms of polyradial coordinates: $x_i=r_i\cos(\theta_i),$ $\omega_i=r_i\sin(\theta_i)$. It then turns out that any Reinhardt domain \(\Delta\) may be written as \(W\times\T^d\), where \(W\subset\R_{\geq 0}^d\). The set \(W\) is uniquely determined by \(\Delta\) and vice versa, and we will refer to \(W\) as the \emph{Reinhardt shadow} of \(\Delta\) \cite{ReinhardtArticle}.
Many familiar domains are Reinhardt domains. Below we collect some examples and their Reinhardt shadows. \begin{exmp}
    \begin{itemize}
        \item The ball in \(\R^{2d}\) with radius \(R\) is a Reinhardt domain. Its shadow is \(\{r\in \R_{\geq 0}^d\colon \sum_{j=1}^d r_j^2\leq R^2\}\).
        \item The polydisk in \(\R^{2d}\) with polyradius \(R=(R_1,R_2,\dots, R_d)\) is a Reinhardt domain. Its shadow is \(\prod_{j=1}^d [0,R_j]\subset \R_{\geq 0}^d\).
        \item In general, let \(p\in (0,\infty]\). Balls in the \(p\)-(semi)norm centered at the origin are Reinhardt domains. Their shadows are given by \(\{r\in \R_{\geq0}^d\colon \sum_{j=1}^d r_j^p\leq R^p\}\).
    \end{itemize}
\end{exmp}

\section{Eigenvalues of vector-valued time-frequency localization operators}\label{sec:S^2=L^2}

In this section, we  discuss some basic properties of the spectrum of the mixed-state localization operator $A_\Omega^{\w\w^\ast}$ defined on $L^2(\R^d)$ and on $\S^2$ respectively, and show that, for $p=2$, the solutions to the  optimal concentration problems   \eqref{Rayleigh} and \eqref{RayleighOp} on $\S^2$ and  on $L^2(\R^d)$ coincide. In other words, the concentration problem on operators reduces to the same concentration problem on pure states. 

 Recall our standing assumption \(\|\w\|_{\S^2}=1\). 
First, we note that if $\Omega$ is compact, then $A_\Omega^{\w\w^\ast}: L^2(\R^d)\to L^2(\R^d)$ is a trace class operator. To see this consider the singular value decomposition of $\w=\sum_{n\in\N}\lambda_n (g_n\otimes f_n)$ and let $\{e_n\}_{n\in\N}$ be any orthonormal basis of $L^2(\R^d)$. Then 
\begin{align*}
    \text{trace}(A_\Omega^{\w\w^\ast})&=\sum_{m\in\N}\langle A_\Omega^{\w\w^\ast}e_m ,e_m\rangle =\sum_{n,m\in\N}\int_\Omega |\lambda_n|^2 \big\langle \pi(z)(g_n\otimes g_n)\pi(z)^\ast e_m,e_m\big\rangle dz
    \\
    &=\sum_{n,m\in\N}\int_\Omega |\lambda_n|^2\big| \big\langle \pi(z)  g_n,e_m\big\rangle\big|^2 dz=\sum_{n\in\N}\int_\Omega |\lambda_n|^2 dz=|\Omega|.
\end{align*}
If $\w$ satisfies certain additional localization assumptions then it can be shown that the eigenvalue profile of $A_\Omega^{\w\w^\ast}$ behaves similar to the classical time-frequency localization operators. See  \cite{spec-dev-2} for a detailed discussion of that matter.

\begin{prop}\label{prop:vector=operator}
    The mixed-state localization operators $A_\Omega^{\w\w^\ast}: L^2(\R^d)\to L^2(\R^d)$ and $A_\Omega^{\w\w^\ast}: \S^2\to  \S^2$ have the same eigenvalues. Moreover, every eigenvalue of $A_\Omega^{\w\w^\ast}:\S^2\to  \S^2$ has infinite multiplicity and the corresponding eigenvectors have rank one
    unless the corresponding eigenvalue of $A_\Omega^{\w\w^\ast}: L^2(\R^d)\to L^2(\R^d)$ has multiplicity greater than one. Finally,
    \begin{align}\label{eq:equal-norm}
    \sup_{\r\in\S^2} \frac{\int_\Omega \|\V_\w\r(z)\|_{\S^2}^2dz}{\|\r\|_{\S^2}^2}=\sup_{f\in L^2(\R^d)} \frac{\int_\Omega \|\V_\w f(z)\|_{2}^2dz}{\|f\|_{2}^2},
\end{align}
and   the left-hand side is maximized by a rank-one operator.
\end{prop}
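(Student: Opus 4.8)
The plan is to exploit the identity \eqref{eq:norm-S^2=norm-L^2}, namely $\|\V_\w f(z)\|_2 = \|\V_\w(f\otimes g)(z)\|_{\S^2}$ for any normalized $g\in L^2(\R^d)$, which shows that the operator-valued STFT, when evaluated on rank-one operators of the form $f\otimes g$, behaves exactly like the function-valued STFT. This immediately gives the inequality ``$\le$'' in \eqref{eq:equal-norm}: testing the left-hand supremum against rank-one operators $\r = f\otimes g$ with $\|g\|_2=1$ produces exactly the right-hand quantity, so the left-hand supremum is at least the right-hand one, and in fact we want the reverse containment too. Concretely, I would first show that for \emph{arbitrary} $\r\in\S^2$ one has
\[
\int_\Omega \|\V_\w\r(z)\|_{\S^2}^2\,dz \le \Big(\sup_{f\in L^2(\R^d)}\frac{\int_\Omega\|\V_\w f(z)\|_2^2\,dz}{\|f\|_2^2}\Big)\,\|\r\|_{\S^2}^2,
\]
which yields ``$\le$'' in \eqref{eq:equal-norm}; combined with the rank-one test giving ``$\ge$'', equality follows, and the maximizer can be taken rank-one.

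For the displayed bound on general $\r$, I would write the singular value decomposition $\r = \sum_k s_k (u_k\otimes v_k)$ with $\{u_k\}$, $\{v_k\}$ orthonormal and $\sum_k s_k^2 = \|\r\|_{\S^2}^2$. The key observation is that $\V_\w\r(z) = \w^*\pi(z)^*\r$ acts columnwise: since $\r = \sum_k s_k\langle\,\cdot\,,v_k\rangle u_k$, we get $\V_\w\r(z) = \sum_k s_k (\w^*\pi(z)^* u_k)\otimes v_k = \sum_k s_k (\V_\w u_k(z))\otimes v_k$, and because the $v_k$ are orthonormal this is an orthogonal decomposition in $\S^2$, giving $\|\V_\w\r(z)\|_{\S^2}^2 = \sum_k s_k^2 \|\V_\w u_k(z)\|_2^2$. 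Integrating over $\Omega$ and applying the scalar bound to each $u_k$ (each of norm one) gives $\int_\Omega\|\V_\w\r(z)\|_{\S^2}^2\,dz = \sum_k s_k^2\int_\Omega\|\V_\w u_k(z)\|_2^2\,dz \le \big(\sup_f \Phi\big)\sum_k s_k^2 = \big(\sup_f\Phi\big)\|\r\|_{\S^2}^2$, as wanted. This also transparently proves the spectral claims: the operator $A_\Omega^{\w\w^*}$ on $\S^2$ acts on the first tensor factor only (by Proposition~\ref{prop:op-stft} and the definition $A_\Omega^{\w\w^*}\r = \V_\w^*T_\Omega\V_\w\r$), so on each subspace $\{h\otimes v : h\in L^2\}$ it is a copy of the scalar operator $A_\Omega^{\w\w^*}$ on $L^2$; choosing $v$ to range over an orthonormal basis produces infinitely many orthogonal copies, hence every eigenvalue acquires infinite multiplicity, and extra rank among eigenvectors appears precisely when the scalar eigenvalue was already degenerate.

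The main (mild) obstacle is being careful about the tensor-factor bookkeeping: one must verify that $\V_\w\r(z)$ genuinely splits as an orthogonal sum over the right singular vectors $v_k$ of $\r$ — this uses that $A^*(u\otimes v) = (A^*u)\otimes v$ for the left action, together with orthonormality of $\{v_k\}$ to get the Pythagorean identity in $\S^2$ pointwise in $z$ — and then that Tonelli's theorem permits interchanging $\sum_k$ and $\int_\Omega$, which is immediate since all terms are nonnegative. The rest is the Courant–Fischer characterization of the top eigenvalue of $A_\Omega^{\w\w^*}$ (valid here since $A_\Omega^{\w\w^*}$ is positive and, for compact $\Omega$, trace class by the computation preceding the proposition) identifying each Rayleigh quotient in \eqref{eq:equal-norm} with the respective largest eigenvalue, so that equality of suprema is equality of top eigenvalues, which follows from the eigenvalue identification already established.
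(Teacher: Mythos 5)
Your proof is correct, and for the equality \eqref{eq:equal-norm} it takes a genuinely different route from the paper's. The paper expands an arbitrary $\r\in\S^2$ in a basis built from the eigenfunctions $\{f_n\}$ of the scalar operator $A_\Omega^{\w\w^\ast}$ on $L^2(\R^d)$ (completed by a basis $\{g_n\}$ of the kernel, tensored with an arbitrary orthonormal basis $\{e_k\}$), and then evaluates the quadratic form $\langle A_\Omega^{\w\w^\ast}\r,\r\rangle_{\S^2}$ directly as a convex combination of the scalar Rayleigh quotients $\langle A_\Omega^{\w\w^\ast}f_n,f_n\rangle$ and $\langle A_\Omega^{\w\w^\ast}g_n,g_n\rangle$. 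You instead use the singular value decomposition of $\r$ itself to obtain the pointwise Pythagorean identity $\|\V_\w\r(z)\|_{\S^2}^2=\sum_k s_k^2\|\V_\w u_k(z)\|_2^2$, integrate with Tonelli, and bound each left singular vector $u_k$ by the scalar supremum. Your version has the advantage of not relying on a spectral decomposition of $A_\Omega^{\w\w^\ast}$ at all for \eqref{eq:equal-norm}, so it works verbatim for any measurable $\Omega$ (the paper's basis expansion implicitly uses compactness of the scalar operator, which it only verifies for compact $\Omega$); the paper's version has the advantage that the same basis computation simultaneously delivers the eigenvalue and multiplicity statements. For those spectral claims your argument (the operator acts only on the first tensor factor, so $\S^2$ splits into infinitely many orthogonal copies of the scalar problem) is essentially the same as the paper's. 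One small caveat shared by both arguments: the final assertion that the left-hand side of \eqref{eq:equal-norm} \emph{is maximized} by a rank-one operator presupposes that the scalar supremum is attained, which is guaranteed when $A_\Omega^{\w\w^\ast}$ is compact on $L^2(\R^d)$ (e.g.\ $|\Omega|<\infty$); it would be worth flagging this hypothesis explicitly.
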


\begin{remark}
    Note that    $A_\Omega^{\w\w^\ast}:\S^2\to\S^2$ is not compact since every nonzero eigenvalue has infinite multiplicity. The equality \eqref{eq:equal-norm} is therefore not a mere consequence of the min-max-theorem and   the 
observation that the point spectra of $A_\Omega^{\w\w^\ast}:L^2(\R^d)\to L^2(\R^d)$ and $A_\Omega^{\w\w^\ast}:\S^2\to\S^2$  are equal.
\end{remark}
\begin{proof}[Proof of Proposition~\ref{prop:vector=operator}]
    Let $\{\lambda_n\}_{n\in\N}$ be the collection of nonzero eigenvalues of $A_\Omega^{\w\w^\ast}:L^2( \R^d)\to L^2( \R^d) $  in non-increasing order, and   $\{f_n\}_{n\in\N}\subset L^2(\R^d)$ be an orthonormal family such that 
    $
    A_\Omega^{\w\w^\ast}  f_n=\lambda_n  f_n.
    $
  Then for every $g\in L^2(\R^d):$
    $$
    A_\Omega^{\w\w^\ast}  (f_n\otimes g)=   \big(A_\Omega^{\w\w^\ast} f_n\big)\otimes g = (\lambda_n  f_n)\otimes g=\lambda_n (f_n\otimes g).
    $$
    In other words, $f_n\otimes g$ is an eigenvector  in $\S^2$ for the eigenvalue $\lambda_n$. Since $g$ is arbitrary, it clearly follows that the multiplicity of $\lambda_n$ is infinite for every $n\in\N$.

    If $\{g_n\}_{n\in\N}$ is an orthonormal basis of $\overline{\text{span}}^\bot(\{f_n\}_{n\in\N})$ and $\{e_k\}_{k\in\N}$ is an orthonormal basis of $L^2(\R^d)$, then we may write every $\r\in \S^2$ as
\begin{equation}\label{eq:T-in-HS-basis}
\r=\sum_{k,n\in\N} \alpha_{n,k} (f_n\otimes e_k)+\sum_{k,n\in\N} \beta_{n,k} (g_n\otimes e_k),
\end{equation}
where $\{\alpha_{n,k}\}_{(n,k)\in\N^2},\{\beta_{n,k}\}_{(n,k)\in\N^2}\in \ell^2(\N^2).$ In particular, $\{g_n\}_{n\in\N}$ is is a basis of  $\text{ker}(A_\Omega^{\w\w^\ast})$.
Now assume that $\r$ is an eigenvector of $A_\Omega^{\w\w^\ast}$ with corresponding  eigenvalue $\nu\neq 0$. Then
    $$
  \nu \r=  A_\Omega^{\w\w^\ast} \r =\sum_{k,n\in\N} \alpha_{n,k} (A_\Omega^{\w\w^\ast} f_n\otimes e_k)+\sum_{k,n\in\N} \beta_{n,k} (A_\Omega^{\w\w^\ast} g_n\otimes e_k)=\sum_{k,n\in\N} \lambda_n \alpha_{n,k}  (f_n\otimes e_k),
    $$
which is only possible  if $\beta_{n,k}=0$   and
$\alpha_{n,k}=\frac{\lambda_n}{\nu}\alpha_{n,k} $ for every $(n,k)\in\N^2$.  In particular, $\nu=\lambda_{n^\ast}$  for some $n^\ast\in\N$ and $\{\alpha_{n,k}\}_{k\in\N}=0$ if $\lambda_n\neq \lambda_{n^\ast}$. If $\lambda_{n^\ast}$ has multiplicity one, then it follows that $\text{rank}(\r)=1.$

To prove \eqref{eq:equal-norm} we note that  from \eqref{eq:T-in-HS-basis}
 \begin{align*}
    \sup_{\r\in\S^2} \frac{\int_\Omega \|\V_\w\r(z)\|_{\S^2}^2dz}{\|\r\|_{\S^2}^2}&=\sup_{\r\in\S^2} \frac{\langle A_\Omega^{\w\w^\ast} \r,\r\rangle_{\S^2}}{\|\r\|_{\S^2}^2}\\
    &=\sup_{\alpha,\beta\in\ell^2(\N^2)} \frac{\sum_{n,m\in\N}|\alpha_{n,k}|^2\langle A_\Omega^{\w\w^\ast} f_n,f_n\rangle +|\beta_{n,k}|^2\langle A_\Omega^{\w\w^\ast}g_n,g_n\rangle}{\|\alpha\|_{\ell^2}^2+\|\beta\|_{\ell^2}^2}
    \\
    &=\sup_{f\in L^2(\R^d)} \frac{\langle A_\Omega^{\w\w^\ast} f,f\rangle}{\|f\|_{2}^2} =\sup_{f\in L^2(\R^d)} \frac{\int_\Omega \|\V_\w f(z)\|_{2}^2dz}{\|f\|_{2}^2}.
\end{align*} 

 \end{proof}

\begin{remark}
 Equation \eqref{eq:equal-norm}  has interesting consequences when one considers the problem of optimal concentration among all concentration domains of  fixed measure. As already discussed in the introduction,     Nicola and Tilli showed in their landmark paper \cite{faber-krahn} that the following Faber-Krahn inequality holds for the STFT with Gaussian window $h_0$ 
 \begin{equation}\label{eq:Faber-Krahn}
 \sup_{   { |\Omega|=C}}\ \sup_{f\in L^2(\R)} \frac{\int_\Omega |V_{h_0} f(z)|^2dz}{\|f\|_{2}^2}\leq 1-e^{-C},\qquad \Omega\subset \R^2,
 \end{equation}
 with equality occurring if and only if (up to measure zero perturbations) $\Omega=D_{\sqrt{C/\pi} }(z^\ast)$ and $f=\pi(z^\ast)h_0$ for some $z^\ast\in\R^2$.
Note that  a stable version of this inequality was introduced in  \cite{stable-faber-krahn}. 
 If we consider $\w=h_0\otimes g$ for an arbitrary $g\in L^2(\R)$ with $\|g\|_2=1$, then we deduce from a  combination of \eqref{eq:equal-norm} and \eqref{eq:Faber-Krahn}  that 
 $$
 \sup_{ |\Omega|=C}\sup_{\r\in\S^2} \frac{\int_\Omega \|\V_\w\r(z)\|_{\S^2}^2dz}{\|\r\|_{\S^2}^2}\leq 1-e^{-C}, \qquad \Omega\subset \R^2,
 $$
 where equality holds if and only if    one has $\Omega=D_{\sqrt{C/\pi} }(z^\ast)$ (up to sets of measure zero)   and $\r=(\pi(z^\ast)h_0)\otimes f$   for some $z^\ast\in\R^2$ and $f\in L^2(\R)$ with $\|f\|_2=1$.
\end{remark}

\section{A large sieve principle for the operator STFT}\label{sec:large-sieve}

We now prove an operator-valued version of the large sieve principle presented in \cite{LargeSieve}. We will start by proving the result in the case where both \(\w\) and \(\r\) are admissible. The result can then be extended to \(\r\in \A^p\) by interpolation.

\begin{prop}\label{prop:abstract-LS}
    Let \(\w,\r\in \A^1\) with \(\|\w\|_{\S^2}=1\) and let \(\Omega\subset\R^{2d}\) be measurable. 
    If $\mathcal{K}:\R^{2d}\times\R^{2d}\to \mathcal{B}(L^2(\R^d))$ is such that 
   \begin{equation}\label{eq:ass-K}
I_\mathcal{K}(\V_\w\r):= \int_{\R^{2d}}\mathcal{K}(\, \cdot\, ,w)\V_\w\r(w)dw 
    \end{equation}
 is bounded and boundedly invertible on $\V_\w(\A^1)\subset L^1(\R^{2d},\S^2)$, then we have 
    \begin{align*}
        \frac{\|\chi_{\Omega}\cdot\V_\w\r\|_{L^1(\R^{2d},\S^2)}}{\|\V_\w\r\|_{L^1(\R^{2d},\S^2)}}\leq \theta_\mathcal{K}\cdot\sup_{w\in \R^{2d}}\int_\Omega \left\|\mathcal{K}(z,w)\right\|_{\mathrm{op}}dz,
    \end{align*} 
    with 
    $$
\theta_\mathcal{K}:=\sup_{\r\in\A^1}\left(\frac{\|\V_\w\r\|_{L^1(\R^{2d},\S^2)}}{\|I_\mathcal{K}(\V_\w\r)\|_{L^1(\R^{2d},\S^2)}}\right).
    $$
\end{prop}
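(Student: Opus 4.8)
The plan is to run the classical large sieve argument of \cite{LargeSieve} in the operator-valued setting, with the two-sided ideal inequality $\|AB\|_{\S^2}\le\|A\|_{\mathrm{op}}\|B\|_{\S^2}$ playing the role that the modulus plays in the scalar case. Since $\|\w\|_{\S^2}=1$, the map $\V_\w$ is an isometry onto $\V_\w(\A^1)$; combined with the hypothesis that $I_\mathcal{K}$ is bounded and boundedly invertible on $\V_\w(\A^1)$, this produces, for the given $\r\in\A^1$, a unique $\sigma\in\A^1$ with $\V_\w\sigma=I_\mathcal{K}^{-1}(\V_\w\r)$, equivalently $\V_\w\r=I_\mathcal{K}(\V_\w\sigma)$. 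Spelling out \eqref{eq:ass-K}, this means that for almost every $z\in\R^{2d}$,
\begin{equation*}
\V_\w\r(z)=\int_{\R^{2d}}\mathcal{K}(z,w)\,\V_\w\sigma(w)\,dw
\end{equation*}
as a Bochner integral in $\S^2$. (If $\r=0$ the asserted inequality is trivial, so we may assume $\|\V_\w\r\|_{L^1(\R^{2d},\S^2)}>0$.)

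Next I would pass to the pointwise estimate
\begin{equation*}
\|\V_\w\r(z)\|_{\S^2}\le\int_{\R^{2d}}\|\mathcal{K}(z,w)\|_{\mathrm{op}}\,\|\V_\w\sigma(w)\|_{\S^2}\,dw\qquad\text{for a.e. }z,
\end{equation*}
which follows from the triangle inequality for the Bochner integral together with the ideal property of $\S^2$ recalled above. Integrating over $\Omega$ and applying Tonelli's theorem (all integrands being non-negative and measurable) yields
\begin{align*}
\int_\Omega\|\V_\w\r(z)\|_{\S^2}\,dz
&\le\int_{\R^{2d}}\|\V_\w\sigma(w)\|_{\S^2}\left(\int_\Omega\|\mathcal{K}(z,w)\|_{\mathrm{op}}\,dz\right)dw\\
&\le\left(\sup_{w\in\R^{2d}}\int_\Omega\|\mathcal{K}(z,w)\|_{\mathrm{op}}\,dz\right)\|\V_\w\sigma\|_{L^1(\R^{2d},\S^2)}.
\end{align*}

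It then remains to control $\|\V_\w\sigma\|_{L^1(\R^{2d},\S^2)}$ by $\|\V_\w\r\|_{L^1(\R^{2d},\S^2)}$, which is immediate from the definition of $\theta_\mathcal{K}$: applied to $\sigma\in\A^1$ it gives $\|\V_\w\sigma\|_{L^1(\R^{2d},\S^2)}\le\theta_\mathcal{K}\,\|I_\mathcal{K}(\V_\w\sigma)\|_{L^1(\R^{2d},\S^2)}=\theta_\mathcal{K}\,\|\V_\w\r\|_{L^1(\R^{2d},\S^2)}$, with $\theta_\mathcal{K}<\infty$ because it is exactly the norm of $I_\mathcal{K}^{-1}$ on $(\V_\w(\A^1),\|\cdot\|_{L^1})$. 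Combining the last two displays and dividing by $\|\V_\w\r\|_{L^1(\R^{2d},\S^2)}$ gives the claim. I do not expect a genuine obstacle here; the only points needing attention are bookkeeping: that $I_\mathcal{K}^{-1}$ indeed maps $\V_\w(\A^1)$ into itself so that the preimage $\sigma$ lies in $\A^1$ (part of the invertibility hypothesis), and that $w\mapsto\mathcal{K}(z,w)$ is measurable with $\|\mathcal{K}(z,w)\|_{\mathrm{op}}$ jointly measurable to the extent required by the Bochner integral and Tonelli (implicit in $I_\mathcal{K}$ being well-defined on $\V_\w(\A^1)$).
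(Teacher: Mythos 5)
Your argument is correct and is essentially identical to the paper's proof: both produce the preimage $\sigma$ (the paper's $\widetilde{\r}$) from the bounded invertibility of $I_\mathcal{K}$ on $\V_\w(\A^1)$, apply the triangle inequality for the Bochner integral together with the ideal property $\|AB\|_{\S^2}\le\|A\|_{\mathrm{op}}\|B\|_{\S^2}$, swap the order of integration, and close with the definition of $\theta_\mathcal{K}$ applied to $\sigma$. No gaps.
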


\begin{proof}
    We adapt the proof of  \cite[Proposition~1]{LargeSieve} to our operator-valued setting. The assumption on $\mathcal{K}$ implies that for each $\r\in\A^1$ there exists a unique $\widetilde{\r}\in\A^1$ such that 
    $$
   \V_\w\r(z)= \int_{\R^{2d}}\mathcal{K}(z   ,w)\V_\w \widetilde{\r}(w)dw 
    $$
    for almost every $z\in\R^{2d}.$
    Therefore,
    \begin{align*}
\int_{\Omega}\left\|\V_\w\r(z)\right\|_{\S^2}dz 
        &=\int_{\Omega}\left\|\int_{\R^{2d}}\mathcal{K} (z,w)\mathfrak{V}_\w\widetilde{\r}(w)dw\right\|_{\S^2}dz
        \\
        &\leq \int_{\Omega}\int_{\R^{2d}}\left\|\mathcal{K}(z,w)\mathfrak{V}_\w\widetilde{\r}(w)\right\|_{\S^2}dwdz 
        \\
        &\leq \int_{\R^{2d}}\int_{\Omega}\left\|\mathcal{K}(z,w)\right\|_{\mathrm{op}}\big\|\mathfrak{V}_\w\widetilde{\r}(w)\big\|_{\S^2}dzdw
        \\
        &\leq \sup_{w\in\R^{2d}}\int_{\Omega}\left\|\mathcal{K}(z,w)\right\|_{\mathrm{op}}dz \cdot \int_{\R^{2d}} \big\|\mathfrak{V}_\w\widetilde{\r}(w)\big\|_{\S^2}dw
        \\
        &=\frac{\big\|\mathfrak{V}_\w\widetilde{\r}\big\|_{L^1(\R^{2d},\S^2)}}{ \|\mathfrak{V}_\w\r \|_{L^1(\R^{2d},\S^2)}}\cdot \sup_{w\in\R^{2d}}\int_{\Omega}\left\|\mathcal{K}(z,w)\right\|_{\mathrm{op}}dz \cdot \big\|\mathfrak{V}_\w {\r}\big\|_{L^1(\R^{2d},\S^2)}
        \\ &\leq \theta_\mathcal{K}\cdot\sup_{w\in\R^{2d}}\int_{\Omega}\left\|\mathcal{K}(z,w)\right\|_{\mathrm{op}}dz \cdot \big\|\mathfrak{V}_\w {\r}\big\|_{L^1(\R^{2d},\S^2)}.
    \end{align*}
    The second inequality is due to the ideal property of \(\S^2\), i.e., \(\|AB\|_{\S^2}\leq \|A\|_{\mathrm{op}}\|B\|_{\S^2}\).
\end{proof}

We now state the general large sieve principle for the operator STFT.

\begin{prop}\label{QuantumSieve}
Let \(p\in[1,\infty)\), \(\w\in \A^1\) with \(\|\w\|_{\S^2}=1\) and let \(\Omega\subset\R^{2d}\) be measurable. If $\mathcal{K}$ is as in Proposition~\ref{prop:abstract-LS}, then we have for \(\r\in \A^p\)
\begin{align*}
        \frac{\|\chi_{\Omega}\cdot\V_\w\r\|_{L^p(\R^{2d},\S^2)}^p}{\|\V_\w\r\|_{L^p(\R^{2d},\S^2)}^p}\leq \theta_\mathcal{K} \cdot\sup_{w\in \R^{2d}}\int_\Omega \left\|\mathcal{K}(z,w)\right\|_{\mathrm{op}} dz.
\end{align*} 
\end{prop}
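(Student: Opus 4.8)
The plan is to deduce Proposition~\ref{QuantumSieve} by interpolation between the two endpoint cases $p=1$ and $p=\infty$. The case $p=1$ is exactly Proposition~\ref{prop:abstract-LS}: writing $C_\Omega:=\theta_\mathcal{K}\cdot\sup_{w\in\R^{2d}}\int_\Omega\|\mathcal{K}(z,w)\|_{\mathrm{op}}\,dz$, it gives $\|\chi_\Omega\cdot\V_\w\r\|_{L^1(\R^{2d},\S^2)}\le C_\Omega\,\|\V_\w\r\|_{L^1(\R^{2d},\S^2)}$ for $\r\in\M^1$. The case $p=\infty$ is free: from the pointwise bound $\|\chi_\Omega(z)\,\V_\w\r(z)\|_{\S^2}\le\|\V_\w\r(z)\|_{\S^2}$ we get $\|\chi_\Omega\cdot\V_\w\r\|_{L^\infty(\R^{2d},\S^2)}\le\|\V_\w\r\|_{L^\infty(\R^{2d},\S^2)}$ for $\r\in\M^\infty$. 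Both are estimates for the linear map $\mathcal{B}\colon\r\mapsto\chi_\Omega\cdot\V_\w\r$ measured relative to the norms $\|\V_\w\,\cdot\,\|_{L^q}$, which by Proposition~\ref{prop:eqNorm} are equivalent to the intrinsic $\M^q$-norms.

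The next step is to recast these as operator bounds on the range spaces $\mathcal{R}^q:=\V_\w(\M^q)\subset L^q(\R^{2d},\S^2)$, equipped with the $L^q$-norm. Under the identification $\r\leftrightarrow\V_\w\r$, which is a linear isomorphism $\M^q\to\mathcal{R}^q$ with inverse $\V_\w^\ast$ (Proposition~\ref{prop:op-stft}(3) together with $\|\w\|_{\S^2}=1$), the map $\mathcal{B}$ becomes multiplication by $\chi_\Omega$, viewed as an operator $\mathcal{R}^q\to L^q(\R^{2d},\S^2)$, of norm $\le C_\Omega$ for $q=1$ and $\le 1$ for $q=\infty$. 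Transporting Proposition~\ref{prop:interpolation} along $\V_\w$ gives $[\mathcal{R}^1,\mathcal{R}^\infty]_\theta=\mathcal{R}^p$ with $\theta=1-\tfrac1p$, while on the target $[L^1(\R^{2d},\S^2),L^\infty(\R^{2d},\S^2)]_\theta=L^p(\R^{2d},\S^2)$ by complex interpolation of Bochner--Lebesgue spaces. The interpolation theorem for linear operators then yields
\[
\|\chi_\Omega\cdot\V_\w\r\|_{L^p(\R^{2d},\S^2)}\le C_\Omega^{\,1-\theta}\cdot 1^{\,\theta}\,\|\V_\w\r\|_{L^p(\R^{2d},\S^2)}=C_\Omega^{1/p}\,\|\V_\w\r\|_{L^p(\R^{2d},\S^2)},\qquad\r\in\M^p,
\]
and raising to the $p$-th power is precisely the asserted inequality; it suffices to run the argument on the dense subspace $\M^1\cap\M^\infty$ and pass to a limit.

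The step I expect to be the main obstacle is the last one: one has to ensure that the identification $[\mathcal{R}^1,\mathcal{R}^\infty]_\theta=\mathcal{R}^p$ is compatible with the $L^p$-norm, so that the two endpoint constants combine into $C_\Omega^{1/p}$ with no spurious $p$-dependent factor (such a factor would destroy the clean form of the bound, and hence of Theorems~\ref{thm:1}--\ref{thm:2}). This is where the co-orbit structure must be used: $\mathcal{R}^q$ is the range of the bounded reproducing-kernel projection $\V_\w\V_\w^\ast$ on $L^q(\R^{2d},\S^2)$, and the proof of Proposition~\ref{prop:interpolation} realizes the $\M^p$-scale as a Wiener amalgam scale on which interpolation separates into local and global components, so the constant can be threaded through the identification. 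A more hands-on alternative, avoiding abstract interpolation of the $\mathcal{R}^q$, is to start from $\V_\w\r=I_\mathcal{K}(\V_\w\widetilde\r)$, use $\|\V_\w\r(z)\|_{\S^2}\le\int\|\mathcal{K}(z,w)\|_{\mathrm{op}}\|\V_\w\widetilde\r(w)\|_{\S^2}\,dw$, and apply a Schur-type $L^p$-estimate to the scalar kernel $\|\mathcal{K}(z,w)\|_{\mathrm{op}}$ together with the $L^1$-control $\theta_\mathcal{K}$ of $I_\mathcal{K}^{-1}$; here too the only non-routine point is verifying that the constants recombine exactly as claimed.
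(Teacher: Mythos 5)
Your proposal follows essentially the same route as the paper: the $p=1$ case is Proposition~\ref{prop:abstract-LS}, the $p=\infty$ case is the trivial pointwise bound, and the general case follows by complex interpolation via Proposition~\ref{prop:interpolation}, with the endpoint constants combining as $C_\Omega^{1-\theta}\cdot 1^{\theta}=C_\Omega^{1/p}$. The only difference is that you spell out (and flag as delicate) the compatibility of the interpolation identification with the $L^p$-norms on the range spaces, a point the paper's proof leaves implicit.
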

\begin{proof}
    Proposition~\ref{prop:abstract-LS} proves the case \(p=1\).  For \(p=\infty\) we clearly have
    \begin{align*}
        \sup_{z\in\R^{2d}} \|\chi_\Omega(z)\cdot\V_\w\r(z)\|_{\S^2}=\sup_{z\in\R^{2d}}\chi_\Omega(z)\cdot \|\V_\w\r(z)\|_{\S^2}\leq \sup_{z\in\R^{2d}} \|\V_\w\r(z)\|_{\S^2}.
    \end{align*} Thus we also have an estimate for \(\A^\infty.\) Now we apply complex interpolation (Proposition~\ref{prop:interpolation}) to get the result.
\end{proof}

\begin{remark}
    This proof strategy also works  when the characteristic function \(\chi_{\Omega}\) is replaced by a general weight function \(F\in L^{\infty}(\R^{2d})\). In this case, however, one picks up an additional constant on the right-hand side, since $$\sup_{z\in\R^{2d}}\|F(z)\V_\w\r(z)\|_{\S^2}\leq \|F\|_{\infty}\cdot\sup_{z\in\R^{2d}}\|\V_\w\r(z)\|_{\S^2}.$$ In particular, the correct inequality is as follows \begin{align*}
                \frac{\|F\cdot\V_\w\r\|_{L^p(\R^{2d},\S^2)}^p}{\|\V_\w\r\|_{L^p(\R^{2d},\S^2)}^p}\leq \theta_\mathcal{K}\cdot\|F\|_{\infty}^{p-1}\cdot\sup_{w\in \R^{2d}}\int_{\R^{2d}} \left\|\mathcal{K}(z,w)\right\|_{\mathrm{op}}F(z)dz.
    \end{align*}
\end{remark}
\begin{remark}\label{rem:mixed-state}
From the observation \eqref{eq:norm-S^2=norm-L^2}, we infer that Proposition~\ref{QuantumSieve} also provides the following estimate for $f\in M^p(\R^d)$
\begin{align*}
        \frac{\|\chi_{\Omega}\cdot\V_\w f\|_{L^p(\R^{2d},L^2(\R^d))}^p}{\|\V_\w f\|_{L^p(\R^{2d},L^2(\R^d))}^p}\leq \theta_\mathcal{K} \cdot\sup_{w\in \R^{2d}}\int_\Omega \left\|\mathcal{K}(z,w)\right\|_{\mathrm{op}} dz.
\end{align*} 
Finally, the discussion in Section~\ref{sec:S^2=L^2} shows that  the first eigenvalue of the
mixed-state localization operator  $A_\Omega^{\w\w^\ast}:L^2(\R^d)\to L^2(\R^d)$ satisfies 
$$
\lambda_1\big(A_\Omega^{\w\w^\ast}\big)\leq \theta_\mathcal{K} \cdot\sup_{w\in \R^{2d}}\int_\Omega \left\|\mathcal{K}(z,w)\right\|_{\mathrm{op}} dz.
$$
This estimate holds for quite general sets \(\Omega\), operators \(\w\) and kernels \(\mathcal{K}\) and as such, it is not optimal.
\end{remark}
So far, our large sieve estimate is  rather abstract. The following section  will therefore be concerned with studying  different kernels $\mathcal{K}$ and deriving concrete estimates of the corresponding constant $\theta_\mathcal{K}$.

\section{Estimating the constant \texorpdfstring{$\theta_\mathcal{K}$}{theta-K}}\label{sec:estimating-theta}

\subsection{Proof of Theorem \ref{thm:2}}\label{sec:reproducing-kernel}

We first note that, if we use the reproducing kernel $K_\w$ as the integral kernel, we immediately derive the following result.  
\begin{corollary}
    \label{QuantumSieve_1}
Let \(p\in[1,\infty)\), \(\w\in \A^1\) with \(\|\w\|_{\S^2}=1\) and let \(\Omega\subset\R^{2d}\) be measurable. For \(\r\in \A^p\), we have\begin{align*}
        \frac{\|\chi_{\Omega}\cdot\V_\w\r\|_{L^p(\R^{2d},\S^2)}^p}{\|\V_\w\r\|_{L^p(\R^{2d},\S^2)}^p}\leq \sup_{w\in \R^{2d}}\int_\Omega \left\|\w^*\pi(z)^*\pi(w)\w\right\|_{\mathrm{op}}dz.
\end{align*} 
\end{corollary}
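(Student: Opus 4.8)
The plan is to apply Proposition~\ref{QuantumSieve} with the specific choice of kernel $\mathcal{K}(z,w) = K_\w(z,w) = \w^\ast\pi(z)^\ast\pi(w)\w$ (recall $\|\w\|_{\S^2}=1$, so the normalization constant $\|\w\|_{\S^2}^{-2}$ is trivial). The only thing that needs to be verified is that this $\mathcal{K}$ satisfies the hypotheses of Proposition~\ref{prop:abstract-LS}, namely that the associated integral operator $I_{\mathcal{K}}$ is bounded and boundedly invertible on $\V_\w(\A^1)$, and that the resulting constant $\theta_{\mathcal{K}}$ equals $1$.

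First I would observe that, by Proposition~\ref{prop:op-stft}(4), $K_\w$ is precisely the reproducing kernel of the range space $\V_\w(\S^2)$, so the integral operator $I_{K_\w}$ acts as the identity on $\V_\w(\S^2)$: for any $\r\in\S^2$,
\begin{align*}
I_{K_\w}(\V_\w\r)(z) = \int_{\R^{2d}} K_\w(z,w)\V_\w\r(w)\,dw = \V_\w\r(z).
\end{align*}
Restricting to $\r\in\A^1$, this shows $I_{K_\w}$ is the identity on $\V_\w(\A^1)$, hence trivially bounded and boundedly invertible there, so the hypothesis of Proposition~\ref{prop:abstract-LS} is met. Moreover the associated $\widetilde{\r}$ from the proof of Proposition~\ref{prop:abstract-LS} is just $\widetilde{\r} = \r$, so
\begin{align*}
\theta_{K_\w} = \sup_{\r\in\A^1}\frac{\|\V_\w\r\|_{L^1(\R^{2d},\S^2)}}{\|I_{K_\w}(\V_\w\r)\|_{L^1(\R^{2d},\S^2)}} = \sup_{\r\in\A^1}\frac{\|\V_\w\r\|_{L^1(\R^{2d},\S^2)}}{\|\V_\w\r\|_{L^1(\R^{2d},\S^2)}} = 1.
\end{align*}
Plugging $\theta_{K_\w}=1$ and $\mathcal{K}=K_\w$ into the conclusion of Proposition~\ref{QuantumSieve} gives exactly the claimed inequality, with the operator-norm integrand $\|\w^\ast\pi(z)^\ast\pi(w)\w\|_{\mathrm{op}}$.

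I do not anticipate a serious obstacle here, since this is essentially an immediate specialization of the general principle; the only mild subtlety worth a sentence is checking that the reproducing formula of Proposition~\ref{prop:op-stft}(4), stated for $\S^2 = \A^2$, indeed restricts to $\A^1$ in the pointwise sense needed in \eqref{eq:ass-K} — this follows because $\A^1\hookrightarrow\A^2$ and the integral $\int_{\R^{2d}}K_\w(z,w)\V_\w\r(w)\,dw$ converges in $\S^2$ for each fixed $z$ by Cauchy–Schwarz together with $\V_\w\r\in L^1\cap L^\infty\subset L^2$. With that remark in place, the corollary follows directly.
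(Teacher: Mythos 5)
Your proposal is correct and follows exactly the paper's route: the paper's proof consists of the single observation that taking $\mathcal{K}=K_\w$ gives $\theta_{\mathcal{K}}=1$ (via the reproducing property of Proposition~\ref{prop:op-stft}(4)) and then invoking Proposition~\ref{QuantumSieve}. Your additional remarks spelling out why $I_{K_\w}$ is the identity on $\V_\w(\A^1)$ and why $\theta_{K_\w}=1$ are accurate elaborations of what the paper leaves implicit.
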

 \begin{proof}
     If we use $\mathcal{K}=K_\w$, then $\theta_\mathcal{K}=1$ and the statement follows from Proposition~\ref{QuantumSieve}.
 \end{proof}
Now, observe that if the operator window is a rank-one operator \(g\otimes g\) with \(g\in M^1\), then the operator norm of the kernel simplifies to the absolute value of the kernel of the scalar valued STFT, i.e.,
\begin{align*}
        \frac{\|\chi_{\Omega}\cdot\V_{g\otimes g}\r\|_{L^p(\R^{2d},\S^2)}^p}{{\|\V_{g\otimes g}}\r\|_{L^p(\R^{2d},\S^2)}^p}\leq \sup_{w\in \R^{2d}}\int_\Omega |\langle \pi(w)g,\pi(z)g\rangle|dz=\sup_{w\in \R^{2d}}\int_\Omega |K_g(z,w)|dz.
\end{align*}
We note that although the operator norm of the kernel gives the best estimate, it is   in many situations   easier to estimate the right-hand side of Proposition~\ref{QuantumSieve} by a Schatten norm since one can exploit the trace properties to get results that are easier to interpret or are even explicit. In this case, estimating the right-hand side using the Hilbert-Schmidt norm, and writing \(\sigma=\w\w^*\), we have by Lemma~\ref{Identity}
\begin{equation}
  \|\w^*\pi(z)^*\pi(w)\w\|_{\mathrm{op}}\leq   \|\w^*\pi(z)^*\pi(w)\w\|_{\S^2}=\|\V_\w\w(z-w)\|_{\S_2}=\sqrt{\widecheck{\sigma}\star  {\sigma}(z-w)},\label{eq:HS=conv}
\end{equation}
an estimate that we will leverage in the following example, which completes the proof of Theorem \ref{thm:2}.

\begin{exmp}
Let $d=1$ and  consider  the Weyl transform  $L_{g_a}$ of the Gaussian  \(g_a\), $a>0$, defined by \(g_a(t)=\frac{2}{1+2a}e^{-\frac{2\pi}{1+2a}|t|^2},\ t\in\R^2,\) which has the spectral decomposition \begin{align*}
    L_{g_a}=\frac{1}{a+1}\sum_{n\in\N_0} \left(\frac{a}{a+1}\right)^n (h_n\otimes h_n).
\end{align*} 
Note that the normalization of $g_a$ is chosen such that $\|g_a\|_1=\|L_{g_a}\|_{\S^1}=1. $
    Now take  \(\w=\sqrt{L_{g_a}}=\frac{1}{\sqrt{1+a}}\sum_{n\in \N_0}\big(\frac{a}{a+1}\big)^{n/2}(h_n\otimes h_n)\), so that \(\w\w^*=L_{g_a}\) gives us the Cohen's classes \(W(f,f)*g_a\), which are popular choices in the literature, due to their positivity \cite{Grochenig,deBruijn,Yvon}.
Instead of trying to compute the operator norm of the expansion \eqref{eq:kernel-written-out} we instead use \eqref{eq:HS=conv}. Together with \eqref{eq:convolution-op-vs-symbol} and the semigroup property of the Gaussian  this yields the following nice explicit expression for the Hilbert-Schmidt norm of the reproducing kernel
\begin{align*}
    \|\w^*\pi(z)^*\pi(w)\w\|_{\S^2}&=\sqrt{\widecheck{L_{g_a}}\star  {L_{g_a}}(z-w)}=\sqrt{L_{g_a}\star L_{g_a}(z-w)}\\
    &=\sqrt{g_a*g_a(z-w)}=
    \frac{1}{ \sqrt{1+2a}}e^{-\frac{\pi}{2(1+2a)} |z-w|^2}.
    \end{align*}
    Setting $w=0$ above, we see that $\V_\gamma\gamma\in L^1(\R^2,\S^2)$, that is $\gamma$ is admissible and we may apply Proposition~\ref{prop:abstract-LS}
to derive  the estimate
\begin{align*}
        \frac{\|\chi_{\Omega}\cdot\V_\w\r\|_{L^p(\R^{2},\S^2)}^p}{\|\V_\w\r\|_{L^p(\R^{2d},\S^2)}^p}\leq \frac{1}{\sqrt{1+2a}} \cdot\sup_{w\in \R^{2}}\int_\Omega  e^{-\frac{\pi}{2(1+2a)} |z-w|^2} dz\leq {2\sqrt{1+2a} \left(1-e^{-\frac{|\Omega |}{ {2(1+2a)}}}\right)},
\end{align*} 
where in the last step we used that the Gaussian integral is maximized for $\Omega$ being a ball centered at $w$. In other words, we have established Theorem~\ref{thm:2}.
\end{exmp}

\subsection{Local reproducing formulas and proof of Theorem~\ref{thm:1}}\label{LocalRep}

In order to interpret Proposition~\ref{QuantumSieve} as a measure of the sparsity of \(\Omega\) we will derive and employ a generalization of the local reproducing formula announced in the introduction (which holds for polyradial operators and Reinhardt domains) for the operator STFT. 
An ingredient of the proof is the following result from \cite[Lemma~5.2]{svela}, which generalized \cite[Proposition~6]{LargeSieve} to higher dimensions.
\begin{lemma}\label{MyDblOrth}
    Let $W\subset \R_{\geq 0}^d$ be bounded,  let $\Delta\subset\R^{2d}$ be a Reinhardt domain with Reinhardt shadow $W$, and let $m\in\N_0^d$. Then, the short-time Fourier transforms of the Hermite functions \(\{V_{h_m}h_n\}_{n\in\N_0^d}\) are orthogonal with respect to $\chi_{\Delta}(z)dz$, i.e., 
    \begin{align}    \int_{\Delta}V_{h_m}h_n(z)\overline{V_{h_m}h_k(z)}dz=C_{n,m}(\Delta)\cdot\delta_{n,k}\qquad n,k\in\N_0^d,
    \end{align}
    where \begin{align}\label{eq:def-C_nm}
       C_{n,m}(\Delta)=(2\pi)^d\frac{m!}{n!}\pi^{|n-m|}\int_W r^{2n-2m+1}e^{-\pi |r|^2} \left(\prod_{j=1}^dL_{m_j}^{n_j-m_j}(\pi r_j^2)\right)^2 dr.
    \end{align}
\end{lemma}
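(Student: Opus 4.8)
\textbf{Proof plan for Lemma~\ref{MyDblOrth}.}

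The plan is to reduce the $2d$-dimensional statement to the one-dimensional case via the tensor-product structure of the Hermite functions and Laguerre polynomials, and then to carry out the one-dimensional computation in polar coordinates using the Laguerre connection formula. First I would invoke the Laguerre connection lemma (together with its stated extension to higher dimensions): writing $z=(x,\omega)$ in polyradial coordinates $x_j=r_j\cos\theta_j$, $\omega_j=r_j\sin\theta_j$, we have
\begin{align*}
V_{h_m}h_n(z)=\prod_{j=1}^d V_{h_{m_j}}h_{n_j}(x_j,\omega_j)=\prod_{j=1}^d \sigma_{n_j,m_j}(r_j)\,e^{i(m_j-n_j)\theta_j}\,e^{-\pi i x_j\omega_j},
\end{align*}
where $\sigma_{n_j,m_j}(r_j)=\sqrt{\tfrac{m_j!}{n_j!}\pi^{n_j-m_j}}\,r_j^{n_j-m_j}L_{m_j}^{n_j-m_j}(\pi r_j^2)e^{-\pi r_j^2/2}$. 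The scalar factors $e^{-\pi i x_j\omega_j}$ have modulus one and cancel in the product $V_{h_m}h_n(z)\overline{V_{h_m}h_k(z)}$, leaving only the radial amplitudes and the phase factors $e^{i(k_j-n_j)\theta_j}$.

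The second step is to use that $\Delta$ is Reinhardt with shadow $W$, so $\Delta=W\times\T^d$ in polyradial coordinates and the Lebesgue measure factors as $dz=\prod_{j=1}^d r_j\,dr_j\,d\theta_j$. Hence the integral over $\Delta$ splits as a product over $j$ of integrals, each of the form
\begin{align*}
\int_0^{2\pi} e^{i(k_j-n_j)\theta_j}\,d\theta_j \;=\; 2\pi\,\delta_{n_j,k_j},
\end{align*}
times a radial integral $\int$ over $W$ (which does not factor further, but that is harmless). The angular integrals force $n_j=k_j$ for every $j$, i.e.\ $n=k$, which produces the Kronecker delta $\delta_{n,k}$. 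When $n=k$, the product of angular integrals contributes $(2\pi)^d$, and the surviving radial part is
\begin{align*}
\int_W \prod_{j=1}^d \Big(\sigma_{n_j,m_j}(r_j)^2\, r_j\Big)\,dr
= \int_W \prod_{j=1}^d\Big(\tfrac{m_j!}{n_j!}\pi^{n_j-m_j} r_j^{2(n_j-m_j)}L_{m_j}^{n_j-m_j}(\pi r_j^2)^2 e^{-\pi r_j^2}\,r_j\Big)\,dr,
\end{align*}
which upon collecting the constants $\prod_j \tfrac{m_j!}{n_j!}=\tfrac{m!}{n!}$, $\prod_j\pi^{n_j-m_j}=\pi^{|n-m|}$, and $\prod_j r_j^{2(n_j-m_j)}=r^{2n-2m}$ (with one extra factor $r_j$ from the Jacobian giving $r^{2n-2m+1}$) is exactly the claimed formula \eqref{eq:def-C_nm}. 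I would also remark that the reflection identity \eqref{Reflection} is what makes the expression meaningful when some $n_j<m_j$, so no separate case analysis is needed.

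I do not anticipate a genuine obstacle here: the argument is essentially a bookkeeping exercise once the Laguerre connection formula and the Reinhardt factorization $\Delta=W\times\T^d$ are in hand. The one point requiring a little care is justifying that the integrals converge and may be separated — this follows from $W$ being bounded and the Gaussian decay $e^{-\pi|r|^2}$, so Fubini applies without difficulty. Since the statement is quoted from \cite[Lemma~5.2]{svela}, the cleanest presentation is to cite that reference for the full details and include only the short derivation sketched above.
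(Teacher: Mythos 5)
Your proposal is correct and is essentially the standard argument behind this result: the paper itself gives no proof but simply cites \cite[Lemma~5.2]{svela} (which in turn generalizes \cite[Proposition~6]{LargeSieve}), and the computation there proceeds exactly as you describe — Laguerre connection in polyradial coordinates, factorization $\Delta=W\times\T^d$, angular integrals producing $(2\pi)^d\delta_{n,k}$, and the surviving radial integral over $W$ giving $C_{n,m}(\Delta)$. Your bookkeeping of the constants, the cancellation of the unimodular factors $e^{-\pi i x_j\omega_j}$, and the remark that the radial integral over $W$ need not factor further are all accurate, so there is nothing to correct.
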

The above lemma shows that the following identity holds weakly
$$
\int_{\Delta}\langle  h_n,\pi(w)h_m\rangle\pi(w){h_m}dw=C_{n,m}(\Delta) h _n.
$$
If we then argue as in \cite[Theorem~1]{LargeSieve}, we deduce that
\begin{equation}\label{eq:rep-form-weak}
    \int_{z+\Delta}\langle  \pi(z)h_n,\pi(w)h_m\rangle\pi(w){h_m}dw=C_{n,m}(\Delta) \cdot\pi(z)h_n
\end{equation}
 holds weakly. Therefore,
  we derive the following local reproducing formula
\begin{equation}
V_{h_n}f(z)=   \frac{1}{C_{n,m}(\Delta)} \int_{z+\Delta}V_{h_m}f(w)\langle  \pi(w)h_m,\pi(z)h_n\rangle dw   ,\qquad z\in\R^{2d}.
\end{equation}
Note that here, the value of the STFT with window $h_n$ at a point $z$ is recovered from the values of the STFT with window $h_m$ in a neighborhood of $z$. Compare this to \cite[Theorem~1]{LargeSieve} where the $V_{h_n}f(z)$ is recovered from the local information on  $V_{h_n}f$ around $z$.

We are now ready to state our operator analogue of the local reproducing formula for the STFT. Unlike the classical case, where we reproduce the STFT precisely, the quantum local reproducing formula yields the value of an operator STFT of the same signal, but with a different operator window. For the classical STFT, the Hermite functions are the particular windows that admit a local reproducing formula. In the operator case, the Hermite functions are replaced by the class of \textit{polyradial operators}, that is, the operators that admit a diagonal decomposition in terms of the Hermite basis: \(\w=\sum_{n\in \N_0^d}\lambda_n (h_n\otimes h_n)\).
\begin{prop}[Quantum local reproducing formula]\label{prop:quant-loc-rep}
    Let \(\Delta\subset\R^{2d}\) be a Reinhardt domain, and let \(\w=\sum_{n\in \N_0^d}\lambda_n( h_n\otimes h_n)\) be a polyradial operator with \(\|\w\|_{\S^2}=1\). 
    For any  \(z\in \R^{2d}\), we have
    \begin{align}\label{eq:weak-loc-rep}
        \int_{z+\Delta}K_\w(z,w)\w^\ast\pi(w)^\ast dw= \big(\widetilde{\w}(\Delta)\big)^\ast \pi(z)^\ast,
    \end{align}
    where 
    $$
    \widetilde{\w}(\Delta)=\sum_{m\in \N_0^d} \lambda_m A_m(\Delta)(h_m\otimes h_m),
    $$
    and
    \begin{align*}
        A_m(\Delta)=\sum_{n\in \N_0^d}|\lambda_n|^2 C_{m,n}(\Delta),
    \end{align*} and \(C_{m,n}(\Delta)\) is given by \eqref{eq:def-C_nm}. In particular, for every $\r\in \S^2$  
    one has
    \begin{equation}\label{eq:loc-rep}
  \V_{\widetilde{\w}(\Delta)}\r(z)=   \int_{z+\Delta}K_\w(z,w)\V_\w\r(w)dw.
    \end{equation}
\end{prop}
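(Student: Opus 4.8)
The plan is to reduce the operator statement \eqref{eq:weak-loc-rep} to the scalar double-orthogonality relation \eqref{eq:rep-form-weak} provided by Lemma~\ref{MyDblOrth}, by expanding the reproducing kernel $K_\w(z,w)$ in the Hermite basis and integrating term by term. First I would write out the kernel using the polyradial decomposition $\w=\sum_{n\in\N_0^d}\lambda_n(h_n\otimes h_n)$: from \eqref{eq:kernel-written-out} (with $f_n=g_n=h_n$), one has
\begin{equation*}
K_\w(z,w)=\w^\ast\pi(z)^\ast\pi(w)\w=\sum_{m,n\in\N_0^d}\lambda_n\lambda_m\langle\pi(z)h_n,\pi(w)h_m\rangle\,(h_m\otimes h_n),
\end{equation*}
using $\|\w\|_{\S^2}=1$. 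Likewise $\w^\ast\pi(w)^\ast=\sum_{k\in\N_0^d}\lambda_k\,(h_k\otimes \pi(w)h_k)$ — i.e. the operator sending $\phi\mapsto\sum_k\lambda_k\langle\phi,\pi(w)h_k\rangle h_k$. Composing the two rank-one-summand expansions, the product $K_\w(z,w)\,\w^\ast\pi(w)^\ast$ becomes a triple sum over $m,n,k$; the inner-product pairing $(h_m\otimes h_n)(h_k\otimes\pi(w)h_k)$ collapses via $\langle h_k,h_n\rangle=\delta_{k,n}$ to leave $\sum_{m,n}\lambda_m\lambda_n^2\,\langle\pi(z)h_n,\pi(w)h_m\rangle\,(h_m\otimes\pi(w)h_n)$.

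Next I would integrate over $z+\Delta$ and push the integral inside the (absolutely convergent, by admissibility and the decay of the $\lambda_n$) sum. For fixed $m,n$ the relevant object is $\int_{z+\Delta}\langle\pi(z)h_n,\pi(w)h_m\rangle\,\pi(w)h_m\,dw$ — acting on the "second slot" — which is exactly the weak identity \eqref{eq:rep-form-weak}, equal to $C_{n,m}(\Delta)\,\pi(z)h_n$. Substituting this back in, the triple expression becomes
\begin{equation*}
\sum_{m,n\in\N_0^d}\lambda_m\lambda_n^2\,C_{n,m}(\Delta)\,(h_m\otimes\pi(z)h_n)
=\sum_{m\in\N_0^d}\lambda_m\Big(\sum_{n\in\N_0^d}|\lambda_n|^2 C_{n,m}(\Delta)\Big)(h_m\otimes\pi(z)h_n)\big|_{\text{reassembled}},
\end{equation*}
and recognizing the bracket as $A_m(\Delta)$ and reassembling $\sum_m\lambda_m A_m(\Delta)(h_m\otimes\pi(z)h_m)=\widetilde{\w}(\Delta)^\ast\pi(z)^\ast$ gives \eqref{eq:weak-loc-rep}. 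Note the symmetry $C_{n,m}(\Delta)=C_{m,n}(\Delta)$ is not needed — the definition of $A_m$ in the statement already matches $\sum_n|\lambda_n|^2 C_{n,m}(\Delta)$ after renaming. Finally, \eqref{eq:loc-rep} follows from \eqref{eq:weak-loc-rep} by applying both sides to $\r\in\S^2$ on the left (i.e. composing with $\r$), since $\V_\w\r(w)=\w^\ast\pi(w)^\ast\r$ and $\V_{\widetilde{\w}(\Delta)}\r(z)=\widetilde{\w}(\Delta)^\ast\pi(z)^\ast\r$, and because $K_\w(z,w)\V_\w\r(w)=\big(K_\w(z,w)\w^\ast\pi(w)^\ast\big)\r$.

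The main obstacle I anticipate is making the term-by-term integration rigorous: one must justify interchanging $\int_{z+\Delta}$ with the double sum, and interpret \eqref{eq:rep-form-weak} (which holds \emph{weakly}) correctly when it is plugged into an operator identity rather than tested against a single vector. I would handle this by first establishing \eqref{eq:weak-loc-rep} in the weak sense — pairing both sides with vectors $\phi,\psi\in\mathscr{S}(\R^d)$, so that everything reduces to scalar integrals where Fubini and the Hermite expansion of $\mathscr{S}$-functions apply — and controlling the tails using $\|\w\|_{\S^2}^2=\sum_n|\lambda_n|^2<\infty$ together with the uniform bound $|C_{n,m}(\Delta)|\le|\Delta|$ (each $C_{n,m}(\Delta)$ is bounded by the measure of $\Delta$, as it is an integral of $|V_{h_m}h_n|^2$ over $\Delta$) and the rapid decay of $|V_{h_m}h_n(z)|$ in $n$. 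The passage to \eqref{eq:loc-rep} for general $\r\in\S^2$ is then routine once the operator identity \eqref{eq:weak-loc-rep} is known on a dense domain and both sides are seen to be bounded in the appropriate topology.
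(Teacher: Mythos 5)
Your strategy is exactly the paper's: expand $K_\w(z,w)\w^\ast\pi(w)^\ast$ in the Hermite basis using the polyradial structure, collapse the triple sum by orthonormality, integrate term by term over $z+\Delta$ using the scalar identity \eqref{eq:rep-form-weak} (which comes from Lemma~\ref{MyDblOrth}), and reassemble; \eqref{eq:loc-rep} then follows by right-multiplication with $\r$. So there is no difference in route, and your remarks on justifying the interchange of sum and integral (testing against Schwartz vectors, $\sum_n|\lambda_n|^2<\infty$, $0\le C_{m,n}(\Delta)\le|\Delta|$) are sound and in fact more careful than the published proof, which performs the interchange silently.

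However, the computation as displayed does not close, because of an index/conjugation slip you should repair. The correct expansion is
\begin{equation*}
K_\w(z,w)\,\w^\ast\pi(w)^\ast=\sum_{m,n\in\N_0^d}\overline{\lambda_m}\,|\lambda_n|^2\,\langle\pi(w)h_n,\pi(z)h_m\rangle\,\big(h_m\otimes\pi(w)h_n\big);
\end{equation*}
note the arguments of the inner product --- \eqref{eq:kernel-written-out}, which you quote, has $z$ and $w$ transposed there, as one checks already in the rank-one case. The $w$-dependent vector in the second tensor slot is $\pi(w)h_n$, not $\pi(w)h_m$, so the integral you isolate, $\int_{z+\Delta}\langle\pi(z)h_n,\pi(w)h_m\rangle\pi(w)h_m\,dw$, is not the one occurring in your summand; and your final display with $(h_m\otimes\pi(z)h_n)$ cannot be ``reassembled'' into $\sum_m\lambda_mA_m(\Delta)(h_m\otimes\pi(z)h_m)$ because an $n$ sits where an $m$ is needed. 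The fix is to use the antilinearity of the second tensor slot to move the scalar inside with a conjugation: $\langle\pi(w)h_n,\pi(z)h_m\rangle(h_m\otimes\pi(w)h_n)=h_m\otimes\big(\langle\pi(z)h_m,\pi(w)h_n\rangle\pi(w)h_n\big)$. Then the inner $w$-integral is literally $\int_{z+\Delta}\langle\pi(z)h_m,\pi(w)h_n\rangle\pi(w)h_n\,dw=C_{m,n}(\Delta)\,\pi(z)h_m$ by \eqref{eq:rep-form-weak}, the second slot comes out carrying the index $m$, and summing over $n$ produces $A_m(\Delta)=\sum_n|\lambda_n|^2C_{m,n}(\Delta)$ exactly as in the statement --- no appeal to the symmetry $C_{m,n}=C_{n,m}$ and no relabelling is needed. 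With that correction your argument coincides with the paper's proof.
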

\begin{proof}
   Let us first write out $K_\w(z,w)\w^\ast\pi(w)^\ast$ explicitly:
   \begin{align*}
       K_\w(z,w)\w^\ast\pi(w)^\ast&=\w^*\pi(z)^\ast\pi(w)\w\w^{\ast}\pi(w)^\ast
      \\ & =\w^*\pi(z)^\ast\pi(w)\left(\sum_{n\in \N_0^d} |\lambda_n|^2 h_n\otimes( \pi(w)h_n) \right)
       \\
       &=\w^* \left(\sum_{n\in \N_0^d} |\lambda_n|^2 \big(\pi(z)^\ast\pi(w)h_n\big)\otimes \big(\pi(w)h_n\big) \right)
       \\
       &=\sum_{m,n\in \N_0^d} \overline{\lambda_m} |\lambda_n|^2 \langle \pi(w)h_n,\pi(z)h_m\rangle\big( h_m\otimes (\pi(w)h_n) \big)
     \\     &=\sum_{m\in \N_0^d} \overline{\lambda_m}\, h_m\otimes \left(\sum_{n\in \N_0^d}  |\lambda_n|^2 \langle \pi(z)h_m,\pi(w)h_n\rangle\, \pi(w)h_n  \right).
   \end{align*}
   The last equality holds due to our convention of the tensor product being antilinear in the second argument. Integrating over $z+\Delta$ thus yields by \eqref{eq:rep-form-weak}
      \begin{align*}
     \int_{z+\Delta}  K_\w(z,w)\w^\ast\pi(w)^\ast dw&= 
  \sum_{m\in \N_0^d} \overline{\lambda_m}\, h_m\otimes \left( \sum_{n\in \N_0^d}   |\lambda_n|^2 \int_{z+\Delta}\langle \pi(z)h_m,\pi(w)h_n\rangle\, \pi(w)h_n  dw\right)
  \\
  &= 
 \sum_{m\in \N_0^d} \overline{\lambda_m}\, h_m\otimes \left( \sum_{n\in \N_0^d}   |\lambda_n|^2 C_{m,n}(\Delta)\pi(z)h_m \right)
    \\
  &= 
  \sum_{m\in \N_0^d} \overline{\lambda_m} A_m(\Delta)\,h_m\otimes  \pi(z)h_m  
  =\big(\widetilde{\w}(\Delta)\big)^\ast\pi(z)^\ast.
   \end{align*}
   The identity \eqref{eq:loc-rep} follows if we multiply both sides of \eqref{eq:weak-loc-rep} from the right by $\r.$
\end{proof}
\begin{remark}
    While the operator window \(\widetilde{\w}(\Delta)\) is in general neither equal to   nor a scalar multiple of \(\w\), it is still a polyradial operator. In the special case where $\w=h_n\otimes h_n$  is a rank-one operator, then $A_m(\Delta)=\delta_{n,m}C_{m,n}(\Delta)$ and $\widetilde{\w}(\Delta)=C_{n,n}(\Delta)\w$. This leads to a local reproducing formula reminiscent of the original version obtained in   \cite{LargeSieve} 
    \begin{align}\label{local-rep-rank1}
            \V_{h_n\otimes h_n}\r(z)=\frac{1}{C_{n,n}(\Delta)}\int_{z+\Delta}\langle \pi(w)h_n,\pi(z)h_n\rangle   \V_{h_n\otimes h_n}\r(w)dw.
    \end{align}
    In this case, the constant \(A_m(\Delta)\) is the  eigenvalue of the classical localization operator \(A_{\Delta}^{h_n}=\chi_{\Delta}\star h_n\otimes h_n\) corresponding to the \(m\)-th Hermite function. In general, \(A_m(\Delta)\) coincides with the eigenvalue of the mixed-state localization operator \(A_{\Delta}^{\w\w^*}=\chi_{\Delta}\star \w\w^*\) that  corresponds to the \(m\)-th Hermite function.
\end{remark}
In order to apply Proposition~\ref{QuantumSieve} we need to compute an upper bound of 
\begin{equation}\label{eq:theta_k-polyradial}
\theta_{\mathcal{K}}=\sup_{\r\in\A^1}\frac{\|\V_{{\w}} \r\|_{L^1(\R^{2d},\S^2)}}{\|\V_{\widetilde{\w}(\Delta)} \r\|_{L^1(\R^{2d},\S^2)}}
\end{equation} 
where the denominator can be written as
\begin{align*}
   \|\V_{\widetilde{\w}(\Delta)} \r\|_{L^1(\R^{2d},\S^2)} &=\int_{\R^{2d}}\left\|\sum_{m\in \N_0^d} \overline{\lambda_m}A_m(\Delta)\V_{h_m\otimes h_m}\r(z) \right\|_{\S^2} dz 
   \\
   &=\int_{\R^{2d}}\left(\sum_{m\in \N_0^d} |\lambda_m|^2A_m(\Delta)^2\|\V_{h_m\otimes h_m}\r(z)\|_{\S^2}^2\right)^{1/2}dz.
\end{align*}
There is no one-fits-all  recipe for how to bound \eqref{eq:theta_k-polyradial} as the spectral structure of $S$ critically influences the value of $\theta_\mathcal{K}$. Nevertheless, we present several strategies that may be useful.
First, for
\begin{equation}
B(\Delta)=\inf\big\{A_m(\Delta):\ m\in\N_0^d,\ \lambda_m A_m(\Delta)\neq 0\big\},
\end{equation}
one deduces  that 
\begin{align*}
 \|\V_{\widetilde{\w}(\Delta)} \r\|_{L^1(\R^{2d},\S^2)} 
 &\geq B(\Delta)\cdot\int_{\R^{2d}}\left(\sum_{m\in \N_0^d}|\lambda_m|^2\|\V_{h_m\otimes  h_m}\r(z)\|_{\S^2}^2\right)^{1/2}dz \\&=B(\Delta)\cdot\|\V_{{\w}} \r\|_{L^1(\R^{2d},\S^2)},
\end{align*}
and consequently 
\begin{align}\label{eq:theta-B}
\theta_\mathcal{K}\leq B(\Delta)^{-1}.
\end{align}
Note that in general $B(\Delta)$ might be zero.  If $\w$ is a finite rank polyradial operator, however, then $B(\Delta)>0$ is guaranteed.
In the following, we  consider particular examples of how to lower bound $B(\Delta)$  and discuss cases when \eqref{eq:theta-B} does not provide  a good estimate.

\begin{exmp}
Let  $d=1$ and $\Delta=D_R(0)$.  The simplest example of a polyradial operator with rank greater than one is the rank-two operator \(\w=\frac{1}{\sqrt{2}}(h_0\otimes h_0+h_1\otimes h_1)\). In this setting we can compute
\begin{align*}
B(D_R(0))&=\min_{m\in\{0,1\}}A_m(D_R(0))\end{align*}
explicitly.
Indeed, 
\begin{align*}
    A_0(D_R(0))&=\frac{1}{2}(C_{0,0}+C_{0,1})=\frac{1}{2}\int_0^{\pi R^2}  e^{-t} dt+ \frac{1}{2} \int_0^{\pi R^2} t e^{-t}   dt =1-\frac{2+\pi R^2  }{2}e^{-\pi R^2},
\end{align*}
while \begin{align*}
    A_1(D_R(0))&=\frac{1}{2}(C_{1,0}+C_{1,1})= \frac{1}{2}\int_0^{\pi R^2} te^{-t} dt+ \frac{1}{2} \int_0^{\pi R^2} \big(1-t\big) ^2 e^{-t}   dt \\ &=1-\frac{2+\pi R^2+\pi^2 R^4}{2} e^{-\pi R^2} .
\end{align*}
Since \(A_0(D_R(0))-A_1(D_R(0))=\frac{\pi^2 R^4}{2} e^{-\pi R^2} >0\) for all \(R>0\) we conclude that $$B(D_R(0))=A_1(D_R(0))=1-\frac{2+\pi R^2+\pi^2 R^4}{2} e^{-\pi R^2} .$$

\end{exmp}

\begin{exmp}\label{ex:thm1}  The estimate  for $\theta_\mathcal{K}$ that we will subsequently obtain is used to  prove Theorem~\ref{thm:1} at the end of this section.
Again assume that $d=1$, $\Delta=D_R(0)$, and consider $\w=\sum_{n=0}^N\lambda_n (h_n\otimes h_n)$, for some fixed $N\in \N$. In this example, we show how to lower bound $B(D_R(0))$ when $R$ is chosen according to the rank $N$. 

Let  $m\geq n$. By orthonormality, 
\begin{align*}
    1=\frac{n!}{m!}\int_0^\infty t^{m-n}\big(L_n^{m-n}(t)\big)^2e^{-t}dt,
\end{align*}
and we deduce that
$$
C_{n,m}(D_R(0))=\frac{n!}{m!}\int_0^{\pi R^2} t^{m-n}\big(L_n^{m-n}(t)\big)^2e^{-t}dt=1-\frac{n!}{m!}\int_{\pi R^2}^\infty t^{m-n}\big(L_n^{m-n}(t)\big)^2e^{-t}dt.
$$
It is therefore sufficient to provide an upper bound for the integral on the right-hand side.

The following basic estimate    was shown in  \cite[Lemma~2.4]{haimi-hedenmalm}  
 \begin{equation}\label{eq:haimi-hedenmalm}
    |L^{m-n}_n(t)|\leq \frac{t^n}{n!},\qquad \text{for }\ t\geq  \frac{n+m}{2}-1+ \sqrt{\tfrac{1}{4}+(n-1)(m-1)} .
    \end{equation}
If $\pi R^2\geq 2N\geq m+n$, for $m,n\leq N$, then   
   \begin{equation}\label{eq:power-of-ball}
    \frac{(\pi R^2)^k}{k!}=\frac{\pi R^2}{k}\, \frac{(\pi R^2)^{k-1}}{(k-1)!}\geq\frac{(\pi R^2)^{k-1}}{(k-1)!},\qquad k\in\{1,...,m+n\}.
    \end{equation}
  Using the recurrence relation $\Gamma(s+1,x)=s\Gamma(s,x)+x^se^{-x}$ for the  upper incomplete gamma function we observe that  
\begin{equation}\label{eq:incomplete-gamma}
    \Gamma(m,t)=(m-1)!\,e^{-t}\sum_{k=0}^{m-1}\frac{t^k}{k!},\qquad m\in\N.
\end{equation}
 Note that for  $t=\pi R^2\geq 2N$, the condition in  \eqref{eq:haimi-hedenmalm} is satisfied, which allows us to combine \eqref{eq:haimi-hedenmalm} \eqref{eq:power-of-ball} and \eqref{eq:incomplete-gamma}  to show  that for $\pi R^2\geq 2N$
    \begin{align*}
 \frac{n!}{m!}\int_{\pi R^2}^\infty t^{m-n}\big(L_n^{m-n}(t)\big)^2e^{-t}dt&\leq \frac{1}{n!m!}\int_{\pi R^2}^\infty t^{m+n}e^{-t}dt    =\frac{\Gamma(n+m+1,\pi R^2)}{n!m!}   
 \\
 &= \frac{(n+m)!}{n!m!}e^{-\pi R^2}\sum_{k=0}^{n+m}\frac{(\pi R^2)^k}{k!}\\
 &\leq \frac{(n+m)!}{n!m!}e^{-\pi R^2}(m+n+1)\frac{(\pi R^2)^{m+n}}{(m+n)!}
 \\
 &\leq (n+m+1)e^{-\pi R^2}\frac{(\pi R^2)^{m+n}}{n!m!} \leq (2N+1)e^{-\pi R^2}\frac{(\pi R^2)^{2N}}{(N!)^2}.
    \end{align*}
Altogether we thus have
\begin{align*}
B(D_R(0))&=\min_{m\in\{1,...,N\}} A_{m}(D_R(0))= \min_{m\in\{1,...,N\}} \sum_{n=0}^N|\lambda_n|^2C_{n,m}(D_R(0))\\&\geq  \min_{m\in\{1,...,N\}} \sum_{n=0}^N|\lambda_n|^2\Big(1-(2N+1)e^{-\pi R^2}\frac{(\pi R^2)^{2N}}{(N!)^2}\Big)\\ &=1-(2N+1)e^{-\pi R^2}\frac{(\pi R^2)^{2N}}{(N!)^2}.
\end{align*}
The final expression is not positive for all $\pi R^2\geq 2N$. However, if we set $\pi R^2=\alpha N$, then we obtain from an application of the pointwise Stirling bound $n!\geq  \sqrt{2\pi n}\left(\frac{n}{e}\right)^n$ (see, e.g., \cite{rob}) that 
    \begin{align}
    B(D_R(0))&\geq 1-(2N+1)e^{-\alpha N}\frac{(\alpha N)^{2N}}{(N!)^2}
 \notag\\ &  \ge 1-\frac{(2N+1)}{2\pi N}\alpha^{2N}e^{N(2-\alpha)} \ge 1-\frac{\alpha^{2N}}{2 } e^{N(2-\alpha)},\label{eq:B-estimate-polyradial}
    \end{align}
    which is greater than zero for $\alpha\geq 5$. 
\end{exmp}
\begin{remark}
    The above estimate in fact works for \(d>1\) as well. The crucial assumption is that the Reinhardt domain \(\Delta\) must be so large that it contains the entire oscillating region given in \eqref{eq:haimi-hedenmalm}. To this end, assuming that \(\Delta\) contains the polydisk of polyradius \((\alpha N,\alpha N,\dots,\alpha N)\) where \(\alpha \geq 5\) and  \( 2N> m_i+n_i \) for all \(i\in \{1,2,\dots d\}\) yields the estimate \begin{align*}
        \frac{n!}{m!}\int_{W^c} t^{n-m} \left(L_n^{m-n}(t)\right)^2e^{-t}dt\leq\left(\frac{2N+1}{2\pi N}\right)^de^{(2-\alpha)d N}\alpha^{2Nd},
    \end{align*}
    where \(W\) is the Reinhardt shadow of \(\Delta\).
\end{remark}

\begin{exmp}
Let again $d=1$, and consider the normalized projection operator $\w=\frac{1}{\sqrt{N}}\sum_{n=0}^{N-1}h_n\otimes h_n$. Then 
$$
A_m(D_R(0))=\frac{1}{N}\sum_{n=0}^{N-1}\frac{n!}{m!}\int_0^{\pi R^2} t^{m-n}\big(L_n^{m-n}(t)\big)^2e^{-t}dt.
$$
As $\{V_{h_m}h_n\}_{n\in\N_0}$ is an orthogonal basis for the reproducing kernel Hilbert space $V_{h_m}(L^2(\R))$ it follows that 
 $$
\sum_{n\in\N_0}V_{h_m}h_n(z)\overline{V_{h_m}h_n(w)}=\langle \pi(w)h_m,\pi(z)h_m\rangle. 
 $$
 In particular,  $$
\int_{D_R(0)}\sum_{n\in\N_0}\frac{n!}{m!}\int_0^{\pi R^2} t^{m-n}\big(L_n^{m-n}(t)\big)^2e^{-t}dt= \int_{D_R(0)}\sum_{n\in\N_0}|V_{h_m}h_n(z)|^2dz=\pi R^2.
 $$
 If we fix  
 $R$ and allow for the flexibility of choosing  $N$ then  there exists $N=N(R)$ such that 
 $$
  \int_{D_R(0)}\sum_{n=N}^\infty|V_{h_m}h_n(z)|^2dz\leq \frac{\pi R^2}{2},
 $$
 which implies that 
 $$
 B(D_R(0))\geq \frac{\pi R^2}{2N}.
 $$
\end{exmp}

\begin{exmp}
Estimating $\theta_\mathcal{K}$ via a lower bound on $B(\Delta)$ often  leads to  rather bad estimates. Take, for example, a small perturbation of the rank-one operator $\w_0=h_0\otimes h_0$, i.e., consider $\w=\sum_{n\in\N^d_0}\lambda_n( h_n\otimes h_n)$ with $\lambda_0=\sqrt{1-\varepsilon}$ and $\lambda_{k}=\sqrt{\varepsilon/N}$ for $k\in \mathcal{N}\subset \N_0^d$, $\#\mathcal{N}=N$, and $\lambda_k=0$ otherwise. For small $\varepsilon$ one has that
$$
\|\V_{\widetilde{\w}(\Delta)} \r\|_{L^1(\R^{2d},\S^2)} \approx C_{0,0}(\Delta)\|\V_{{\w}} \r\|_{L^1(\R^{2d},\S^2)},
$$ 
while   
$$
B(\Delta)=\inf_{m\in\mathcal{N}}\Big((1-\varepsilon)C_{0,m}(\Delta)+\frac{\varepsilon}{N}\sum_{n\in\mathcal{N}}C_{n,m}(\Delta)\Big)\leq  \inf_{m\in\mathcal{N}}C_{0,m}(\Delta)+\varepsilon,
$$
which can be significantly smaller than $C_{0,0}(\Delta)$, in particular, if $\mathcal{N}$ contains an element $k$ with $|k|$ large.  
\end{exmp}

For $\Delta,\Omega\subset\R^d$, the   maximum Nyquist density of $\Omega$, first introduced in \cite{MaxNyquist}, is given by 
$$
\nu(\Omega,\Delta)=\sup_{z\in \R^{2d}}|\Omega\cap (z+\Delta)|.
$$
When \(\Delta=D_R(0)\) is a disk, then \(\nu(\Omega,\Delta)=\nu(\Omega,R)\) is the $R$-measure introduced in the introduction, and \(\nu(\Omega,\Delta)\) can in general   be thought of as a measure of the sparsity of \(\Omega\), see the discussion in Section~\ref{subsec:intro1}. 
With the preparatory computations in place, we are now ready to apply Proposition~\ref{QuantumSieve} in order to connect the best concentration problem to the maximum Nyquist density.
\begin{corollary}\label{cor:max-nyq-LS}
Let $\w=\sum_{n\in \N_0^d} \lambda_n (h_n\otimes h_n)\in \A^1$  be such that $\|\w\|_{\S^2}=1$ and let \(\Omega\subset\R^{2d}\) be measurable. Moreover, let $\theta_\mathcal{K}$ be given by \eqref{eq:theta_k-polyradial}. Then for \(\r\in \A^p\),  it holds 
\begin{align*}
        \frac{\|\chi_{\Omega}\cdot\V_\w\r\|_{L^p(\R^{2d},\S^2)}^p}{\|\V_\w\r\|_{L^p(\R^{2d},\S^2)}^p}&\leq \theta_{\mathcal{K}}\cdot \sup_{z\in \R^{2d}}\int_{\Omega\cap z+\Delta}\left\|\w^*\pi(z)^*\pi(w)\w\right\|_{\mathrm{op}}dw
        \\
        &\leq \theta_{\mathcal{K}}\cdot \sup_{z\in \R^{2d}}\int_{\Omega\cap z+\Delta} \left(\sum_{m,n\in \N_0^d}|\lambda_n|^2|\lambda_m|^2|\langle  h_n,\pi(z-w)h_m\rangle|^2\right)^{1/2}dw
       \\
       &\leq \theta_{\mathcal{K}}\cdot\nu(\Omega,\Delta).
\end{align*} 
\end{corollary}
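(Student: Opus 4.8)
The plan is to specialize the abstract large sieve principle (Proposition~\ref{QuantumSieve}) to the kernel coming from the quantum local reproducing formula and then to simplify the resulting geometric term using Lemma~\ref{Identity} together with Lemma~\ref{MyDblOrth}. Concretely, I would take $\mathcal{K}(z,w)=K_\w(z,w)\cdot\chi_\Delta(w-z)$, i.e. the reproducing kernel restricted to the shifted Reinhardt domain $z+\Delta$. By Proposition~\ref{prop:quant-loc-rep}, this kernel satisfies the invertibility hypothesis of Proposition~\ref{prop:abstract-LS}: the induced integral operator $I_\mathcal{K}$ sends $\V_\w\r$ to $\V_{\widetilde\w(\Delta)}\r$, and the associated constant is exactly the $\theta_\mathcal{K}$ defined in \eqref{eq:theta_k-polyradial}. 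Plugging this into Proposition~\ref{QuantumSieve} immediately gives
\begin{align*}
\frac{\|\chi_\Omega\cdot\V_\w\r\|_{L^p(\R^{2d},\S^2)}^p}{\|\V_\w\r\|_{L^p(\R^{2d},\S^2)}^p}\le \theta_\mathcal{K}\cdot\sup_{w\in\R^{2d}}\int_{\Omega}\|K_\w(z,w)\chi_\Delta(w-z)\|_{\mathrm{op}}\,dz = \theta_\mathcal{K}\cdot\sup_{z\in\R^{2d}}\int_{\Omega\cap(z+\Delta)}\|\w^*\pi(z)^*\pi(w)\w\|_{\mathrm{op}}\,dw,
\end{align*}
which is the first claimed inequality (after relabeling the roles of $z$ and $w$, using that $K_\w(z,w)$ and $K_\w(w,z)$ have the same operator norm, and $\|\w\|_{\S^2}=1$).

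For the second inequality I would bound the operator norm by the Hilbert--Schmidt norm, $\|\cdot\|_{\mathrm{op}}\le\|\cdot\|_{\S^2}$, and then expand $\w^*\pi(z)^*\pi(w)\w$ in the Hermite basis. Since $\w=\sum_n\lambda_n(h_n\otimes h_n)$ is polyradial, the expansion \eqref{eq:kernel-written-out} reads $\w^*\pi(z)^*\pi(w)\w=\sum_{m,n}\lambda_n\lambda_m\langle\pi(z)h_n,\pi(w)h_m\rangle(h_m\otimes h_n)$, and because $\{h_m\otimes h_n\}_{m,n}$ is an orthonormal system in $\S^2$, the Hilbert--Schmidt norm is precisely $\big(\sum_{m,n}|\lambda_n|^2|\lambda_m|^2|\langle\pi(z)h_n,\pi(w)h_m\rangle|^2\big)^{1/2}$. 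Finally $\langle\pi(z)h_n,\pi(w)h_m\rangle = \overline{\langle\pi(w-z)h_m,h_n\rangle}$ up to a unimodular phase (or directly $|\langle\pi(z)h_n,\pi(w)h_m\rangle|=|\langle h_n,\pi(z-w)h_m\rangle|$ by the covariance of time-frequency shifts), which yields the displayed middle expression.

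For the last inequality, the key point is that the sum $\sum_{m,n}|\lambda_n|^2|\lambda_m|^2|\langle h_n,\pi(z-w)h_m\rangle|^2$ is bounded above by $1$ pointwise: indeed, using Moyal's identity $\sum_n|\langle h_n,\pi(v)h_m\rangle|^2=\sum_n|V_{h_m}h_n(v)|^2=\langle\pi(v)h_m,\pi(v)h_m\rangle=1$ for every fixed $m$ and $v$, together with $\sum_m|\lambda_m|^2=\|\w\|_{\S^2}^2=1$, gives $\sum_{m,n}|\lambda_n|^2|\lambda_m|^2|\langle h_n,\pi(v)h_m\rangle|^2\le\sum_{m}|\lambda_m|^2\big(\sum_n|\lambda_n|^2\big)\cdot 1\le 1$. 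Hence the integrand is at most $1$, and $\int_{\Omega\cap(z+\Delta)}1\,dw=|\Omega\cap(z+\Delta)|\le\nu(\Omega,\Delta)$ by definition of the maximum Nyquist density, completing the chain. The only mild subtlety — and the step I would be most careful about — is verifying that the integral operator $I_\mathcal{K}$ associated to this truncated kernel is indeed bounded and boundedly invertible on $\V_\w(\A^1)$ so that Proposition~\ref{prop:abstract-LS} applies; this is exactly the content of Proposition~\ref{prop:quant-loc-rep} (the weak identity \eqref{eq:weak-loc-rep} shows $I_\mathcal{K}\V_\w\r=\V_{\widetilde\w(\Delta)}\r$, and $\widetilde\w(\Delta)\in\A^1$ with $\theta_\mathcal{K}<\infty$ encodes the bounded invertibility in the relevant cases), so no new work is needed beyond invoking it.
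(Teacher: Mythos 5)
Your proposal is correct and follows essentially the same route as the paper: specialize Proposition~\ref{QuantumSieve} to the truncated kernel $\mathcal{K}(z,w)=K_\w(z,w)\chi_{z+\Delta}(w)$ (justified by the local reproducing formula, Proposition~\ref{prop:quant-loc-rep}), bound $\|\cdot\|_{\mathrm{op}}$ by $\|\cdot\|_{\S^2}$ via the expansion \eqref{eq:kernel-written-out}, and bound the resulting sum by $1$. The only cosmetic difference is that for the last step the paper simply uses $|\langle\pi(w)h_n,\pi(z)h_m\rangle|\le 1$ rather than your Parseval argument, and you are slightly more explicit than the paper about the $z\leftrightarrow w$ relabeling and the invertibility hypothesis.
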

\begin{proof}
The first inequality  is a direct consequence of  Proposition~\ref{QuantumSieve} applied for $\mathcal{K}(z,w)=K_\w(z,w)\cdot\chi_{z+\Delta}(w)$.
To obtain the second inequality, we recall that  that $\|\cdot\|_{\mathrm{op}}\leq \|\cdot\|_{\S^2}$ and observe that by \eqref{eq:kernel-written-out}
\begin{equation*}\label{eq:kernel-norm}
\|K_\w(z,w)\|_{\S_2}^2=\sum_{m,n\in \N_0^d}|\lambda_n|^2|\lambda_m|^2|\langle \pi(w)h_n,\pi(z)h_m\rangle|^2. 
\end{equation*} 
The third inequality is due to the bound \begin{align*}
    \sum_{m,n\in \N_0^d}|\lambda_n|^2|\lambda_m|^2|\langle \pi(w)h_n,\pi(z)h_m\rangle|^2\leq \sum_{m,n\in \N_0^d}|\lambda_n|^2|\lambda_m|^2=1.
\end{align*}
\end{proof}
Combining Corollary~\ref{cor:max-nyq-LS} with  the estimate from \eqref{eq:B-estimate-polyradial} then establishes  Theorem~\ref{thm:1}.
Moreover, when \(\w=h_n\otimes h_n\) Corollary~\ref{cor:max-nyq-LS} recovers \cite[Theorem~3]{LargeSieve}, as expected.

\section{Applications of the quantum large sieve principle}\label{sec:applications}
In this section we will consider two special cases of the operator STFT, both of which are popular phase space representations of operators and functions, respectively. We will see how the large sieve principle applies to both cases. 

\subsection{The Husimi function}


Given a positive trace class operator \(\r\), the assignment \begin{align*}
    H_\r(z):= \langle \r\,\pi(z)h_0,\pi(z)h_0 \rangle 
\end{align*}
is called the \textit{Husimi Q-function} of \(\r\) \cite{Husimi1, Husimi2, Schupp}, the \textit{coherent state transform} of \(\r\) \cite{FrankNicolaTilli} or the \emph{Berezin transform} of $\r$ \cite{bayer,CorderoGrochenig}. In physics, it is a common way to represent a quantum state on phase space.
Naturally, we interpret the integral \begin{align*}
    \int_{\Omega} H_\r(z)dz
\end{align*}
as the time-frequency localization of the state \(\r\) on the domain \(\Omega\).
An application of Parseval's theorem shows that the Husimi function of \(\r\) equals the convolution \(\r\star (h_0\otimes h_0)\). Keeping in mind that \((h_0\otimes h_0)^2=h_0\otimes h_0\), it is also clear that 
\begin{align*}
\big\|\V_{h_0\otimes h_0}\sqrt{\r}(z)\big\|_{\S^2}^2&=\big\langle (h_0\otimes h_0)\pi(z)^\ast \sqrt{\r },(h_0\otimes h_0)\pi(z)^\ast \sqrt{\r}\big\rangle_{\S^2}
\\
&=\text{tr}\big(\sqrt{\r}\pi(z) (h_0\otimes h_0)\pi(z)^\ast \sqrt{\r}\,  \big)=\text{tr}(\r\, \alpha_z (h_0\otimes h_0) )
\\
&=\sum_{n\in \N_0^d}\langle \r \, \pi(z)h_0,e_n\rangle \langle e_n,\pi(z)h_0\rangle =H_\r(z).
\end{align*}
 Proposition~\ref{QuantumSieve} therefore provides an estimate for the Husimi function's concentration on \(\Omega\) \begin{align}\label{eq:husimi-p}
    \frac{\int_{\Omega}H_\r(z)^{\frac{p}{2}}dz} {\|H_\r\|_{p/2}^{p/2}}\leq \theta_\mathcal{K}\cdot \sup_{w\in \R^{2d}}\int_\Omega \left\|\mathcal{K}(z,w)\right\|_{\mathrm{op}}dz.
\end{align}
 Using our results we can get an estimate 
 in terms of the reproducing kernel, or even in terms of the maximum Nyquist density.
For instance, setting \(p=2\) yields
\begin{align*}
       \int_{\Omega}H_\r(z)dz\leq\theta_\mathcal{K}\cdot \sup_{w\in \R^{2d}}\int_\Omega \left\|\mathcal{K}(z,w)\right\|_{\mathrm{op}}dz\cdot\|\r\|_{\S^1} ,
\end{align*}
where we used Proposition~\ref{prop:op-stft} and the fact that $\|\sqrt{\r}\|_{\S^2}^2=\|\r\|_{\S^1}$.
Applying Corollaries~\ref{QuantumSieve_1} and \ref{cor:max-nyq-LS} to the estimate above, we can explicitly write
\begin{align*}
       \int_{\Omega}H_\r(z)dz\leq  \sup_{w\in \R^{2d}}\int_\Omega e^{-\pi \frac{|z-w|^2}{2}}dz\cdot\|\r\|_{\S^1} ,
\end{align*}
and
\begin{align*}
       \int_{\Omega}H_\r(z)dz\leq   C_{0,0}(\Delta)^{-1}\cdot \nu(\Omega,\Delta)\cdot\|\r\|_{\S^1} .
\end{align*}
The Husimi function therefore distributes an operator 
$\r$
 in phase space in precisely the same way that the spectrogram with a Gaussian window distributes a function, see \cite{LargeSieve}. 

For \(p\geq 2\), the estimate \eqref{eq:husimi-p} can be interpreted as a local  bound on the  'generalized Wehrl entropy' for \(\rho\). The standard sharp bounds given in~\cite{Frank, FrankNicolaTilli} are of the following form: for $\Phi:[0,1]\to \R$ convex one has
$$
\int_{\R^{2}} \Phi(H_\rho(z))dz\leq \int_0^1\Phi(s)\frac{ds}{s},\qquad \r\succeq 0,\ \text{tr}(\r)=1,
$$
with the inequality being sharp.
This was first established by Lieb and Solevej \cite{Lieb-Solovej} for rank-one operators and later extended by Frank \cite{Frank} for density operators. For pure states \(f\otimes f\), a sharp local estimate that extended the results from \cite{faber-krahn} was given in \cite[Theorem~5.3]{KNOcT}. 
For $\Phi(s)=s^{p/2}$, \eqref{eq:husimi-p} gives a new way to locally bound the Wehrl entropy which, in case that the maximum Nyquist density of \(\Omega\) is small,   can significantly improve upon the bounds  in \cite{KNOcT} while also allowing for general density operators $\rho$.

\subsection{Concentration estimates for Cohen's class distributions}
In time-frequency analysis, particular attention is devoted to  quadratic time-frequency distributions that are sesquilinear, covariant and separately weak-* continuous, that is to distributions \(Q\) that satisfy\begin{align*}
    Q(\pi(z_0)f,\pi(z_0)g)(z)&=Q(f,g)(z-z_0),\qquad z,z_0\in\R^{2d},
    \end{align*}
    and    
    \begin{align*}
    \lim_{n\to \infty} \langle Q(f_n,g),\Phi\rangle=\langle Q(f,g),\Phi\rangle,\qquad \lim_{n\to \infty} \langle Q(g,f_n),\Phi\rangle=\langle Q(g,f),\Phi\rangle,
\end{align*}
for all \(\Phi\in \mathscr{S}'(\R^{2d})\) and \(f_n,f,g\in \mathscr{S}(\R^d)\) with \(f_n\to f\),
 see for instance \cite{BoggiattoCohen,RT1,RT2,JanssenSurvey,Cohen}. If $f=g$ we write $Q(f)=Q(f,f).$
The collection of these time-frequency distributions, which is called \textit{Cohen's class}, is closely connected to quantum harmonic analysis. In fact, for any distribution \(Q\) in Cohen's class there is a tempered operator \(\eta\in \mathfrak{S}'\) such that\begin{align*}
    Q(f,g)=Q_\eta(f,g)=(f\otimes g)\star\widecheck{\eta},
\end{align*}
see~\cite[Prop. 7.1]{Bible2}. 
Cohen's class has an important connection to the mixed-state localization operators. Given a function \(f\in L^2(\R^d)\) and a domain \(\Omega\subset \R^{2d}\) we have~\cite[Prop. 8.2]{Bible2}
\begin{align*}
    \langle\chi_{\Omega}\star \eta(f),f\rangle=\int_{\Omega} Q_\eta(f)(z)\,dz.
\end{align*}
So by the min-max principle, the eigenfunctions of the operator \(\chi_{\Omega}\star \eta\) are the stationary points of the best concentration problem \(\displaystyle \sup_{f\in L^2(\R^d)}\|f\|_2^{-2}{\int_{\Omega} Q_\eta f(z)\,dz}\). In particular, when \(\|\w\|_{\S^2}=1\) and \(\eta=\w\w^*\), then the eigenfunctions of the mixed-state localization operator \(A_{\Omega}^{\w\w^\ast}\) are the stationary points of the best concentration problem for the positive Cohen's class distribution \(Q_{\w\w^*}f\).

Let $g\in L^2(\R^d)$ with \(\|g\|_2=1\), and  \(\eta=\w\w^*\). Since \( (f\otimes g) (f\otimes g)^* = f\otimes f,\)  we get that by Lemma~\ref{Identity}
\begin{align}
   Q_{\w\w^*}(f)(z)&= (f\otimes f)\star\widecheck{\w\w^*}=\big((f\otimes g) (f\otimes g)^*\big)\star\widecheck{\w\w^*}
   \notag
   \\ & =   \|\V_\w(f\otimes g)(z)\|_{\S^2}^2=\|\V_\w f(z)\|_{2}^2.  \label{eq:Q_S=V_S}
\end{align}
We can thus use the large sieve principle to obtain concentration estimates for Cohen's class distributions, at least in the case when \(\w\) is admissible. Note that the large sieve approach works even in the case \(p\neq 2\), unlike the min-max approach. Since the operator STFT of an admissible operator defines an equivalent norm on \(M^p(\R^d)\) we get the following corollary for Cohen's class on modulation spaces.

\begin{corollary}\label{CohenSieve}
    Let \(p\in[1,\infty)\), \(\w\in \A^1\) with \(\|\w\|_{\S^2}=1\) and let \(\Omega\subset\R^{2d}\) be measurable. If \(\mathcal{K}\) is as in Proposition~\ref{prop:abstract-LS}, then for \(f\in M^p(\R^d)\) we have\begin{align*}
        \frac{1} {\| Q_{\w\w^*}(f)\|_{p/2}^{p/2}}\int_{\Omega}\left(Q_{\w\w^*}(f)(z)\right)^{\frac{p}{2}}dz\leq \theta_{\mathcal{K}}\cdot \sup_{w\in \R^{2d}}\int_\Omega \left\|\mathcal{K}(z,w)\right\|_{\mathrm{op}}dz.
    \end{align*}
\end{corollary}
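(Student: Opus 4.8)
The plan is to derive Corollary~\ref{CohenSieve} as a direct consequence of the general operator large sieve principle (Proposition~\ref{QuantumSieve}) combined with the identity \eqref{eq:Q_S=V_S} relating the Cohen's class distribution $Q_{\w\w^*}(f)$ to the operator STFT of $f$. First I would fix $g\in L^2(\R^d)$ with $\|g\|_2=1$ and observe that \eqref{eq:Q_S=V_S} gives the pointwise identity $Q_{\w\w^*}(f)(z)=\|\V_\w f(z)\|_2^2 = \|\V_\w(f\otimes g)(z)\|_{\S^2}^2$. Raising to the power $p/2$ and integrating, this turns the left-hand side of the claimed inequality into $\|\chi_\Omega\cdot\V_\w f\|_{L^p(\R^{2d},L^2(\R^d))}^p \big/ \|\V_\w f\|_{L^p(\R^{2d},L^2(\R^d))}^p$, which is exactly the quantity estimated in Remark~\ref{rem:mixed-state} (the function-valued specialization of Proposition~\ref{QuantumSieve} via \eqref{eq:norm-S^2=norm-L^2}).

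The key steps, in order, are: (i) invoke \eqref{eq:Q_S=V_S} to rewrite $\int_\Omega Q_{\w\w^*}(f)(z)^{p/2}dz = \int_\Omega \|\V_\w f(z)\|_2^p\,dz$ and likewise $\|Q_{\w\w^*}(f)\|_{p/2}^{p/2} = \int_{\R^{2d}}\|\V_\w f(z)\|_2^p\,dz$; (ii) apply the function-valued large sieve bound from Remark~\ref{rem:mixed-state}, i.e.
\begin{align*}
\frac{\|\chi_\Omega\cdot\V_\w f\|_{L^p(\R^{2d},L^2(\R^d))}^p}{\|\V_\w f\|_{L^p(\R^{2d},L^2(\R^d))}^p}\leq \theta_\mathcal{K}\cdot\sup_{w\in\R^{2d}}\int_\Omega\|\mathcal{K}(z,w)\|_{\mathrm{op}}\,dz,
\end{align*}
which itself follows from Proposition~\ref{QuantumSieve} applied to the rank-one operator $f\otimes g\in\A^p$ together with the norm identity \eqref{eq:norm-S^2=norm-L^2}; (iii) combine (i) and (ii) to conclude. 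One should note that $f\in M^p(\R^d)$ ensures $f\otimes g\in\A^p$ by the embedding discussed after the definition of $\A^p$, so that Proposition~\ref{QuantumSieve} is applicable, and that the hypothesis $\w\in\A^1$ guarantees $\|Q_{\w\w^*}(f)\|_{p/2}<\infty$ via the equivalent-norm statement of Proposition~\ref{prop:eqNorm}.

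Honestly there is no real obstacle here: the statement is essentially a translation of Proposition~\ref{QuantumSieve} through the dictionary \eqref{eq:Q_S=V_S}, and the only point requiring a word of care is checking that the denominators are finite and nonzero (so the Rayleigh quotient is well-defined), which follows from $f\in M^p(\R^d)$ and the equivalence of norms induced by an admissible window $\w$ (Proposition~\ref{prop:eqNorm}). If $f=0$ the inequality is vacuous, so we may assume $\|\V_\w f\|_{L^p}>0$. I would therefore keep the proof to two or three lines, citing \eqref{eq:Q_S=V_S}, Remark~\ref{rem:mixed-state}, and the embedding $M^p(\R^d)\hookrightarrow\A^p$.
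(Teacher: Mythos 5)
Your proposal is correct and follows the same route the paper takes: the corollary is obtained by combining the identity \eqref{eq:Q_S=V_S} (equivalently, Lemma~\ref{Identity} applied to $\r=f\otimes g$) with the large sieve bound of Proposition~\ref{QuantumSieve} applied to the rank-one operator $f\otimes g\in\A^p$, using \eqref{eq:norm-S^2=norm-L^2} and the embedding $M^p(\R^d)\hookrightarrow\A^p$. Your added remarks on well-definedness of the Rayleigh quotient are sound but not a substantive departure from the paper's argument.
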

\begin{remark}
    If \(\w=g\otimes g\) with \(\|g\|_{L^2}=1\), then $Q_{\w\w^*}(f)(z)=|V_gf(z)|^2$ and we recover the original large sieve principle for the STFT \cite{LargeSieve}.
\end{remark}

If we want to derive a concentration bound for Cohen's class of admissible operators $\eta$ that are not necessarily positive, we can use
\begin{align*}
   |Q_\eta f(z)|&=\big|\big\langle  \eta\pi(z)^\ast f,\pi(z)^\ast f\big\rangle\big| \leq \big\|  \eta\pi(z)^\ast f\big\|_2\,\big\|\pi(z)^\ast f\big\|_2 =\big\|\mathfrak{V}_{\eta^\ast}f(z)\big\|_2\, \|f\|_2.
\end{align*}
to get a simpler bound which is structurally similar to a local Lieb's inequality~\cite{LiebIneq}, although with worse bounds.
\begin{prop}
    Let $p\in[2,\infty)$, $\eta^\ast\in\A^1$ with $\|\eta\|_{\S^2}=1$ and $\Omega\subset \R^{2d}$ measurable. For $f\in L^2(\R^d)$, it holds
    \begin{align*}
\frac{\int_\Omega |Q_\eta f(z)|^pdz}{\|f\|^{2p}_2}&
\leq  \theta_\mathcal{K} \cdot\sup_{w\in\R^{2d}}\int_\Omega \|\mathcal{K}(z,w)\|_{\mathrm{op}}dz.
\end{align*}
\end{prop}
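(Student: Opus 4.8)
The plan is to concatenate two ingredients that are already available: the pointwise estimate $|Q_\eta f(z)|\le\|\V_{\eta^\ast}f(z)\|_2\,\|f\|_2$ recorded just above the statement, and the large sieve principle of Proposition~\ref{QuantumSieve} (equivalently its function-valued form in Remark~\ref{rem:mixed-state}) applied with the operator window $\w=\eta^\ast\in\A^1$, which is admissible with $\|\eta^\ast\|_{\S^2}=\|\eta\|_{\S^2}=1$. Here $\V_{\eta^\ast}f(z)=\eta\,\pi(z)^\ast f$, and $\mathcal{K}$, $\theta_\mathcal{K}$ are understood to be the kernel and the constant attached to this window as in Proposition~\ref{prop:abstract-LS}.

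First I would record two auxiliary facts. The ideal property of $\S^2$ gives the pointwise bound $\|\V_{\eta^\ast}f(z)\|_2=\|\eta\,\pi(z)^\ast f\|_2\le\|\eta\|_{\mathrm{op}}\|f\|_2\le\|\eta\|_{\S^2}\|f\|_2=\|f\|_2$ for every $z$, while Moyal's identity (Proposition~\ref{prop:op-stft} transported to the function side via \eqref{eq:norm-S^2=norm-L^2}) gives $\|\V_{\eta^\ast}f\|_{L^2(\R^{2d},L^2(\R^d))}^2=\|f\|_2^2$. Since $p\ge 2$, splitting $\|\V_{\eta^\ast}f(z)\|_2^p=\|\V_{\eta^\ast}f(z)\|_2^{p-2}\,\|\V_{\eta^\ast}f(z)\|_2^2$ and using the pointwise bound on the first factor yields
\[
\|\V_{\eta^\ast}f\|_{L^p(\R^{2d},L^2(\R^d))}^p\le\|f\|_2^{p-2}\,\|\V_{\eta^\ast}f\|_{L^2}^2=\|f\|_2^p;
\]
in particular $\V_{\eta^\ast}f\in L^p(\R^{2d},L^2(\R^d))$, consistent with $f\in L^2=M^2\subset M^p$, which is the hypothesis under which Remark~\ref{rem:mixed-state} applies.

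Then I would chain the estimates. The pointwise bound gives
\[
\int_\Omega|Q_\eta f(z)|^p\,dz\le\|f\|_2^p\int_\Omega\|\V_{\eta^\ast}f(z)\|_2^p\,dz=\|f\|_2^p\,\big\|\chi_\Omega\cdot\V_{\eta^\ast}f\big\|_{L^p(\R^{2d},L^2(\R^d))}^p,
\]
and the large sieve with window $\eta^\ast$ bounds the last factor by $\theta_\mathcal{K}\cdot\sup_{w\in\R^{2d}}\int_\Omega\|\mathcal{K}(z,w)\|_{\mathrm{op}}\,dz\cdot\|\V_{\eta^\ast}f\|_{L^p(\R^{2d},L^2(\R^d))}^p$. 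Inserting $\|\V_{\eta^\ast}f\|_{L^p}^p\le\|f\|_2^p$ from the previous step and dividing by $\|f\|_2^{2p}$ gives the claimed inequality.

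There is no genuine obstacle here; the argument merely glues an elementary pointwise bound to the already-established large sieve. The only points requiring care are the exponent bookkeeping — the hypothesis $p\ge 2$ is used both to control $\|\V_{\eta^\ast}f\|_{L^p}$ by combining the $L^\infty$ bound with the exact $L^2$ identity, and implicitly through the embedding $L^2=M^2\subset M^p$ — and using the normalization $\|\eta\|_{\S^2}=1$ consistently, both in $\|\eta\|_{\mathrm{op}}\le\|\eta\|_{\S^2}=1$ and in Moyal's identity.
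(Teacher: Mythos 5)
Your proposal is correct and follows essentially the same route as the paper's proof: the pointwise bound $|Q_\eta f(z)|\le\|\V_{\eta^\ast}f(z)\|_2\|f\|_2$, the large sieve with window $\eta^\ast$, and the reduction of the $L^p$-norm of $\V_{\eta^\ast}f$ to its $L^2$-norm via the uniform bound $\|\V_{\eta^\ast}f(z)\|_2\le\|f\|_2$ together with Moyal's identity. The only cosmetic difference is that the paper normalizes $\|f\|_2=1$ and rescales at the end, whereas you carry the norms explicitly.
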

\begin{proof}
Let us assume that $\|f\|_2=1$. Since $\|\V_{\eta^\ast}f(z)\|_2\leq \|\eta^\ast\|_{\mathrm{op}}\|f\|_2\leq 1$, it  then follows that for $p\geq 2$ 
\begin{align*}
\int_\Omega |Q_\eta f(z)|^pdz&\leq \int_\Omega \|\mathfrak{V}_{\eta^\ast} f(z)\|_{2}^pdz
\\
&\leq \theta_\mathcal{K}\cdot \sup_{w\in\R^{2d}}\int_\Omega \|\mathcal{K}(z,w)\|_{\mathrm{op}}dz \cdot\int_{\R^{2d}}\|\mathfrak{V}_{\eta^\ast} f(z)\|_{2}^pdz
\\
&\leq  \theta_\mathcal{K} \cdot\sup_{w\in\R^{2d}}\int_\Omega \|\mathcal{K}(z,w)\|_{\mathrm{op}}dz \cdot\int_{\R^{2d}}\|\mathfrak{V}_{\eta^\ast} f(z)\|_{2}^2dz
\\ &= \theta_\mathcal{K} \cdot\sup_{w\in\R^{2d}}\int_\Omega \|\mathcal{K}(z,w)\|_{\mathrm{op}}dz.
\end{align*}
Rescaling $f$ then yields the stated result.
\end{proof}


An immediate consequence of Corollary~\ref{CohenSieve} is a variant of an uncertainty principle for Cohen's class distributions given in \cite[Proposition~7.7]{Bible2} which can be reformulated as follows:  If \(\eta\) is a bounded operator with $\|\eta\|_{\mathrm{op}}\leq 1$, \(\Omega\subset\R^{2d}\) is a measurable subset and \(f\in L^2(\R^d)\), $\|f\|_2=1$, is such that \begin{align*}
    \int_{\Omega} |Q_\eta f(z)|dz\geq(1-\varepsilon),
\end{align*}
then\begin{align*}
     1-\varepsilon \leq \|\eta\|_{\mathrm{op}}|\Omega|.
\end{align*}
So heuristically, if a large part of the signal's energy is concentrated on a domain \(\Omega\), then that domain must also be large. While the above inequality is true for any bounded operator \(\eta\), the total energy \(\int_{\R^{2d}}|Q_\eta f(z)|\,dz\) will in general only be finite in the case when \(\eta\in \S^1\). Moreover, \(Q_\eta f\) is only positive if \(\eta\) is positive definite, so the above inequality really only behaves like a proper uncertainty principle in this case. In the sequel we will therefore only consider the "physical" case when \(\eta\in \S^1\), and \( \eta\succeq 0.\) Under these reasonable assumptions we extend the result above to  $p\neq2$  and obtain an uncertainty principle that actually measures the concentration of the total energy on $\Omega.$

\begin{prop}\label{UP}
    Let \(p\in[1,\infty),\ \eta\in \S^1\) be a positive operator with \(\|\eta\|_{\S^1}=1\) and $\sqrt{\eta}\in\A^1$. Let \(f\in M^p(\R^d)\) be such that \(\| {Q_{\eta} f}\|_{p/2}=1\), and let \(\Omega\subset \R^{2d}\) and \(\varepsilon\geq 0\).   If  
    \begin{align*}
        \int_{\Omega} \left(Q_{\eta }f(z)\right)^{\frac{p}{2}}dz\geq 1-\varepsilon, 
    \end{align*}
    then we have 
    \begin{align*}
        1-\varepsilon\leq \sup_{w\in \R^{2d}}\int_\Omega \big\|\sqrt{\eta}\pi(z)^*\pi(w)\sqrt{\eta}\,\big\|_{\mathrm{op}}dz\leq\|\eta\|_{\mathrm{op}}|\Omega|\leq |\Omega|.
    \end{align*}
\end{prop}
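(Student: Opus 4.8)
The plan is to combine the Cohen's-class large sieve bound (Corollary~\ref{CohenSieve}) with the specific choice $\w=\sqrt{\eta}$ and $\mathcal{K}=K_{\sqrt{\eta}}$, so that $\theta_\mathcal{K}=1$, and then estimate the resulting operator-norm integral in two successively cruder ways. First I would observe that since $\eta\succeq 0$ and $\|\eta\|_{\S^1}=1$, the operator $\w:=\sqrt{\eta}$ satisfies $\w\w^\ast=\eta$ and $\|\w\|_{\S^2}^2=\mathrm{tr}(\eta)=\|\eta\|_{\S^1}=1$, so the standing normalization $\|\w\|_{\S^2}=1$ holds; by hypothesis $\sqrt{\eta}\in\A^1$, so $\w$ is admissible and Corollary~\ref{CohenSieve} applies with $Q_{\w\w^\ast}=Q_\eta$. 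Using the reproducing kernel $\mathcal{K}(z,w)=K_\w(z,w)=\w^\ast\pi(z)^\ast\pi(w)\w$ gives $\theta_\mathcal{K}=1$ (as in Corollary~\ref{QuantumSieve_1}), whence
\begin{align*}
\frac{1}{\|Q_\eta f\|_{p/2}^{p/2}}\int_{\Omega}\big(Q_\eta f(z)\big)^{p/2}dz\leq \sup_{w\in\R^{2d}}\int_\Omega \big\|\sqrt{\eta}\,\pi(z)^\ast\pi(w)\sqrt{\eta}\,\big\|_{\mathrm{op}}dz.
\end{align*}
Since $\|Q_\eta f\|_{p/2}=1$ by assumption, the left-hand side is exactly $\int_\Omega (Q_\eta f(z))^{p/2}dz\geq 1-\varepsilon$, which yields the first inequality of the claim.

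Next I would bound the operator-norm integrand pointwise. By submultiplicativity of the operator norm and unitarity of the time-frequency shifts, $\|\sqrt{\eta}\,\pi(z)^\ast\pi(w)\sqrt{\eta}\,\|_{\mathrm{op}}\leq \|\sqrt{\eta}\|_{\mathrm{op}}\,\|\pi(z)^\ast\pi(w)\|_{\mathrm{op}}\,\|\sqrt{\eta}\|_{\mathrm{op}}=\|\sqrt{\eta}\|_{\mathrm{op}}^2=\|\eta\|_{\mathrm{op}}$, using that $\|\sqrt{\eta}\|_{\mathrm{op}}^2=\|\eta\|_{\mathrm{op}}$ for a positive operator. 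Integrating over $\Omega\subset\R^{2d}$ gives $\sup_{w}\int_\Omega \|\sqrt{\eta}\,\pi(z)^\ast\pi(w)\sqrt{\eta}\,\|_{\mathrm{op}}dz\leq \|\eta\|_{\mathrm{op}}|\Omega|$, the second inequality. Finally, for a positive trace-class operator one has $\|\eta\|_{\mathrm{op}}\leq \|\eta\|_{\S^1}=1$, which gives the last inequality $\|\eta\|_{\mathrm{op}}|\Omega|\leq|\Omega|$ and completes the chain.

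The only genuinely non-routine point is checking that the hypotheses of Corollary~\ref{CohenSieve} are met for $\w=\sqrt{\eta}$ — in particular that $Q_{\w\w^\ast}f=Q_\eta f$ (which is precisely identity~\eqref{eq:Q_S=V_S} applied with $\w=\sqrt{\eta}$, since $\w\w^\ast=\eta$) and that the normalization $\|\w\|_{\S^2}=1$ is equivalent to $\|\eta\|_{\S^1}=1$. Everything after that is three elementary estimates. One should also note that the case $p=2$ recovers the uncertainty principle of \cite[Proposition~7.7]{Bible2} with $\varepsilon=0$ relaxed, and that for $p\neq 2$ the normalization $\|Q_\eta f\|_{p/2}=1$ (rather than $\|f\|_2=1$) is exactly what makes the left-hand side collapse to the bare integral $\int_\Omega (Q_\eta f)^{p/2}$, so no stray powers of $\|f\|_2$ appear. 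I would present the argument in the order: (i) reduce to Corollary~\ref{CohenSieve} with $\theta_\mathcal{K}=1$; (ii) use the normalization to extract $1-\varepsilon$; (iii) apply the submultiplicative bound; (iv) use $\|\eta\|_{\mathrm{op}}\leq\|\eta\|_{\S^1}=1$.
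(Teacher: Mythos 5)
Your argument is correct and is essentially identical to the paper's proof: both apply Corollary~\ref{CohenSieve} with $\w=\sqrt{\eta}$ and $\mathcal{K}=K_\w$ (so $\theta_\mathcal{K}=1$), use the normalization $\|Q_\eta f\|_{p/2}=1$ to isolate the integral, and finish with $\|\sqrt{\eta}\,\pi(z)^\ast\pi(w)\sqrt{\eta}\,\|_{\mathrm{op}}\leq\|\sqrt{\eta}\|_{\mathrm{op}}^2=\|\eta\|_{\mathrm{op}}\leq\|\eta\|_{\S^1}=1$. Your write-up is merely more explicit about verifying the admissibility and normalization hypotheses, which the paper leaves implicit.
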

\begin{proof}
    Using Corollary~\ref{CohenSieve} with \(\w=\sqrt{\eta}\) and $\mathcal{K}=K_\w$, we get \begin{align*}
        1-\varepsilon &\leq  \int_{\Omega} \left(Q_{\eta}f(z)\right)^{\frac{p}{2}}dz\leq \sup_{w\in \R^{2d}}\int_\Omega \big\|\sqrt{\eta}\pi(z)^*\pi(w)\sqrt{\eta}\,\big\|_{\mathrm{op}}dz.      
    \end{align*}
    The result then follows immediately from $\|\sqrt{\eta}\|_{\mathrm{op}}^2=\|\eta\|_\mathrm{op}\leq\|\eta\|_{\S^1}.$
\end{proof}


While the large sieve principle improves the uncertainty principle in the physical case, it cannot improve \cite[Proposition~7.7]{Bible2} in all cases, due to the admissibility assumption.

\section{Signal recovery from operator STFT measurements}\label{sec:recovery}

 Following the ideas of Donoho and Logan \cite{DonohoLogan}, the inequalities established in the preceding sections can be applied to derive signal recovery results. In our case the signals are operators, so the signal recovery task can be thought of as an infinite dimensional matrix recovery problem. Restricting our attention to positive operators with trace \(1\), the task becomes a problem of quantum state reconstruction. We assume that we observe some noisy version of a signal we want to recover. Under certain structural assumptions on the noise we will derive estimates for signal recovery via minimization in the \(1\)-norm. This is reminiscent of theory of compressed sensing \cite{CompressedBook,DonohoCompressed,candes2006robust}. As in \cite{LargeSieve} we  consider two scenarios, although we believe that several other recovery results for functions can be adapted to our setting. 

First, we   consider a scenario  where the noise  is supported on some domain \(\Omega\) but is allowed to have arbitrary  finite \(L^1(\R^{2d},\S^2)\)-norm. Concretely, we assume that we observe the operator-valued function \begin{align}\label{eq:rec-1}
    G(z)=\V_\w\r(z)+N(z),
\end{align} where \(\w,\r\in \A^1\), \(\|\w\|_{\S^2}=1\), \(\|N\|_{L^1(\R^{2d},\S^2)}<\infty\), and \(\mathrm{supp}(N)\subseteq \Omega\). Under certain assumptions on   \(\Omega\), perfect signal recovery is possible. If $N$ is only approximately concentrated on $\Omega$, i.e., $\|\chi_{\Omega^c}\cdot N\|_{L^1(\R^{2d},\S^2)}\leq \varepsilon$, then it is possible to approximately recover $\r$. The abstract tool  that we will use is the following Donoho-Stark type result for Bochner spaces.
\begin{prop}\label{absRecovery}
    Let 
    \(B_1\subset L^1(\R^{2d},\S^2)\) be a Banach space,  \(\Omega\subset \R^{2d}\) and write 
    $$
    \delta(\Omega)=\sup_{F\in B_1}\frac{\int_{\Omega}\|F(z)\|_{\S^2}dz}{\int_{\R^{2d}}\|F(z)\|_{\S^2}dz}.
    $$
    Consider \(G=F+N\) where \(F\in B_1\) and $N\in L^1(\R^{2d},\S^2)$ with \(\|\chi_{\Omega^c}\cdot N\|_{L^1(\R^{2d},\S^2)}\leq\varepsilon\). If \(\delta(\Omega)<\frac{1}{2}\) and $\beta(G)$ is any solution to the minimization problem
    \begin{align}\label{eq:min}
   \argmin_{H\in B_1}\|H-G\|_{L^1(\R^{2d},\S^2)},
    \end{align}
    then
    $$
    \|F-\beta(G)\|_{L^1(\R^{2d},\S^2)}\leq \frac{2\varepsilon}{1-2\delta(\Omega)}.
    $$
    Moreover, if $\emph{supp}(N)\subseteq \Omega$, then $F$ is perfectly recovered as the  unique solution of \eqref{eq:min}.
\end{prop}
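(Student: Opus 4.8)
The plan is to mimic the classical Donoho--Stark argument, working entirely in the Bochner space $L^1(\R^{2d},\S^2)$ with the pointwise Hilbert--Schmidt norm replacing the absolute value. First I would record the elementary localization inequality: for any $H\in B_1$, the hypothesis $\delta(\Omega)<\tfrac12$ gives
$$
\int_{\Omega}\|H(z)\|_{\S^2}\,dz\leq \delta(\Omega)\,\|H\|_{L^1(\R^{2d},\S^2)}
\leq \delta(\Omega)\Big(\int_{\Omega}\|H(z)\|_{\S^2}\,dz+\int_{\Omega^c}\|H(z)\|_{\S^2}\,dz\Big),
$$
so that
$$
\int_{\Omega}\|H(z)\|_{\S^2}\,dz\leq \frac{\delta(\Omega)}{1-\delta(\Omega)}\int_{\Omega^c}\|H(z)\|_{\S^2}\,dz,
$$
and in particular $\|H\|_{L^1}\leq \frac{1}{1-\delta(\Omega)}\int_{\Omega^c}\|H(z)\|_{\S^2}\,dz$. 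This is the only place the large-sieve-type certificate condition enters, and it plays the role of the reverse norm bound for bandlimited functions in \cite{DonohoStark}.

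Next I would set up the error operator-valued function. Write $E:=F-\beta(G)$. Since $F\in B_1$ and $\beta(G)\in B_1$ (minimizer over $B_1$), we have $E\in B_1$, so the localization inequality above applies to $E$. Splitting the $\Omega^c$-integral,
$$
\int_{\Omega^c}\|E(z)\|_{\S^2}\,dz
\leq \int_{\Omega^c}\|F(z)-G(z)\|_{\S^2}\,dz+\int_{\Omega^c}\|G(z)-\beta(G)(z)\|_{\S^2}\,dz.
$$
The first term equals $\int_{\Omega^c}\|N(z)\|_{\S^2}\,dz=\|\chi_{\Omega^c}\cdot N\|_{L^1(\R^{2d},\S^2)}\leq\varepsilon$. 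For the second term, by optimality of $\beta(G)$ and the fact that $F$ is a competitor in \eqref{eq:min}, $\|G-\beta(G)\|_{L^1}\leq\|G-F\|_{L^1}=\|N\|_{L^1}$; but $\|N\|_{L^1}$ is not assumed small. To fix this I would instead compare on $\Omega^c$ directly: $\|G-\beta(G)\|_{L^1}\le \|G-F\|_{L^1}$ together with the pointwise triangle inequality $\|G-\beta(G)\|_{\S^2}\le \|G-F\|_{\S^2}+\|E\|_{\S^2}$ gives, after integrating over $\Omega^c$ and using $\int_{\Omega^c}\|G-F\|_{\S^2}\le\varepsilon$,
$$
\int_{\Omega^c}\|G-\beta(G)(z)\|_{\S^2}\,dz\le \|G-\beta(G)\|_{L^1}\le\|N\|_{L^1}
$$
--- which is still the wrong bound. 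The correct move, exactly as in Donoho--Stark, is: $\|E\|_{L^1}=\|F-\beta(G)\|_{L^1}\le\|F-G\|_{L^1}+\|G-\beta(G)\|_{L^1}\le 2\|N\|_{L^1}$ is too crude, so one must exploit that $E\in B_1$ and hence that $\|E\|_{L^1}$ is controlled by $\int_{\Omega^c}\|E\|_{\S^2}$, where the $N$ contribution is only $\varepsilon$. Concretely: $\int_{\Omega^c}\|E(z)\|_{\S^2}dz \le \varepsilon + \int_{\Omega^c}\|G(z)-\beta(G)(z)\|_{\S^2}dz$, and $\int_{\Omega^c}\|G-\beta(G)\|_{\S^2}\le\|G-\beta(G)\|_{L^1}\le\|G-F\|_{L^1}$; splitting $\|G-F\|_{L^1}=\|N\|_{L^1}=\int_\Omega\|N\|_{\S^2}+\varepsilon$ and bounding $\int_\Omega\|N\|_{\S^2}=\int_\Omega\|G-F\|_{\S^2}\le\int_\Omega\|G-\beta(G)\|_{\S^2}+\int_\Omega\|E\|_{\S^2}$, then feeding in the localization inequality for $E$ and for the competitor estimate, one closes the loop and obtains $\int_{\Omega^c}\|E\|_{\S^2}\le 2\varepsilon + \frac{\delta(\Omega)}{1-\delta(\Omega)}\int_{\Omega^c}\|E\|_{\S^2}$, hence $\int_{\Omega^c}\|E\|_{\S^2}\le\frac{2\varepsilon(1-\delta(\Omega))}{1-2\delta(\Omega)}$, and finally $\|E\|_{L^1}\le\frac{1}{1-\delta(\Omega)}\int_{\Omega^c}\|E\|_{\S^2}\le\frac{2\varepsilon}{1-2\delta(\Omega)}$.

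For the noiseless case $\operatorname{supp}(N)\subseteq\Omega$ we have $\varepsilon=0$, so the bound gives $\|F-\beta(G)\|_{L^1}=0$, i.e. $F$ is recovered. Uniqueness: if $H\in B_1$ is any minimizer, then $\|G-H\|_{L^1}\le\|G-F\|_{L^1}$ and $F-H\in B_1$, so by the localization inequality applied to $F-H$, the pointwise triangle inequality on $\Omega^c$ (where $G=F$), and the competitor bound, one gets $\|F-H\|_{L^1}\le\frac{1}{1-\delta(\Omega)}\int_{\Omega^c}\|F(z)-H(z)\|_{\S^2}dz=\frac{1}{1-\delta(\Omega)}\int_{\Omega^c}\|G(z)-H(z)\|_{\S^2}dz\le\frac{1}{1-\delta(\Omega)}\|G-H\|_{L^1}\le\frac{1}{1-\delta(\Omega)}\|G-F\|_{L^1}=\frac{1}{1-\delta(\Omega)}\int_\Omega\|N\|_{\S^2}$, which combined with the reverse inequality $\int_\Omega\|F-H\|_{\S^2}\le\frac{\delta(\Omega)}{1-\delta(\Omega)}\int_{\Omega^c}\|F-H\|_{\S^2}=0$ forces $F=H$ off $\Omega$ and then on $\Omega$ by the same chain with $2\delta(\Omega)<1$. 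The main obstacle is purely bookkeeping: arranging the triangle-inequality splits so that every appearance of the (possibly large) quantity $\|N\|_{L^1}$ is absorbed into either $\varepsilon$ or a term carrying the contraction factor $\frac{\delta(\Omega)}{1-\delta(\Omega)}<1$; once that telescoping is set up correctly, the constant $\frac{2}{1-2\delta(\Omega)}$ drops out exactly as in the scalar Donoho--Stark theorem, and the vector-valued nature of the problem costs nothing beyond replacing $|\cdot|$ by $\|\cdot\|_{\S^2}$ and using the pointwise triangle inequality in $\S^2$.
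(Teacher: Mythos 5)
Your argument is correct and delivers the stated constant, but it is organized differently from the paper's proof. The paper first uses the linearity of \(B_1\) to reduce to the case \(F=0\) (any minimizer of \(\|H-G\|_{L^1(\R^{2d},\S^2)}\) is \(F\) plus a minimizer of \(\|H-N\|_{L^1(\R^{2d},\S^2)}\)); a single application of the reverse triangle inequality on \(\Omega\) and \(\Omega^c\) then gives \(\|H-N\|_{L^1(\R^{2d},\S^2)}\ge \|N\|_{L^1(\R^{2d},\S^2)}-2\varepsilon+(1-2\delta(\Omega))\|H\|_{L^1(\R^{2d},\S^2)}\), from which both the error bound and the noiseless uniqueness drop out at once by comparing with the competitor \(H=0\). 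You instead work directly with the error \(E=F-\beta(G)\in B_1\) and bootstrap: combining the competitor bound \(\|G-\beta(G)\|_{L^1}\le\|N\|_{L^1}\) with \(\int_\Omega\|N\|_{\S^2}\le\int_\Omega\|G-\beta(G)\|_{\S^2}+\int_\Omega\|E\|_{\S^2}\) yields \(\int_{\Omega^c}\|E\|_{\S^2}\,dz\le 2\varepsilon+\int_\Omega\|E\|_{\S^2}\,dz\), and the localization inequality \(\int_\Omega\|E\|_{\S^2}\,dz\le\tfrac{\delta(\Omega)}{1-\delta(\Omega)}\int_{\Omega^c}\|E\|_{\S^2}\,dz\) closes the loop; I verified that this chain is valid and produces exactly \(\tfrac{2\varepsilon}{1-2\delta(\Omega)}\). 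The paper's route is shorter because the translation reduction removes \(F\) from the picture entirely; yours avoids that reduction at the cost of more bookkeeping. Two blemishes: the write-up retains two explicit false starts before the correct chain, and the separate uniqueness paragraph is both redundant and flawed --- the claimed identity \(\tfrac{\delta(\Omega)}{1-\delta(\Omega)}\int_{\Omega^c}\|F-H\|_{\S^2}\,dz=0\) is not justified by what precedes it. No repair is needed, though: your main estimate applies to an arbitrary minimizer, so setting \(\varepsilon=0\) already forces every minimizer to coincide with \(F\), which is the uniqueness statement.
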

\begin{proof}
    We will give the proof for  the sake of completeness, but note that it is done exactly as in the scalar valued case \cite{DonohoLogan,DonohoStark}.

    First, we assume that \(F=0\). 
     Since \(\delta(\Omega)<\frac{1}{2}\) we get using the reverse triangle inequality
     \begin{align*}
        \|H&-N\|_{L^1(\R^{2d},\S^2)} 
        = \left\|\chi_{\Omega}\cdot(H-N)\right\|_{L^1(\R^{2d},\S^2)}+\left\|\chi_{\Omega^c}\cdot(H-N)\right\|_{L^1(\R^{2d},\S^2)}
        \\
        &\geq \left\|\chi_{\Omega}\cdot N\right\|_{L^1(\R^{2d},\S^2)}-\left\|\chi_{\Omega^c}\cdot N\right\|_{L^1(\R^{2d},\S^2)}-\left\|\chi_{\Omega}\cdot H\right\|_{L^1(\R^{2d},\S^2)}+\left\|\chi_{\Omega^c}\cdot H\right\|_{L^1(\R^{2d},\S^2)}\\
        &\geq  \left\|  N\right\|_{L^1(\R^{2d},\S^2)}-2 \varepsilon+\left\|  H\right\|_{L^1(\R^{2d},\S^2)}(1-2\delta(\Omega)).
    \end{align*}
 If $N$ is supported on $\Omega$, i.e., $\varepsilon=0$, we immediately see that   \(H=0\) has to be the unique solution to the minimization problem.

 For $\varepsilon>0$ we note that for any optimizer $\beta(G)$ one trivially has $\|\beta(G)-N\|_{L^1(\R^{2d},\S^2)}\leq \|N\|_{L^1(\R^{2d},\S^2)}$ and therefore, after combining this inequality with the estimate above, we arrive at 
 $$
 \|\beta(G)\|_{L^1(\R^{2d},\S^2)}\leq \frac{2\varepsilon}{1-2\delta(\Omega)}.
 $$
 To prove the result in general, we note that \begin{align*}
       \min_{H\in B_1} \|G-H\|_{L^1(\R^{2d},\S^2)}&=   \min_{H\in B_1}\|F+N-H\|_{L^1(\R^{2d},\S^2)}=   \min_{H\in B_1}\|N-H\|_{L^1(\R^{2d},\S^2)},
    \end{align*}
    and any minimizer of the problem on the left-hand side can be written as a translation by $F$ of a minimizer on the right-hand side and vice versa.
\end{proof}

\bigskip Since $N$ is an arbitrary function in $L^{1}(\mathbb{R}^{2d},S^{2})$%
\ we can take, in Proposition \ref{absRecovery}, $F(z)=\mathfrak{V}_{\gamma
}\rho (z)$ and $N(z)=-\chi _{\Omega }\mathfrak{V}_{\gamma }\rho (z)$. This results in the operator version of Logan's
phenomenon mentioned in the introduction.
\bigskip\ 

\begin{corollary}
\label{Logan}Let  $\w \in \mathfrak{M}^{1}$ and $\Omega\subset\R^{2d}$ be such that  ${\Phi }_{\Omega
,\gamma }^{(1)}(\rho )<\frac{1}{2}$ for every $\r\in\A^1$. Then:
\begin{equation*}
\rho =\argmin_{\sigma \in \mathfrak{M}^{1}}\left\Vert \V_\w\sigma \right\Vert
_{L^{1}(\R^{2d},\S ^{2})},\qquad\text{subject to }\mathfrak{V}%
_{\gamma }\sigma |_{\Omega ^{c}}=\mathfrak{V}_{\gamma }\rho |_{\Omega ^{c}}%
\text{.}
\end{equation*}
\end{corollary}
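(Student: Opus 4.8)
The plan is to obtain Corollary~\ref{Logan} as a direct application of the Bochner-valued Donoho-Stark result, Proposition~\ref{absRecovery}, following the hint in the paragraph immediately preceding the statement. First I would fix the ambient space by setting $B_1:=\V_\w(\mathfrak{M}^1)\subseteq L^1(\R^{2d},\S^2)$. By Proposition~\ref{prop:eqNorm}, $\sigma\mapsto\|\V_\w\sigma\|_{L^1(\R^{2d},\S^2)}$ is an equivalent norm on $\mathfrak{M}^1$, so $\V_\w\colon\mathfrak{M}^1\to L^1(\R^{2d},\S^2)$ is injective and bounded below; in particular it has closed range, so $B_1$ is a genuine Banach space, which is what Proposition~\ref{absRecovery} requires. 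With this choice the constant $\delta(\Omega)$ there becomes
\[
\delta(\Omega)=\sup_{F\in B_1}\frac{\int_\Omega\|F(z)\|_{\S^2}\,dz}{\int_{\R^{2d}}\|F(z)\|_{\S^2}\,dz}=\sup_{\sigma\in\mathfrak{M}^1}\Phi_{\Omega,\w}^{(1)}(\sigma),
\]
which the standing hypothesis forces to be strictly below $1/2$.

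Next I would feed Proposition~\ref{absRecovery} the splitting $G=F+N$ with $F:=\V_\w\rho\in B_1$ and $N:=-\chi_\Omega\cdot\V_\w\rho$, so that $\supp(N)\subseteq\Omega$, one may take $\varepsilon=0$, and $G=\chi_{\Omega^c}\cdot\V_\w\rho$. The proposition then gives that $\V_\w\rho$ is the \emph{unique} solution of $\argmin_{H\in B_1}\|H-\chi_{\Omega^c}\cdot\V_\w\rho\|_{L^1(\R^{2d},\S^2)}$; since $\V_\w$ is a bijection from $\mathfrak{M}^1$ onto $B_1$, this is the same as saying that $\rho$ is the unique minimizer over $\sigma\in\mathfrak{M}^1$ of $\|\V_\w\sigma-\chi_{\Omega^c}\cdot\V_\w\rho\|_{L^1(\R^{2d},\S^2)}$.

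It then remains to match this unconstrained program with the constrained one in the statement. For any $\sigma\in\mathfrak{M}^1$, splitting the norm over $\Omega$ and $\Omega^c$ gives
\[
\big\|\V_\w\sigma-\chi_{\Omega^c}\cdot\V_\w\rho\big\|_{L^1}=\int_\Omega\|\V_\w\sigma(z)\|_{\S^2}\,dz+\int_{\Omega^c}\|\V_\w\sigma(z)-\V_\w\rho(z)\|_{\S^2}\,dz.
\]
On the feasible set $\mathcal{C}:=\{\sigma\in\mathfrak{M}^1:\ \V_\w\sigma|_{\Omega^c}=\V_\w\rho|_{\Omega^c}\}$ the second integral vanishes, so there the objective equals $\|\chi_\Omega\cdot\V_\w\sigma\|_{L^1}$; moreover for $\sigma\in\mathcal{C}$ one has $\|\V_\w\sigma\|_{L^1}=\|\chi_\Omega\cdot\V_\w\sigma\|_{L^1}+\|\chi_{\Omega^c}\cdot\V_\w\rho\|_{L^1}$, which differs from it only by the $\sigma$-independent constant $\|\chi_{\Omega^c}\cdot\V_\w\rho\|_{L^1}$. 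Since $\rho\in\mathcal{C}$ and $\rho$ is the unique global minimizer over $\mathfrak{M}^1\supseteq\mathcal{C}$ of the quantity on the left, it is a fortiori the unique minimizer over $\mathcal{C}$, hence also the unique minimizer of $\|\V_\w\sigma\|_{L^1}$ subject to $\V_\w\sigma|_{\Omega^c}=\V_\w\rho|_{\Omega^c}$, which is exactly the assertion.

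I do not expect a genuine obstacle here: the analytic content is entirely carried by Proposition~\ref{absRecovery} (the Donoho-Stark argument) and by Proposition~\ref{prop:eqNorm} (which makes $B_1$ a closed subspace of $L^1(\R^{2d},\S^2)$), and what remains is the bookkeeping above. The one point deserving a word of care is the strictness $\delta(\Omega)<1/2$: the hypothesis is phrased pointwise, as $\Phi_{\Omega,\w}^{(1)}(\rho)<1/2$ for every $\rho$, which by itself yields only $\sup_\rho\Phi_{\Omega,\w}^{(1)}(\rho)\le 1/2$, so it should be read (as in the introduction) as the \emph{uniform} bound $\sup_{\rho\in\mathfrak{M}^1}\Phi_{\Omega,\w}^{(1)}(\rho)<1/2$, which is exactly the form in which the large sieve inequalities of Sections~\ref{sec:large-sieve} and~\ref{sec:estimating-theta} are delivered.
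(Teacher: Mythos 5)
Your proof is correct and follows exactly the paper's route: the paper's entire argument is the remark preceding the corollary, namely to apply Proposition~\ref{absRecovery} with $F=\V_\w\rho$ and $N=-\chi_\Omega\cdot\V_\w\rho$, and you carry out precisely this together with the bookkeeping (identifying $B_1=\V_\w(\A^1)$ as a Banach space via Proposition~\ref{prop:eqNorm} and translating the unconstrained $L^1$ fit into the constrained program) that the paper leaves implicit. Your caveat about reading the hypothesis as the uniform bound $\sup_{\rho\in\A^1}\Phi^{(1)}_{\Omega,\w}(\rho)<1/2$ is well taken and matches the paper's intent, though one can also check that the pointwise strict inequality already forces strictness in the key display of Proposition~\ref{absRecovery} for each fixed $H\neq 0$, so exact recovery holds either way.
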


In the second case we again observe a  version of our signal that is corrupted by noise, but in addition parts of the observation are lost. Compared to the first recovery scenario we now  assume that the (operator-valued) perturbation is small in the  \(L^1(\R^{2d},\S^2)\) sense. Thus our observation is \begin{align}\label{eq:recov-2}
    G(z)=\begin{cases}
        \V_\w\r(z)+N(z), &z\in\Omega,\cr 
        0 \, ,&z\notin\Omega,
    \end{cases}
\end{align}
with \(\w,\r\) and \(\Omega\) as before.
In general, we cannot hope to recover the signal perfectly, but the error that we make is proportional to the $L^1(\R^{2d},\S^2)$-norm of the noise. We  rely on another generalized Donoho-Stark result.
\begin{prop}\label{prop:min-2}
    Let 
    \(B_1\subset L^1(\R^{2d},\S^2)\) be a Banach space. For \(\Omega\subset \R^{2d}\) we define \(\delta(\Omega)\) as before. Assume \(\delta(\Omega)<1\) and  consider \begin{align*}
        G(z)=\begin{cases}
            (F+N)(z), &z\notin \Omega\cr
            0\, ,&z\in \Omega
        \end{cases},
    \end{align*}
    where \(F\in B_1\). 
    If \(\beta(G)\) is any solution of the minimization problem
    \begin{align}\label{eq:min-loss}
     \argmin_{H\in B_1} \|\chi_{\Omega^c}\cdot(G-H)\|_{L^1(\R^{2d},\S^2)},
    \end{align}
    then \begin{align*}
        \|F-\beta(G)\|_{L^1(\R^{2d},\S^2)}\leq \frac{2}{1-\delta(\Omega)}\|N\|_{L^1(\R^{2d},\S^2)}.
    \end{align*}
\end{prop}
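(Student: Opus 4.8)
The plan is to mimic the Donoho--Stark argument from the proof of Proposition~\ref{absRecovery}, now exploiting the optimality of $\beta(G)$ tested against the \emph{feasible} candidate $H=F\in B_1$, together with the (reverse) triangle inequality on $\Omega$ and $\Omega^c$ and the definition of $\delta(\Omega)$. The three ingredients are thus: the minimization property of $\beta(G)$; splitting the $L^1(\R^{2d},\S^2)$-norm over $\Omega$ and $\Omega^c$; and the bound $\delta(\Omega)<1$, applied to $F-\beta(G)$, which lies in $B_1$ because $B_1$ is a linear space.

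First I would observe that, since $G$ vanishes on $\Omega$ and coincides with $F+N$ on $\Omega^c$, for every $H\in B_1$ the objective in \eqref{eq:min-loss} equals $\|\chi_{\Omega^c}\cdot(F+N-H)\|_{L^1(\R^{2d},\S^2)}$. Choosing $H=F$ shows that its value at the minimizer $\beta(G)$ is at most $\|\chi_{\Omega^c}\cdot N\|_{L^1(\R^{2d},\S^2)}\le\|N\|_{L^1(\R^{2d},\S^2)}$. Writing $\chi_{\Omega^c}\cdot(F-\beta(G))=\chi_{\Omega^c}\cdot(F+N-\beta(G))-\chi_{\Omega^c}\cdot N$ and applying the triangle inequality then gives
\[
\|\chi_{\Omega^c}\cdot(F-\beta(G))\|_{L^1(\R^{2d},\S^2)}\le 2\|N\|_{L^1(\R^{2d},\S^2)}.
\]
Next, since $F-\beta(G)\in B_1$, the definition of $\delta(\Omega)$ bounds the mass of $F-\beta(G)$ on $\Omega$ by $\delta(\Omega)\|F-\beta(G)\|_{L^1(\R^{2d},\S^2)}$. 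Splitting $\|F-\beta(G)\|_{L^1(\R^{2d},\S^2)}$ over $\Omega$ and $\Omega^c$ and inserting the two previous estimates yields
\[
\|F-\beta(G)\|_{L^1(\R^{2d},\S^2)}\le \delta(\Omega)\,\|F-\beta(G)\|_{L^1(\R^{2d},\S^2)}+2\|N\|_{L^1(\R^{2d},\S^2)}.
\]
As $F,\beta(G)\in B_1\subset L^1(\R^{2d},\S^2)$ the left-hand side is finite, so the first term on the right may be absorbed; dividing by $1-\delta(\Omega)>0$ gives the claim.

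I expect there to be essentially no hard step: the argument is a routine adaptation of the function-valued case \cite{DonohoLogan,DonohoStark}. The only subtleties worth flagging are that the vector-space structure of $B_1$ is what permits applying the $\delta(\Omega)$ bound to $F-\beta(G)$, and that here only one invocation of that bound is needed, which is why the weaker hypothesis $\delta(\Omega)<1$ suffices, in contrast to $\delta(\Omega)<1/2$ in Proposition~\ref{absRecovery} where the noise itself lives on $\Omega$. As in Corollary~\ref{Logan}, one would finally take $F=\V_\w\r$ and bound $\delta(\Omega)$ by $\sup_{\r\in\A^1}\Phi_{\Omega,\w}^{(1)}(\r)$ via Proposition~\ref{QuantumSieve}, turning the above into a lossy recovery guarantee for the operator STFT whenever the quantum large sieve forces that supremum below $1$.
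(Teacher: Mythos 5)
Your proof is correct and follows essentially the same route as the paper: split $\|F-\beta(G)\|_{L^1(\R^{2d},\S^2)}$ over $\Omega$ and $\Omega^c$, bound the $\Omega$-part by $\delta(\Omega)\|F-\beta(G)\|_{L^1(\R^{2d},\S^2)}$, control the $\Omega^c$-part by $2\|\chi_{\Omega^c}\cdot N\|_{L^1(\R^{2d},\S^2)}$ via the triangle inequality and the optimality of $\beta(G)$ tested against $H=F$, then absorb. Your explicit remarks on finiteness of the left-hand side and on why $\delta(\Omega)<1$ (rather than $<1/2$) suffices are accurate, and the concluding paragraph matches the paper's Corollary~\ref{Recovery}.
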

\begin{proof} Let $\beta(G)$ be a solution to \eqref{eq:min-loss},
then
\begin{align*}
    \|F&-\beta(G)\|_{L^1(\R^{2d},\S^2)}=\|\chi_\Omega\cdot(F-\beta(G))\|_{L^1(\R^{2d},\S^2)}+\|\chi_{\Omega^c}\cdot(F-\beta(G))\|_{L^1(\R^{2d},\S^2)}
    \\&\leq \delta(\Omega)\| F-\beta(G)\|_{L^1(\R^{2d},\S^2)}+\|\chi_{\Omega^c}\cdot(F-G)\|_{L^1(\R^{2d},\S^2)} +\|\chi_{\Omega^c}\cdot(G-\beta(G))\|_{L^1(\R^{2d},\S^2)} 
    \\
    &\leq \delta(\Omega)\| F-\beta(G)\|_{L^1(\R^{2d},\S^2)}+2\|\chi_{\Omega^c}\cdot N\|_{L^1(\R^{2d},\S^2)}  
\end{align*}
which concludes the proof.
\end{proof}

Combining this with our large sieve estimates we get conditions for recovery from noisy operator STFTs.
\begin{corollary}\label{Recovery}
    Let $B_1=\V_\w(\M^1)$, \(G=\V_\w\r+N\) where \(\w,\r\in \A^1\) and \(N\in L^1(\R^{2d},\S^2)\). If 
        $$
\alpha(\Omega):=\theta_\mathcal{K}\cdot\sup_{w\in\R^{2d}}\int_{\Omega}\|\mathcal{K}(z,w)\|_{\mathrm{op}}\, dz<\frac{1}{2},
    $$
\(\|\chi_{\Omega^c}\cdot N\|_{L^1(\R^{2d},\S^2)}\leq\varepsilon\), and $\beta(G)$ is a solution to the \(L^1\)-minimization problem \eqref{eq:min},  then
    $$
    {\big\|\V_\w \r-\beta(G)\big\|_{L^1(\R^{2d},\S^2)}\leq \frac{2\varepsilon  }{1-2\alpha(\Omega)}.}
    $$
    If $\alpha(\Omega)<1$, and $\beta(G)$ is any solution of the $L^1$-minimization problem \eqref{eq:min-loss}, then 
    \begin{align*}
        {\|\V_\w \r- \beta(G)\|_{L^1(\R^{2d},\S^2)}\leq \frac{2}{1-\alpha(\Omega)}\|N\|_{L^1(\R^{2d},\S^2)}.}
    \end{align*}
\end{corollary}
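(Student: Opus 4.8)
The plan is to obtain both inequalities as direct consequences of the abstract Donoho--Stark results Proposition~\ref{absRecovery} and Proposition~\ref{prop:min-2}, applied with the concrete choice $B_1=\V_\w(\A^1)$, once the concentration quantity $\delta(\Omega)$ appearing in those propositions is identified with a quantity bounded by $\alpha(\Omega)$ via the large sieve. So the whole argument is an assembly of already-established pieces.

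First I would record that $B_1=\V_\w(\A^1)$, equipped with the $L^1(\R^{2d},\S^2)$-norm, is a Banach space contained in $L^1(\R^{2d},\S^2)$: by Proposition~\ref{prop:eqNorm} one has $\|\V_\w\r\|_{L^1(\R^{2d},\S^2)}\asymp\|\r\|_{\A^1}$, and $\V_\w$ is injective on $\A^1$ by the inversion formula in Proposition~\ref{prop:op-stft}(3) together with $\|\w\|_{\S^2}=1$; since $\A^1$ is complete, so is $\V_\w(\A^1)$ with the induced norm, and the hypotheses of Propositions~\ref{absRecovery} and \ref{prop:min-2} are met. Next I would compute $\delta(\Omega)$ for this $B_1$: by definition it equals $\sup_{\r\in\A^1}\Phi_{\Omega,\w}^{(1)}(\r)$, and applying Proposition~\ref{prop:abstract-LS} (the $p=1$ instance of Proposition~\ref{QuantumSieve}, with $\mathcal{K}$ as required there) to each $\r\in\A^1$ and then taking the supremum gives $\delta(\Omega)\le\theta_\mathcal{K}\cdot\sup_{w\in\R^{2d}}\int_\Omega\|\mathcal{K}(z,w)\|_{\mathrm{op}}\,dz=\alpha(\Omega)$.

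For the first assertion, the hypothesis $\alpha(\Omega)<\tfrac12$ forces $\delta(\Omega)<\tfrac12$, so Proposition~\ref{absRecovery} applies with $F=\V_\w\r\in B_1$ and the given noise $N$ satisfying $\|\chi_{\Omega^c}\cdot N\|_{L^1(\R^{2d},\S^2)}\le\varepsilon$; it yields $\|\V_\w\r-\beta(G)\|_{L^1(\R^{2d},\S^2)}\le\tfrac{2\varepsilon}{1-2\delta(\Omega)}$, and since $t\mapsto\tfrac{2\varepsilon}{1-2t}$ is nondecreasing on $[0,\tfrac12)$ we may enlarge $\delta(\Omega)$ to $\alpha(\Omega)$ to get the stated bound. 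For the second assertion, I would first note that the minimization problem \eqref{eq:min-loss} depends on $G$ only through $\chi_{\Omega^c}\cdot G=\chi_{\Omega^c}\cdot(\V_\w\r+N)$, so it coincides with the problem in Proposition~\ref{prop:min-2} for $F=\V_\w\r$; the hypothesis $\alpha(\Omega)<1$ gives $\delta(\Omega)<1$, whence $\|\V_\w\r-\beta(G)\|_{L^1(\R^{2d},\S^2)}\le\tfrac{2}{1-\delta(\Omega)}\|N\|_{L^1(\R^{2d},\S^2)}$, and monotonicity of $t\mapsto\tfrac{2}{1-t}$ lets us replace $\delta(\Omega)$ by $\alpha(\Omega)$.

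Since everything is a direct recombination of the abstract recovery lemmas with the $p=1$ large sieve estimate, I do not expect a serious obstacle; the only points needing a little care are checking that $B_1=\V_\w(\A^1)$ is genuinely a Banach space sitting inside $L^1(\R^{2d},\S^2)$, so that Propositions~\ref{absRecovery} and~\ref{prop:min-2} apply verbatim, and making sure the bound $\delta(\Omega)\le\alpha(\Omega)$ is taken uniformly over $\A^1$ — which is exactly what passing to the supremum in Proposition~\ref{prop:abstract-LS} provides.
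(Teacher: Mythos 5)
Your proposal is correct and follows essentially the same route as the paper: the authors likewise obtain the corollary by instantiating Propositions~\ref{absRecovery} and~\ref{prop:min-2} with $B_1=\V_\w(\A^1)$ and using the large sieve (Proposition~\ref{prop:abstract-LS}) to bound $\delta(\Omega)\le\alpha(\Omega)$. Your added checks --- that $\V_\w(\A^1)$ is a Banach subspace of $L^1(\R^{2d},\S^2)$ via the norm equivalence and injectivity from the inversion formula, and the monotonicity step allowing $\delta(\Omega)$ to be replaced by $\alpha(\Omega)$ --- are details the paper leaves implicit but are entirely in the same spirit.
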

\begin{proof}
    This is a restatement of Propositions~\ref{absRecovery} and \ref{prop:min-2} for the case \(B_1=\V_\w(\A^1)\), plus an application of the operator STFT inversion formula. The large sieve principle guarantees that \(\delta(\Omega)\leq \theta_\mathcal{K}\cdot\sup_{w}\int_{\Omega}\|\mathcal{K}(z,w)\|_{\mathrm{op}}\, dz\).
\end{proof}
 
 {
\begin{remark}
    Since \(\V_\w^\ast\) is bounded from \(L^1(\R^{2d},\S^2)\) to \(\M^1\), it follows that Corollary \ref{Recovery} gives sufficient conditions for recovery of \(\rho\), not just \(\V_\w\rho.\)
\end{remark}
}

We note here that in Corollary~\ref{Recovery} we replaced the condition on \(\delta(\Omega)\) with a condition on \(\alpha(\Omega)=\theta_{\mathcal{K}}\cdot \sup_{w}\int_{\Omega}\|\mathcal{K}(z,w)\|_{\mathrm{op}}\, dz\) which gives an easier to verify, but less optimal, stability estimate. We extensively discussed the question of how to further bound $\alpha(\Omega)$ in
Section~\ref{sec:estimating-theta}.

As a last remark, we make a brief mention of the implications of the results to the problem of quantum state tomography \cite{Gross,schreiber2025tomography}, which was one of our inspirations. Our  results provide theoretical guaranties for the recovery of a state represented by a density operator, from incomplete information in the phase space. The low rank condition of \cite{Gross,GrossIEEE} is replaced by the measure-sparsity discussed in the introduction. But it should be stressed that, at this point, potential applications are  stylized, due to the continuity of the phase space and the errors resulting from the infinite-dimensional nature of the spaces involved (nevertheless, some problems in quantum science require infinite dimensions and an appropriate formalism has recently been developed \cite{yamasaki2025entanglement}).  Further research would be required to make a decisive statement regarding the feasibility of our approach in real physical settings.

 \section*{Acknowledgements}
The authors thank Franz Luef for several clarifications regarding the
historical development of the new and old concepts of quantum harmonic
analysis used in this paper,  which have been incorporated in this version. This research was funded in part by the Austrian Science Fund (FWF)  through the projects 10.55776/PAT8205923 (L.D.A.) and 10.55776/PAT1384824 (M.S.).  E.S.  acknowledges the partial funding received from the foundation "Norges tekniske høgskoles fond".
For open
access purposes, the authors have applied a CC BY public copyright license to any author-accepted manuscript
version arising from this submission.

\printbibliography
\end{document}